\renewcommand{\subjclassname}{%
  \textup{1991} Mathematics Subject Classification}
\let\csname subjclassname@1991\endcsname \subjclassname
\newtheorem{theorem}{Theorem} [section]
\newtheorem{lemma}[theorem]{Lemma}
\newtheorem{proposition}[theorem]{Proposition}
\newtheorem{remark}[theorem]{Remark}
\newtheorem{definition}[theorem]{Definition}
\newenvironment{assumption}[1]
  {\innerassumption}
  {\endinnerassumption}
\newcommand{\noi}{\noindent}
\newcommand{\Z}{\mathbb{Z}}
\newcommand{\R}{\mathbb{R}}
\newcommand{\C}{\mathbb{C}}
\newcommand{\T}{\mathbb{T}}
\newcommand{\TT}{\mathcal{T}}
\let\Re=\undefined\DeclareMathOperator*{\Re}{Re}
\let\P= \undefined
\newcommand{\P}{\mathbf{P}}
\renewcommand{\L}{\mathcal{L}}
\newcommand{\NN}{\mathcal{N}}
\newcommand{\Nb}{\mathbf{N}}
\newcommand{\Fb}{\mathbf{F}}
\newcommand{\Eb}{\mathbf{E}}
\newcommand{\X}{\mathbf{X}}
\newcommand{\Y}{\mathbf{Y}}
\newcommand{\RR}{\mathcal{R}}
\newcommand{\K}{\mathbb{K}}
\newcommand{\F}{\mathcal{F}}
\newcommand{\ind}{\mathbf{1}}
\newcommand{\eps}{\varepsilon}
\newcommand{\ft}{\widehat}
\newcommand{\wt}{\widetilde}
\newcommand{\cj}{\overline}
\newcommand{\dx}{\partial_x}
\newcommand{\dt}{\partial_t}
\newcommand{\les}{\lesssim}
\newcommand{\jb}[1]
{\langle #1 \rangle}
\renewcommand{\ind}{\mathbf 1}
\renewcommand{\S}{\mathcal{S}}
\newcommand{\ZZ}{\mathcal{Z}}
\newcommand{\M}{\mathcal{M}}
\newcommand{\N}{\mathbb{N}}
\renewcommand{\SS}{\mathbb{S}}
\newtheorem*{ackno}{Acknowledgements}
\numberwithin{equation}{section}
\numberwithin{theorem}{section}
\begin{document}
\baselineskip = 15pt

\title[Strongly non-resonant modulated dispersive PDEs]
{Regularization by noise for some strongly non-resonant modulated dispersive PDEs}

\author{Tristan Robert}

\address{\small{Université de Lorraine, CNRS, IECL, F-54000 Nancy, France}}

\email{tristan.robert@univ-lorraine.fr}

\subjclass[2020]{35Q53,35Q55,60L50}

\keywords{Regularization by noise, nonlinear dispersive PDEs}

\begin{abstract}
In this work, we pursue our investigations on the Cauchy problem for a class of dispersive PDEs where a rough time coefficient is present in front of the dispersion. We show that if the PDE satisfies a strong non-resonance condition (Theorem~\ref{THM:LWP-NR}), eventually up to a completely resonant term (Theorem~\ref{THM:LWP-R}), then the modulated PDE is well-posed at any regularity index provided that the noise term in front of the dispersion is irregular enough. This extends earlier pioneering work of Chouk-Gubinelli and Chouk-Gubinelli-Li-Li-Oh to a more general context. We quantify the irregularity of the noise required to reach a given regularity index in terms of the regularity of its occupation measure in the sense of Catellier-Gubinelli. As examples, we discuss the cases of dispersive perturbations of the Burger's equation, including the dispersion-generalized Korteweg-de Vries and Benjamin-Ono equations, the intermediate long wave equation, the Wick-ordered modified dispersion-generalized Korteweg-de Vries equation, and the fifth-order Korteweg-de Vries equation. We also treat the completely non-resonant nonlinear Schr\"odinger equation and the Wick-ordered fractional cubic nonlinear Schr\"odinger equation, all with periodic boundary conditions.
\end{abstract}

%\date{\today}
%%
%
\maketitle

\vspace{-5mm}

\tableofcontents

\section{Introduction}
In this paper, we continue the investigations started in \cite{DBD,DT,DR,CG1,CG2} and in our previous work \cite{ModulatedNLS} on the well-posedness of abstract dispersive PDEs with modulated dispersion:
\begin{align}\label{EQ}
\begin{cases}{\displaystyle \dt u+\frac{d W}{dt}\L u+ \NN(u)=0,}\\
u_{|t=0}=u_0
\end{cases}
~~u:(t,x)\in\R\times \M\to \K,
\end{align}
where $\M$ is some boundaryless manifold to be specified, $\K\in\{\R;\C\}$, $\L$ is a skew-adjoint operator, $\NN$ is a nonlinear function of $u$ possibly depending also on its derivatives, and $W_t:\R\to\R$ is a given continuous but not necessarily differentiable function. We are thus interested in the effect of the rough modulation $W_t$ on the behaviour of the PDE \eqref{EQ} compared to that of the ``deterministic'' PDE
\begin{align}\label{EQ:deterministic}
\begin{cases}
\dt u +\L u +\NN(u)=0,\\
u(t=0)=u_0.
\end{cases}
\end{align}

The model \eqref{EQ} has been particularly studied when the original equation \eqref{EQ:deterministic} corresponds to the nonlinear Schr\"odinger (NLS) equation, namely $\K=\C$, $\L=i\Delta$, and $\NN(u)=|u|^{p-2}u$ for some $p>2$. We refer to the introduction in \cite{ModulatedNLS} for more physical background and references regarding the analysis of the modulated NLS equation \eqref{EQ}.

\subsection{Regularization by noise}
In this follow-up work after \cite{ModulatedNLS}, we are particularly interested in the so-called \emph{regularization by noise} phenomenon that one can observe for \eqref{EQ} compared to \eqref{EQ:deterministic}. Indeed, building on the breakthrough work of Chouk and Gubinelli in \cite{CG2}, our main results in Theorem~\ref{THM:LWP-NR} and~\ref{THM:LWP-R} below can be informally stated as follows.
\begin{theorem}[Informal version]\label{THM:informal}
If the PDE \eqref{EQ:deterministic} satisfies a strong non-resonant condition, then the modulated equation \eqref{EQ} is well-posed in $H^s(\M)$ for \textbf{\emph{any}} $s\in\R$ provided that $W_t$ is irregular enough depending on $s$.
\end{theorem}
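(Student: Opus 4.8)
The plan is to eliminate the rough dispersion by passing to a \emph{modulated interaction picture}. Since $\L$ is skew-adjoint, $e^{W_t\L}$ is, for each fixed $t$, a unitary operator on $H^s(\M)$, so setting $v(t)=e^{W_t\L}u(t)$ is an invertible isometry at each time. Differentiating and using that $e^{W_t\L}$ commutes with $\L$, the two $\dot W\L$ contributions cancel and one is left with
\begin{equation*}
\dt v = -\,e^{W_t\L}\,\NN\!\big(e^{-W_t\L}v\big)=:-\Phi_t(v),
\end{equation*}
an equation in which the rough derivative $\dot W$ no longer appears explicitly: the irregularity of $W$ has been transferred into the oscillatory phases inside the twisted nonlinearity $\Phi_t$. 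Passing to the Duhamel formulation
\begin{equation*}
v(t)=u_0-\int_0^t \Phi_\tau(v(\tau))\,d\tau,
\end{equation*}
the whole problem reduces to giving a meaning to, and estimating, the nonlinear time integral on the right.

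Next I would expose the oscillatory structure of $\Phi_t$ on the Fourier side. Writing $\widehat{\L f}(n)=i\omega(n)\widehat f(n)$ for the (real) symbol $\omega$ of the skew-adjoint operator $\L$, a multilinear model nonlinearity $\NN$ of degree $k$ produces, in the $n$-th frequency of $\Phi_\tau(v)$, a sum over interactions $n=n_1+\dots+n_k$ carrying the phase $e^{\,iW_\tau\,\Omega(n,n_1,\dots,n_k)}$, where $\Omega=\omega(n)-\sum_{j}\pm\omega(n_j)$ is the \emph{resonance function}. The \emph{strong non-resonance condition} of Theorem~\ref{THM:LWP-NR} is precisely a quantitative lower bound on $|\Omega|$ in terms of the frequencies involved, and it is this that drives the smoothing: upon integrating in $\tau$, the fast phase $e^{iW_\tau\Omega}$ is controlled by the \emph{occupation measure} of $W$ in the sense of Catellier--Gubinelli. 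Heuristically, $\int_0^t e^{iW_\tau\Omega}g(\tau)\,d\tau$ is governed by the Fourier transform of the occupation measure evaluated at $\Omega$, and its decay rate — equivalently, the Hölder regularity of the local time of $W$ — yields a gain of the form $|\Omega|^{-\theta}$ with $\theta>0$ increasing as $W$ becomes more irregular. I would make this rigorous as a genuine Young-type rough integral, constructed and bounded by a sewing argument based on the $(\rho,\gamma)$-irregularity hypothesis on $W$.

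With the averaging estimate in hand, the core of the argument is a \emph{multilinear smoothing estimate} of the schematic form
\begin{equation*}
\Big\|\int_0^t \Phi_\tau(v)\,d\tau\Big\|_{C([0,T];H^s)}\les T^{\,\eps}\,\|v\|_{C([0,T];H^s)}^{\,k},
\end{equation*}
obtained by combining, frequency by frequency, the non-resonance lower bound on $|\Omega|$ (a power of the largest frequency) with the occupation-measure gain $|\Omega|^{-\theta}$, and then summing over all interactions. The point is that the smoothing exponent $\theta$ grows with the irregularity of $W$, so for any prescribed $s\in\R$ one may choose $W$ irregular enough that $\theta$ beats the fixed derivative loss built into $\NN$, and the above estimate closes with a positive power $T^{\eps}$. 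A standard Banach fixed-point argument in $C([0,T];H^s(\M))$ then yields local well-posedness, and undoing the gauge transform returns a solution of \eqref{EQ}; since the construction is entirely pathwise, the statement is deterministic given the assumed regularity of the occupation measure of $W$.

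The step I expect to be the main obstacle is exactly this multilinear smoothing estimate. Two difficulties must be reconciled at once: the oscillatory time integral has to be defined and bounded at the level of an irregular $W$ — which is where the sewing machinery and the sharp quantification of the occupation-measure regularity enter — and the resulting frequency-localized gain must be summable against the full derivative loss of $\NN$ uniformly over \textbf{all} frequency interactions. Controlling the nearly resonant interactions, where $\Omega$ is small and little averaging gain is available, is the delicate point; under the strong non-resonance hypothesis of Theorem~\ref{THM:LWP-NR} these are harmless, whereas the presence of a completely resonant term is precisely what forces the separate treatment of Theorem~\ref{THM:LWP-R}.
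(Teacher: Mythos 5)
Your proposal identifies the correct mechanism and is, at the level of an informal statement, a faithful blueprint: pass to the interaction representation $v=e^{W_t\L}u$, expose the resonance function \eqref{resonance} in the twisted nonlinearity, use the strong lower bound \eqref{NR} together with the decay of the Fourier transform of the local time (assumption \ref{A0} via the occupation time formula \eqref{occupation}) to gain a factor $\jb{\Phi_{\L,\NN}}^{-\rho}$, sum over frequency interactions under a largeness condition on $\rho$ (this is exactly \ref{Arho}), and close a fixed point. Where you genuinely diverge from the paper is in how the oscillatory time integral is tamed. You propose the nonlinear Young integral / sewing route in H\"older-type spaces, which is the Chouk--Gubinelli and Chouk--Gubinelli--Li--Li--Oh strategy; the paper instead works in adapted $U^2/V^2$ spaces $\X^s_T$, exploits their atomic structure to reduce the multilinear estimate to step functions (Lemma~\ref{LEM:Multi}), and uses the $DU^2$--$V^2$ duality of Proposition~\ref{PROP:duality} in place of the sewing lemma. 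The paper explicitly remarks that for the semilinear regime of Theorem~\ref{THM:LWP-NR} the two approaches are essentially equivalent; what the $U^2/V^2$ route buys is the endpoint case $\gamma=\frac12$ of the Young theory and, more importantly, compatibility with the frequency-dependent short-time localization needed for the completely resonant perturbations of Theorem~\ref{THM:LWP-R}, where a pure sewing argument in $C^\gamma_tH^s$ would be awkward. One caveat on your write-up: the contraction cannot be run in plain $C([0,T];H^s)$ as your schematic estimate suggests. The Young/sewing construction of $\int_0^t\Phi_\tau(v)\,d\tau$ requires H\"older-in-time control of the increments of $v$ (the space $D^W(H^s)$ of Chouk--Gubinelli, or in the paper's language $\X^s_T$), and the fixed point must be closed in that stronger topology so that the output of the Duhamel map again has the time regularity needed to feed back into the multilinear estimate. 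This is a repairable omission, not a wrong idea, but it is precisely the role played by the auxiliary solution space in both your intended route and the paper's.
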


See below for the proper definition of strong non-resonance. Note that for nonlinear PDEs, one can in general compute a critical exponent $s_c$ dictated by scaling considerations, for which one expects ill-posedness for initial data of regularity $s<s_c$. Thus, in this respect we can see that adding the rough modulation term in front of the dispersion has a strong regularizing\footnote{Actually, the term $\frac{dW_t}{dt}$ in \eqref{EQ} changes the scaling of the equation compared to \eqref{EQ:deterministic}, which explains why the well-posedness can be obtained in a wider range when $W_t$ is rough, compared to the case $W_t=t$. However, this is in contrast with the results of \cite{ModulatedNLS}, where we showed that at the level of linear space-time estimates, one cannot hope to benefit from the change of scaling due to the noise to get improvement on Strichartz type estimates.}  effect on the dynamics of \eqref{EQ:deterministic}.

The regularizing effect of adding a noise in a singular dynamics has been observed for quite a while now, and for various models. In the case of stochastic perturbations of differential equations, there is by now a very large body of research regarding the regularization effect of the noise on the dynamics of ODEs generated by rough vector fields. In this context, if the vector field is too rough, one loses uniqueness of the solution without the noise, but the latter is restored in presence of the noisy perturbation. Such a regularization effect on dynamics of ODEs was observed in pioneering works of Zvonkin \cite{Zvonkin}, Veretennikov \cite{Veretennikov}, and Krylov-R\"ockner \cite{KR}, to name but a few. From then, a more systematic study of such a regularization effect for SDEs lead to an ever growing literature. One direction related to the present work is in particular investigated by Davie \cite{Davie} and Cattelier-Gubinelli \cite{CaG}: while earlier works relied more or less heavily on the stochastic properties of the noisy perturbation, the work of Davie showed that the regularization effect can actually be observed path-wise, i.e. for a.e. realization of the noise. Catellier and Gubinelli went one step further in this direction and proved that this phenomenon is actually entirely deterministic, and relies on the irregularity properties of the noise, observed through the regularity properties of its occupation measure; see Definition~\ref{DEF:occup} below. This approach of \emph{noiseless} regularization by noise was further studied by Galeati-Gubinelli \cite{GaGu1,GaGu2}. See also \cite{GaGe} for a sharp result (up to endpoint) on the regularity of the drift allowed in the case of a fractional Brownian perturbation, and references therein for a more comprehensive introduction to the SDE case.

Moving to the PDE setting, using the relation between ODEs and transport equations through the method of characteristic, the important work of Flandoli, Gubinelli, and Priola \cite{FGP} showed that such a regularization effect can also be observed for PDEs subject to stochastic perturbations; in this case, linear transport equations with a rough drift. We refer to the monograph \cite{Flandoli} for a more detailed account on these results. In recent years, more results appeared on regularization by noise for various kinds of PDEs: stochastic scalar conservation laws \cite{ChoukGess,GessMaurelli}, stochastic Hamilton-Jacobi equations \cite{GGLS}, stochastic parabolic equations \cite{GP,BM}, stochastic wave equation \cite{BHR}, among other. See the review \cite{GessReview}, the introduction of \cite{GaleatiThesis}, and references therein for a more detailed account on the subject.

Regarding nonlinear \emph{dispersive} PDEs, few results studied the regularizing effect of the noise in the equation. It was shown for various models that non conservative noise can improve the long-time behavior of the PDE such as global well-posedness and scattering \cite{BRZ,HRZ,HRSZ1,HRSZ2}. The case of noisy dispersion as in \eqref{EQ} has been studied in \cite{DBD,DT,DR,CG1,CG2,ModulatedNLS}. In particular, in \cite{DT}, Debussche and Tsutsumi showed that the noisy dispersion allows to prevent blow-up for the focusing mass-critical 1d NLS with large data. This was extended to a path-wise result in \cite{DR}, and a completely deterministic one in \cite{CG1} (see also \cite{GaleatiThesis}). In \cite{ModulatedNLS}, we found the minimal assumption on the regularity of the occupation measure of the noise to ensure that this phenomenon occurs. But we also proved that one \emph{cannot} improve the range of exponents in the local in time Strichartz estimate for the modulated dispersion, which shows that a regularizing effect of the dynamics, in terms of recovering well-posedness in larger spaces, cannot rely only on \emph{linear} estimates. Similarly, Stewart \cite{Stewart} showed that for the 1d periodic quintic NLS, the same obstruction to well-posedness in $L^2(\T)$ with a locally Lipschitz dependence on initial data remains for the modulated equation, showing that the regularizing effect of the noisy dispersion is not systematic.

However, Chouk and Gubinelli \cite{CG2} made a breakthrough by showing that for the periodic modulated Korteweg - de Vries (KdV) equation, corresponding to \eqref{EQ} with $\L=\dx^3$ and $\NN(u)=\dx(u^2)$, one has indeed a very strong regularizing effect of the noise. Namely, they showed that the informal statement of Theorem~\ref{THM:informal} is true for the modulated KdV equation. Their proof highlights how the oscillations of a very irregular noise can collaborate with the \emph{multilinear} oscillations of the nonlinear dispersive PDE to produce a strong regularizing effect. In the present work, we aim at showing that the strong regularizing effect showed in \cite{CG2} holds for a large class of models being so-called \emph{strongly non-resonant} (following the terminology of \cite{MV}), and even for completely resonant perturbations of such models. We hope that this will clarify the picture between the general case, where there is no regularization effect at the level of linear estimates, and the strongly non-resonant one; see also \cite{JEDP} for a further discussion in this direction and announcements of the results of the current paper. 

The counter-examples of \cite{ModulatedNLS} regarding the lack of improvement on linear estimates, and of \cite{Stewart} regarding the lack of regularization of the flow map, indicate that the regularization effect in the \emph{non} strongly non-resonant case, if any, is more subtle than the one highlighted in \cite{CG2} and that we investigate here. See also \cite[Theorem 1.13]{ModulatedNLS} and \cite{ModulatedKP} where the noise improves \emph{transverse} interactions.

\begin{remark}
\rm  
Upon completion of this work, we learned of the very recent update \cite{CGLLO} by Chouk, Gubinelli, Li, Li, and Oh of the breakthrough work \cite{CG2}. In this new version of their results, the authors exemplify Theorem~\ref{THM:informal} by showing that the strong regularization by noise observed for the periodic modulated KdV equation in \cite{CG2} also holds for the periodic modulated Benjamin-Ono (BO) equation and the periodic intermediate long wave (ILW) equation. We find similar conclusions for these models entering the framework of Theorem~\ref{THM:LWP-NR} below, but our proofs are slightly different. We refer to the examples of Subsubsection~\ref{EX:NR} and Subsection~\ref{SUBS:proof} below for a further discussion.
\end{remark}

\subsection{Assumptions}

Before stating our results, we need to introduce the setting and set of assumptions on the noise $W_t$ and the abstract model \eqref{EQ:deterministic}. The first one concerns the irregularity assumptions on $W_t$, that we will quantify through its occupation measure.

\subsubsection{On the irregularity of $W_t$}
\begin{definition}\label{DEF:occup}
\rm ~~
\begin{itemize}
\item Let $W:\R\to \R$ be a continuous path. For all time interval $I\subset \R$, its \textbf{occupation measure} is defined as the measure on Borel sets $A\in\mathcal{B}(\R)$
\begin{align}\label{mu}
\mu_I(A):=\mathrm{Leb}\big(t\in I,~W_t\in A\big)=\int_I\mathbf{1}_A(W_t)dt.
\end{align}
\item In the following we assume that $\mu_I\ll dz$ for all interval $I\subset \R$. Then ${\displaystyle\frac{d\mu_I}{dz}}$ is the \textbf{local time} of $W_t$ in $I$.
\end{itemize}
\end{definition}

In this paper, we will use the following assumptions on the regularity of $\frac{d\mu_{[0;\cdot]}}{dz}$ and the dispersive properties of \eqref{EQ:deterministic}. 

\begin{assumption}{\textup{\textbf{(A0)$_{(\rho,\gamma)}$}}}\label{A0}
\rm ($(\rho,\gamma)$-irregularity of $W_t$) $W_t$ is a continuous function which is $(\rho,\gamma)$-irregular in the sense of \cite[Definition 1.3]{CaG}, i.e. such that $$\frac{d\mu_{[0;\cdot]}}{dz}(\cdot)\in C^\gamma_t\big(\R;\F L^{\rho,\infty}(\R)\big).$$
 \end{assumption}
 
 The relation between (ir)regularity of a path $W_t$ and the regularity of its occupation has been thoroughly investigated since the early 70's for deterministic paths, see the review paper \cite{Review} and references therein, and much earlier for stochastic $W_t$, especially in the case of the Brownian motion \cite{Levy}, for which we refer to the monograph \cite{RY}. While earlier works measured the regularity of the local time in H\"older spaces \cite{Levy,Review,RY}, more recent studies investigated other choices of topology: Fourier-Lebesgue in Catellier-Gubinelli \cite{CaG}, or Besov in Romito-Tolomeo \cite{RT}. See also the discussion in \cite[Subsection 1.2]{ModulatedNLS}.
 
As an example, the ``deterministic'' path $W_t=t$ satisfies \ref{A0} if and only if $\gamma\in [0;1]$ and $\rho\le 1-\gamma$. The same holds for more regular bijections, see \cite[Proposition 1.4]{CG1}. Moreover, for any $\eta\in(0;1)$, the assumption \ref{A0} for some $\rho>1$ and $\gamma>1-\eta$ implies that $W_t$ is not $\eta$-H\"older continuous at any $t$, see \cite{Review}.

\begin{remark}
\rm

When $W_t$ is a fractional Brownian motion of Hurst parameter $H\in(0;1)$, Catellier and Gubinelli \cite[Theorem 1.4]{CaG} proved that \ref{A0} holds a.s. for $\gamma\in[\frac12;1]$ and $\rho<\frac{1-\gamma}{H}$. Therefore, in Theorems~\ref{THM:LWP-NR} and~\ref{THM:LWP-R} below, the assumptions on $\rho$ (being sufficiently large) can be expressed in this case as an assumption on $H$ (being sufficiently small). Since in this case $W_t$ is known to be a.s. in $C^{H-}\setminus C^H$, the assumptions \ref{Arho} and \ref{Arho*} below on $\rho$ being large enough are indeed assumptions on the fractional Brownian motion $W_t$ being irregular enough in this case, as advertised in the informal statement of Theorem~\ref{THM:informal}.
\end{remark} 
 
 \subsubsection{Assumptions on the dispersion and the nonlinearity}
 We now state the assumptions on the dispersive model \eqref{EQ}, and in particular its \emph{strongly non-resonant} character.
 
\begin{assumption}{\textbf{\textup{(A1)}}}\label{A1}
\rm (Domain and dispersion) $\M=\T^d$ for some $d\in\N$. Then $\L$ is a skew-adjoint operator given by a Fourier multiplier $\varphi\in C(\R^d;\R)$ such that for any $u\in \S(\T^d)$ and $k\in\Z^d$, $\widehat{\L u}(k)=i\varphi(k)\widehat{u}(k)$.
\end{assumption}
%\begin{assumption}{\textbf{\textup{(A2)}}}\label{A2}
%\rm (Conservation laws) The mass $u\mapsto \frac12\|u\|_{L^2_x}^2$ is a conservation law of \eqref{EQ:deterministic}.
%\end{assumption}
\begin{assumption}{\textbf{\textup{(A2)}}}\label{A3}
\rm (Algebraic nonlinearity) there is an integer $m\ge 2$, a choice of signs $\{\pm_j\}_{j=0}^m\in\{\pm\}^{m+1}$, and a multiplier $\widehat{\NN}:\Gamma^{m+1}(\{\pm_j\})\to \C$, where
\begin{align*}
\Gamma^{m+1}(\{\pm_j\})=\Big\{(k_0,\dots,k_m)\in(\Z^d)^{m+1},~~\sum_{j=0}^m\pm_jk_j=0\Big\},
\end{align*}
such that for any $k_0\in\Z^d$ and $u\in\S(\T^d)$,
\begin{align*}
\widehat{\NN(u)}(k_0)=\sum_{\substack{(k_1,\dots,k_m)\in(\Z^d)^m\\\sum_{j=0}^m\pm_jk_j=0}}\widehat{\NN}(k_0,\dots,k_m)\prod_{j=1}^m\widehat{u}_{k_j}^{\pm_j},
\end{align*}
with the convention $\widehat{u}_{k_j}^+=\widehat{u}_{k_j}$ and $\widehat{u}_{k_j}^-=\cj{\widehat{u}_{k_j}}$, and $\widehat{u}_k$ being the $k$-th Fourier\footnote{In the remaining of the paper, we neglect various factors of $2\pi$ that appear in the normalization of the Fourier transform, as they play no role in the analysis.} coefficient of $u$:
\begin{align*}
\widehat{u}_k = \frac1{(2\pi)^d}\int_{\T^d}u(x)e^{-ik\cdot x}dx.
\end{align*}
\end{assumption}
\begin{assumption}{\textbf{\textup{(A3)}}}\label{A4}
\rm (Symbol of the non-linearity) there are non-negative $\beta=(\beta_0,\dots,\beta_m)\in \R_+^{m+1}$ and $C>0$ such that for any $(k_0,\dots,k_m)\in\Gamma^{m+1}(\{\pm_j\})$,
\begin{align*}
|\widehat{\NN}(k_0,\dots,k_m)|\le C\prod_{j=0}^m\jb{k_j}^{\beta_j}.
\end{align*}
\end{assumption}
\begin{assumption}{\textbf{\textup{(A4)}}}\label{A5}
\rm (Strongly non-resonant equation) there are $\alpha=(\alpha_1,\alpha_2)\in (0;\infty)\times[0;\infty)$ and $c>0$ such that for any $(k_0,\dots,k_m)\in\Gamma^{m+1}(\{\pm_j\})$ in the support of $\widehat{\NN}$, the resonance function
\begin{align}\label{resonance}
\Phi_{\L,\NN}(k_0,\dots,k_m)=\sum_{j=0}^m\pm_j\varphi(k_j)
\end{align}
satisfies
\begin{align}\label{NR}
\big|\Phi_{\L,\NN}(k_0,\dots,k_m)\big|\ge c \jb{k_0^*}^{\alpha_1}\jb{k_2^*}^{\alpha_2},
\end{align}
where $|k_0^*|\ge |k_1^*|\ge\dots\ge |k_m|^*$ denotes the decreasing rearrangement of $(k_0,\dots,k_m)\in\Gamma^{m+1}(\{\pm_j\})$. In particular $|k_0^*|\sim|k_1^*|$.
\end{assumption}
\begin{assumption}{\textbf{\textup{(A4*)}}}\label{A5*}
\rm (Completely resonant/Strongly non-resonant equation) 
%the equation \eqref{EQ:deterministic} is Hamiltonian with Hamiltonian $H$ and Poisson bracket $\{H,G\}=\langle J\nabla H,\nabla G\rangle_{L^2_x}$ for some skew-symmetric operator $J$. Namely, \eqref{EQ:deterministic} can be written as
%\begin{align}\label{Hamiltonian}
%\dt \ft u(t,k_0)=i\ft J(k_0)\partial_{\cj{\ft u(k_0)}}H(u),~~\forall k_0\in\Z^d,
%\end{align}
%with $\ft J(k_0)\in\R$ for all $k_0\in\Z^d$. Moreover, we assume that $H=H_0+H_1$ with:\\
%\textup{(i)} $\L=J\nabla H_0$;\\ 
%\textup{(ii)} $\NN=J\nabla H_1=\NN_0+J\nabla\RR$, with $\NN_0$ satisfying \textbf{(A3)-(A4)$_\beta$-(A5)$_\alpha$};\\
%\textup{(iii)} $\nabla\RR$ only satisfies {\ref{A3}}, is bounded on $H^{s_\RR}(\T^d)$ for some $s_\RR\in\R$ together with the estimate 
%\begin{align}\label{tame}
%\|\nabla\RR(u_1,\dots,u_m)\|_{H^\sigma}\le C\sum_{j=1}^m\|u_j\|_{H^\sigma}\prod_{j'\neq j}\|u_{j'}\|_{H^{s_\RR}}
%\end{align}
%for any $\sigma\ge s_\RR$;\\
%\textup{(iv)} $\RR$ only depends on the actions: $\RR(u)=\RR\big(\{|\ft u(k_0)|^2\}_{k_0}\big)$.
The nonlinearity $\NN$ in \eqref{EQ:deterministic} can be decomposed as $\NN=\NN_0+\RR$, with:\\
\textup{(i)} $\NN_0$ satisfies \ref{A3}-\ref{A4}$_{\beta}$-\ref{A5}$_{\alpha}$;\\
\textup{(ii)} $\RR$ only satisfies \ref{A3}-\ref{A4}$_{\beta_\RR}$, but has the completely resonant structure:
\begin{align}\label{Resonant}
\begin{cases}
\text{$m$ is odd,}\\
\text{ $\pm_j=(-1)^j$,}\\
\text{$\ft\RR(k_0,\dots,k_m)=\mathbf{1}_{k_0=k_1=\dots=k_m}\ft\RR(k_0,\dots,k_0)$}\\
\text{$\ft\RR(k_0,\dots,k_0)\in i\R$.}
\end{cases}
\end{align}
\end{assumption}
In the assumption above and in the rest of the paper, we will write \ref{A4}$_{\beta}$, \ref{A5}$_{\alpha}$, and \ref{A5*}$_{\alpha,\beta,\beta_\RR}$ to specify the parameters given by each assumption. See also {\ref{Arho}$_{\alpha,\beta,s}$} below.

\begin{remark}\label{REM:Td}
\rm
We only consider the case $\M=\T^d$ in \ref{A1}, since on $\M=\R^d$ the strong non-resonance property \eqref{NR} usually fails due to (very) low frequencies. We mainly restrict to $d=1$ in Subsubsection~\ref{EX:NR} below in order to give examples of physically relevant models, as \ref{A5*}$_{\alpha,\beta,\beta_\RR}$ typically fails for higher dimensions due to high multiplicities of eigenvalues for $\L$. This is even more true on more general closed manifolds $\M$ (e.g. $\M=\SS^d$), since there is also a lack of convolution structure.
\end{remark}

%\begin{remark}
%\rm
%In assumption \ref{A5*}, we only assume that the integrable part $\ft \RR(u)_{k_0}$ contains only decoupled terms in the actions, i.e. no terms like $\sum_{k_1}|\ft u_{k_0}|^{2m_0}\ft u_{k_0}|\ft u_{k_1}|^{2m_1}$ for some $m_1\in\N^*$. This is because in order to work at
%\end{remark}

%
%\begin{remark}
%\rm
%Even if the corresponding deterministic equation \eqref{EQ:deterministic} is Hamiltonian with a conserved energy, the latter is not conserved anymore for \eqref{EQ}.
%\end{remark}

\subsection{Main results}

Our main results are then the following theorems, showing that under a strongly non-resonant assumption, \textit{enough irregularity} of the noise $W_t$ in \eqref{EQ} allows to improve on the well-posedness theory compared to the deterministic equation \eqref{EQ:deterministic}. 

In order to state our results, we introduce the following assumption on $\rho$, depending on some parameters $\alpha\in(0;\infty)\times[0;\infty)$, $\beta\in \R^{m+1}$, and $s\in\R$, to quantify how irregular $W_t$ must be:\\ 
\begin{assumption}{\textup{\textbf{(A$_\rho$)}}}\label{Arho} (Enough irregularity of $W_t$)
\begin{align}\label{rho}
\begin{cases}
\rho\ge \alpha_1^{-1}\big(\beta_0+\beta_1^*),\\
\rho\ge \alpha_1^{-1}\big(\beta_1^*+\beta_2^*-2s),\\
{\displaystyle \rho>(\alpha_1+\alpha_2)^{-1}\big(\beta_1^*+\beta_2^*-2s+\sum_{\ell=2}^{L}(\beta_{\ell+1}^*+\frac{d}2-s)\big),}\\
\qquad\qquad\qquad\qquad \text{ for any } 2\le \ell_0\le m\text{ and }2\le L\le \ell_0-1,\\
{\displaystyle \rho>(\alpha_1+\alpha_2)^{-1}\big(\beta_1^*+\beta_2^*+\beta_0+\frac{d}2-s+\sum_{\ell=2}^{L}(\beta_{\ell+1}^*+\frac{d}2-s)\big)},\\
\qquad\qquad\qquad\qquad\text{ for any }2\le \ell_0\le m\text{ and }\ell_0+1\le L\le m,\\
\end{cases}
\end{align}
where $\beta_1^*\ge \beta_2^*\ge\dots\ge \beta_m^*$ is the decreasing rearrangement of $(\beta_1,\dots,\beta_m)\in\R_+^m$.
\end{assumption}

\subsubsection{\textbf{The strongly non-resonant case}}

Our first result deals with the purely strongly non-resonant case.
\begin{theorem}\label{THM:LWP-NR}
Assume that {\ref{A0}-\ref{A1}-\ref{A3}-\ref{A4}$_{\beta}$-\ref{A5}$_{\alpha}$} hold for some $\gamma\in(\frac12;1)$ and $\rho>0$. Then \eqref{EQ} is locally well-posed in $H^s(\T^d)$ for any $s\in\R$ provided that $\rho$ satisfies \textup{{\ref{Arho}$_{\alpha,\beta,s}$}}. More precisely:\\
\textup{(i)} For any $u_0\in H^s(\T^d)$, there exists $T=T(\|u_0\|_{H^s})>0$ such that \eqref{EQ} has a unique mild solution $u\in  C([0;T);H^s(\T^d))\cap \X^s_T$ satisfying $u_{|t=0}=u_0$;\\
\textup{(ii)} There is persistence of regularity: if $u_0\in H^\sigma(\T^d)$, $\sigma\ge s$, then $u\in C([0;T);H^\sigma(\T^d))\cap \X^\sigma_T$;\\
\textup{(iii)} The flow map $u_0\in H^s(\T^d)\mapsto u\in C([0;T);H^s(\T^d))\cap \X^s_T$ is Lipschitz continuous.
%\textup{(iv)} If \ref{A2} also holds and $\rho$ satisfies \ref{Arho}$_{\alpha,\beta,0}$, then for any $s\ge 0$, the local solution $u$ can be extended globally in time.
\end{theorem}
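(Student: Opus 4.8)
The plan is to solve \eqref{EQ} by a contraction argument carried out in the \emph{interaction representation}, where the modulated dispersion is absorbed into an oscillatory phase and the irregularity of $W_t$, read through its occupation measure, is converted into polynomial decay in the resonance function $\Phi_{\L,\NN}$. First I would pass to the twisted unknown $v(t)=e^{W_t\L}u(t)$. Since $\L$ is skew-adjoint, $e^{W_t\L}$ is a unitary group on every $H^s(\T^d)$ acting diagonally in Fourier as multiplication by $e^{iW_t\varphi(k)}$, so a direct computation turns \eqref{EQ} into $\dt v=-e^{W_t\L}\NN(e^{-W_t\L}v)$. Reading the Duhamel form frequency by frequency with the algebraic structure of \ref{A3} gives
\begin{align*}
\widehat v_{k_0}(t)=\widehat v_{k_0}(0)-\sum_{\substack{(k_1,\dots,k_m)\in(\Z^d)^m\\\sum_{j=0}^m\pm_jk_j=0}}\widehat{\NN}(k_0,\dots,k_m)\int_0^t e^{-iW_\tau\Phi_{\L,\NN}(k_0,\dots,k_m)}\prod_{j=1}^m\big(\widehat v_{k_j}(\tau)\big)^{\pm_j}\,d\tau,
\end{align*}
where the phase is (up to sign) the resonance function \eqref{resonance}. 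This identity is what exposes the interplay between the multilinear dispersive oscillations and those of the rough driver.

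The second ingredient is the gain produced by this oscillatory integral. Writing $\Theta^\lambda_t:=\int_0^t e^{-iW_\tau\lambda}\,d\tau=\int_\R e^{-i\lambda z}\tfrac{d\mu_{[0,t]}}{dz}(z)\,dz$ with the local time of Definition~\ref{DEF:occup}, assumption \ref{A0} yields the two-parameter bound
\begin{align*}
\big|\Theta^\lambda_t-\Theta^\lambda_s\big|\les |t-s|^\gamma\jb{\lambda}^{-\rho}.
\end{align*}
Thus $\Theta^\lambda$ is $\gamma$-H\"older in time with an extra $\jb\lambda^{-\rho}$ decay, and since $\gamma>\tfrac12$ the time-dependent integral can be realized and estimated as a Young integral $\int_s^t e^{-iW_\tau\lambda}g(\tau)\,d\tau=\int_s^t g\,d\Theta^\lambda$ via the sewing lemma, giving, for any $g$ that is H\"older of some exponent $\gamma'$ with $\gamma+\gamma'>1$,
\begin{align*}
\Big|\int_s^t e^{-iW_\tau\lambda}g(\tau)\,d\tau\Big|\les \jb{\lambda}^{-\rho}|t-s|^\gamma\big(\|g\|_{L^\infty}+|t-s|^{\gamma'}[g]_{C^{\gamma'}}\big).
\end{align*}
Taking $\lambda=\Phi_{\L,\NN}$ and inserting the strong non-resonance lower bound \eqref{NR} turns the factor $\jb{\lambda}^{-\rho}$ into the per-interaction frequency decay $\jb{k_0^*}^{-\alpha_1\rho}\jb{k_2^*}^{-\alpha_2\rho}$.

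I would then run a fixed point for the map $v\mapsto\Gamma(v)$ defined by the right-hand side above, in a space $\X^s_T$ controlling the $H^s(\T^d)$-regularity in the Fourier variable together with a time-H\"older norm of exponent $\gamma'\in(1-\gamma,\gamma)$, so that the products $\prod_j(\widehat v_{k_j})^{\pm_j}$ supply exactly the $C^{\gamma'}$ control required by the Young estimate. The crux is the multilinear bound: after applying the per-interaction decay $\jb{k_0^*}^{-\alpha_1\rho}\jb{k_2^*}^{-\alpha_2\rho}$, the symbol growth $\prod_j\jb{k_j}^{\beta_j}$ from \ref{A4}$_\beta$, and the output weight $\jb{k_0}^s$, one must sum over all frequencies subject to $\sum_j\pm_jk_j=0$ and recover a bound by a product of $\|v\|_{\X^s_T}$-norms, with a gain $T^\gamma$. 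Ordering the frequencies by the decreasing rearrangement $|k_0^*|\ge\cdots\ge|k_m^*|$, the two resonant top frequencies $\jb{k_0^*}\sim\jb{k_1^*}$ carry $\jb{k_0^*}^{-\alpha_1\rho}$, the third carries $\jb{k_2^*}^{-\alpha_2\rho}$, and the convolution constraint lets one sum each remaining lower frequency at a cost of $\jb{k_\ell^*}^{d/2}$ in $\ell^2$. Splitting according to how many frequencies $L$ lie above a dyadic threshold and whether the rank $\ell_0$ of the output mode $k_0$ in the rearrangement is below or above $L$ (this being why the last line of \ref{Arho} carries the extra $\beta_0$) reduces each piece to a geometric series, and the requirement that these series converge produces precisely the finite family of linear inequalities on $\rho$ collected in \ref{Arho}$_{\alpha,\beta,s}$.

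I expect this multilinear summation to be the main obstacle: the non-resonance supplies decay only in the two largest frequencies, so one must carefully redistribute that decay, together with the $d/2$ gains from the convolution constraint, across the possibly many remaining high frequencies in order to absorb simultaneously the derivative losses $\beta_j$ and the (possibly very negative) weight $\jb{k_0}^s$. This balancing is exactly what forces $\rho$ to be large, i.e.\ $W_t$ to be irregular, and is encoded in \ref{Arho}. Granting it, the map $\Gamma$ is a contraction on a ball of $\X^s_T$ for $T=T(\|u_0\|_{H^s})$ small, so Banach's fixed point theorem together with the inversion $u=e^{-W_t\L}v$ gives the unique local solution of (i); running the same multilinear estimate on the difference $v-\widetilde v$ of two solutions yields the Lipschitz bound (iii); and re-running it with the heavier weight $\jb{k_0}^\sigma$ for $\sigma\ge s$ — noting that \ref{Arho}$_{\alpha,\beta,\sigma}$ is implied by \ref{Arho}$_{\alpha,\beta,s}$, since every right-hand side there is nonincreasing in the regularity index — gives the persistence of regularity (ii).
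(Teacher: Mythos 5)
Your proposal is essentially correct, but it follows a genuinely different route from the paper. You work in the interaction representation and treat the oscillatory time integral $\int_s^t e^{-iW_\tau\lambda}g\,d\tau$ as a Young integral against $\Theta^\lambda_t=\widehat{\tfrac{d\mu_{[0;t]}}{dz}}(\lambda)$, closing the fixed point in a H\"older-in-time space of exponent $\gamma'\in(1-\gamma,\gamma)$; this is the Chouk--Gubinelli strategy via the spaces $D^W(H^s)$, which the paper explicitly acknowledges as an equally efficient alternative in the semilinear setting. The paper instead works in $U^2/V^2$ spaces adapted to the modulated flow $e^{-W_t\L}$: it reduces to atoms (piecewise-constant modulations of the linear flow), applies the occupation-time formula on each atomic subinterval to produce the factor $|I|^\gamma\jb{\Phi_{\L,\NN}}^{-\rho}$, and estimates the Duhamel term by the $DU^2$--$V^2$ duality, which the author views as the endpoint $\gamma=\tfrac12$ of the nonlinear Young theory you invoke. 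Your approach is more elementary and closer to the rough-paths literature and suffices here because $\gamma>\tfrac12$; the paper's choice buys the sharp endpoint behaviour and, more importantly, compatibility with the frequency-dependent short-time refinements $\Fb^{s,\nu}_T$ needed later for Theorem~\ref{THM:LWP-R}. The combinatorial heart --- the summation over frequencies ordered by decreasing rearrangement, with the case analysis on the rank of $k_0$ generating each line of \ref{Arho} --- is identical in both approaches, and your sketch of it matches the paper's Cases 1--3. One point you gloss over: for part (ii) the time of existence must depend only on $\|u_0\|_{H^s}$, so it is not enough to ``re-run with the heavier weight $\jb{k_0}^\sigma$''; you need the asymmetric estimate in which only the factor carrying the largest frequency is measured in $\X^\sigma_T$ and the remaining $m-1$ factors in $\X^s_T$, as in \eqref{multi00}. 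Your rearrangement-ordered summation does produce exactly this, but it should be stated, since otherwise the contraction constant at regularity $\sigma$ would force $T=T(\|u_0\|_{H^\sigma})$.
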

The definition of the functional spaces in the above statement are given in Section~\ref{SEC:Prelim}, see also the discussion in Subsection~\ref{SUBS:proof} below.

As advertised above, Theorem~\ref{THM:LWP-NR} gives a quantified version of the informal statement of Theorem~\ref{THM:informal}, and can be seen as the generalization to a broader context of the regularizing effect discovered by Chouk and Gubinelli when working on the modulated KdV and mKdV models \cite{CG2}, and also for BO and ILW in \cite{CGLLO}. Actually, Theorem~\ref{THM:LWP-NR} shows a \emph{double} regularizing effect of the noise in \eqref{EQ} compared to \eqref{EQ:deterministic} for strongly non-resonant models: while Theorem~\ref{THM:LWP-NR}~(i) indeed discusses the improvement for \eqref{EQ} over \eqref{EQ:deterministic} in the range of regularity exponents $s$ for the well-posedness theory to hold, Theorem~\ref{THM:LWP-NR}~(iii) also quantifies a regularizing effect at the level of the regularity of the flow map. Indeed, when the dispersive effect is not strong enough compared to the nonlinearity in \eqref{EQ:deterministic}, it may happen that the flow map stops being regular below a certain threshold $s_{\mathrm{reg}}\ge s_c$ of regularity above the scaling critical one, and can even be only continuous at any regularity. While one would think that the change of scaling due to the noise in \eqref{EQ} simply shifts the values of $s_c$ and accordingly of $s_{\mathrm{reg}}$, Theorem~\ref{THM:LWP-NR} applies to some models for which $s_{\mathrm{reg}}=\infty$ (the flow map is never regular); see the examples of Subsubsection~\ref{EX:NR} below. For such models, we can see that Theorem~\ref{THM:LWP-NR}~(iii) provides another strong regularizing effect since for \eqref{EQ}, $s_{\mathrm{reg}}\le s$ is allowed to be very low in Theorem~\ref{THM:LWP-NR}.

\subsubsection{\textbf{Examples for Theorem~\ref{THM:LWP-NR}}}\label{EX:NR}
We now discuss some examples of models which satisfy the assumptions of Theorem~\ref{THM:LWP-NR} and that we advertised in \cite{JEDP}. Since the main assumption \eqref{NR} typically fails as soon as $d$ or $m$ are large, we mainly discuss interesting models satisfying \ref{A5} for $d=1$ and $m=2$.\\

\textit{\textbf{\textup{(i)} Dispersive perturbations of the Burger's equation.}}\\
In our first example, we discuss the case of some dispersive perturbations of the Burger's equation as in \cite{MV} (of which we borrowed the terminology of strong non-resonance). Namely, we look at \eqref{EQ} with $d=1$, $m=2$, and $\NN(u)=\dx(u^2)$ so that \ref{A1}-\ref{A3}-\ref{A4}$_\beta$ are satisfied with $\beta=(1,0,0)$. The symbol $\varphi$ of $\L$ in \ref{A1} is assumed to be odd, to belong to $C^1(\R)\cap C^2(\R^*)$, and to be of order $\alpha+1>1$: for $\xi\in\R$ large enough,
\begin{align}\label{phialpha}
|\varphi'(\xi)|\sim |\xi|^{\alpha}\text{ and }|\varphi''(\xi)|\sim|\xi|^{\alpha-1}.
\end{align}
In this case, \cite[Lemma 2.1]{MV} ensures that \ref{A5}$_\alpha$ is satisfied with $\alpha_1=\alpha$ and $\alpha_2=1$, provided that the initial datum has zero mean value to avoid trivial resonances. This condition is preserved by both the flow of \eqref{EQ:deterministic}, and the modulated equation \eqref{EQ}. While the computation is usual for the deterministic one, let us justify it for the noisy equation: from the Duhamel formula \eqref{Duhamel}, one has
\begin{align*}
\int_{\T^d}u(t)dx= \int_{\T^d}e^{-(W_t-W_0)\L}u_0dx - \int_{\T^d}\dx\Big(\int_0^te^{-(W_t-W_\tau)\L}(u^2)d\tau\Big)dx.
\end{align*}
Since from the well-posedness theory, the time integral in the second term above makes sense in $C([0;T);H^{s+1}(\T^d))\cap X^{s+1}_T$ and is a total derivative, therefore the second term vanishes. As for the first term, the assumptions on $\varphi$ ensure that $\varphi(0)=0$, and thus
\begin{align*}
\int_{\T^d}e^{-(W_t-W_0)\L}u(0)dx = e^{-i(W_t-W_0)\varphi(0)}\widehat{u}_0(0) = \widehat{u}_0(0) = \int_{\T^d}u(0)dx.
\end{align*}
This shows the conservation of the spatial mean $\int_{\T^d}u(t,x)dx$ under \eqref{EQ}. Thus, if we start from an initial data  in
\begin{align*}
H^s_0(\T)=\Big\{u\in H^s(\T),~~\int_{\T^d}u(x)dx = 0\Big\},
\end{align*}
then the linear evolution and the nonlinearity preserve this space, and thus one can apply Theorem~\ref{THM:LWP-NR} to get well-posedness of these models in $H^s_0(\T)$.

Operators $\L$ whose symbol satisfies \eqref{phialpha} can be found in the following physically interesting models.
\medskip
\begin{enumerate}[label=(\alph*)]
\item \textbf{The pure dispersion operator $\L=\dx D^\alpha$ for any $\alpha>0$.}\\

Here $D^\alpha$ is the Fourier multiplier with symbol $|k|^\alpha$. In this case, \eqref{EQ:deterministic} corresponds to the fractional KdV/BO equation. This model arises in the modeling of one-way propagation asymptotics, in some appropriate regime, of hydrodynamics; see e.g. \cite{ABFS,KleinSaut,Lannes} and references therein. This indeed includes as a particular case the KdV equation ($\alpha=2$) and the BO equation ($\alpha=1$). The Cauchy problem for the noiseless periodic fractional KdV/BO equation \eqref{EQ:deterministic} has been widely studied, see e.g. \cite{BO2,KPV,CKSTT,Gubinelli,KappelerTopalov,KillipVisan} in the case of KdV, \cite{Molinet,GKT,KillipLaurensVisan} in the case of BO, and \cite{MV,Schippa,Jockel,Said1,Said2} in the fractional case,  for a non-exhaustive account on this problem.

For this model, for any $\alpha>0$ and $s\in\R$, the assumption \ref{Arho}$_{\alpha,\beta,s}$ reads
\begin{align}\label{rhoalpha}
\begin{cases}
\rho\ge \frac1\alpha,\\
\rho\ge -\frac{2s}{\alpha},\\
\rho >\frac1{\alpha+1}(\frac32-s).
\end{cases}
\end{align}
In particular, specializing the above to $\alpha=1,2$, we find the same restrictions on $\rho$ for the KdV ($\alpha=2$) and BO ($\alpha=1$) cases as in \cite{CGLLO}. However, note that we only build local solutions for these models in Theorem~\ref{THM:LWP-NR}, while \cite{CGLLO} also addresses the global well-posedness, nonlinear smoothing and Galerkin approximation, invariance of the white noise measure, and stochastic perturbations. See also Remark~\ref{REM:global} below.

While KdV and BO have a regular flow map at high regularity, namely, $s_{\mathrm{reg}}^{\mathrm{KdV}}=-\frac12$ \cite{BO97} and $s_{\mathrm{reg}}^{\mathrm{BO}}=0$ \cite{MST}, when $0<\alpha<1$, it is known \cite{Said1,Said2} that the flow map of the deterministic fractional KdV/BO equation \eqref{EQ:deterministic} is \emph{never} regular on $H^s_0(\T)$ (namely, $s_{\mathrm{reg}}=\infty$). Thus in this case, we witness the second regularization effect stated in Theorem~\ref{THM:LWP-NR}~(iii), namely, strong regularization by noise effect at the level of the regularity of the flow map, since for \eqref{EQ} one finds $s_{\mathrm{reg}}\le s<\infty$ for any $s\in\R$ as soon as $\rho$ satisfies \ref{A5}$_{\alpha,\beta,s}$ discussed above. 

\medskip
\item \textbf{Other one-dimensional perturbations of Burgers on $\T$}.\\

Following \cite[Remark 1.1]{MV}, the conditions \eqref{phialpha} are also satisfied for the linear operator $\L$ of the following equations:
\begin{itemize}
\item \textbf{The ILW equation with fixed depth:} in this case $\L=\dx D \coth(D)$ and satisfies \eqref{phialpha} with again $\alpha=1$. Thus we find the same regime for $\rho$ as for BO ensuring the well-posedness in $H^s_0(\T)$ for this model, similarly as in \cite{CGLLO};
\item \textbf{The Smith operator $\L=\dx (1-\dx^2)^\frac12$:} again, it satisfies \eqref{phialpha} with $\alpha=1$, and the same result as above holds.
\end{itemize}
\end{enumerate}

\begin{remark}\label{REM:ZeroDispersion}
\rm

As we see, if the noise is irregular enough (depending on $\alpha$ and $s$), as soon as one has some dispersion added to the Burger's equation $(\alpha>0$), well-posedness with a regular flow map holds. However, when $\alpha=0$, one finds the inviscid Burger's equation with multiplicative noise 
\begin{align}\label{Burger}
\dt u - \frac{d W_t}{dt}\dx u +\dx(u^2)=0.
\end{align}
It turns out that this equation is not well-posed in the classical sense (i.e. not looking at entropy solutions); see \cite[Section 5.3.1]{Flandoli}. Thus the noise does not regularize the dynamics in the limit case $\alpha=0$, which is consistent with the lower bound on $\rho$ in \eqref{rhoalpha} which blows-up as $\alpha\searrow 0$.
\end{remark}

\begin{remark}\label{REM:ZeroMean}
\rm 

Regarding the zero-mean condition in the results discussed above for dispersive perturbations of the Burger's equation, similarly as for the deterministic equation \eqref{EQ:deterministic}, one can define a flow for initial data of any mean-value by the formula
\begin{align*}
\Psi^t(u):= (\TT_{\widehat{u}_0}^t)^{-1}\circ\Psi^t_0\circ \TT_{\widehat{u}_0}^t(u),
\end{align*}
where $\Psi^t_0$ is the flow on $H^s_0(\T)$, and
\begin{align*}
\TT_c^t : u\mapsto e^{-ct\dx}u-c.
\end{align*}
This allows to extend the flow map $\Psi_0^t$ defined previously on $H^s_0(\T)$, to the whole space $H^s(\T)$. However, note that in doing so, one loses the Lipschitz regularity of the flow map, since $\TT_c^t$ is not Lipschitz continuous with respect to both $u$ and $c$.

\end{remark}

 \textit{\textbf{\textup{(ii)} Non-resonant nonlinear Schr\"odinger (NLS) equation.}}
Following Kenig-Ponce-Vega \cite{KPVNLS}, our last example is the NLS equation for some non-gauge invariant nonlinearity, namely \eqref{EQ} with $\L=-i\Delta$ and $\NN(u)=\cj{u}^m$ for $m\ge 2$, in any dimension $d\ge 1$. Then it is straightforward to check that in this case the resonant function \eqref{resonance} is given by
\begin{align*}
\Phi_{\L,\NN}(k_0,\dots,k_m) = \sum_{j=0}^m|k_j|^2.
\end{align*}
Therefore \ref{A4}$_\beta$ is satisfied with $\beta=0$ and\footnote{Strictly speaking, when $|k_0^*|=0$, \eqref{NR} fails, but this gives the decomposition $\NN=\NN_0+\S$, where $\NN_0$ satisfies \ref{A3}-\ref{A4}$_0$-\ref{A5}$_2$, and $\S$ is infinitely smoothing.} \ref{A5}$_\alpha$ with $\alpha=2$. Thus Theorem~\ref{THM:LWP-NR} applies to the modulated NLS with $\cj{u}^m$ nonlinearity, giving well-posedness in $H^s(\T^d)$ provided that $\rho$ satisfies
\begin{align*}
\begin{cases}
\rho\ge -s,\\
\rho> \frac{m-2}{2}\max(\frac{d}2-s;0).
\end{cases}
\end{align*}

\medskip
\subsubsection{\textbf{Strongly non-resonant/completely resonant case}}

We now turn to the case of strongly non-resonant PDEs perturbed by completely resonant nonlinearities. To state our result, we also define the following parameters: given $s\in\R$, $m\in\N^*$, and $\beta_\RR\in\R_+^{m+1}$, we take
\begin{align}\label{nu}
\nu=\nu(s)>\max\big(|\beta_\RR|-(m-1)s;0\big),
\end{align}
and we define the following strengthened version of \ref{Arho}.
\begin{assumption}{\textup{\textbf{(A$_\rho$*)}}}\label{Arho*} (Enough irregularity of $W_t$)
\begin{align*}
\begin{cases}
\rho\ge  \alpha_1^{-1}\big(\beta_0+\beta_1^*+|\beta_\RR|+(2-\gamma)\nu\big),\\
\rho\ge  \alpha_1^{-1}\big(\beta_1^*+\beta_2^*-2s+(1-\gamma)\nu),\\
{\displaystyle \rho>(\alpha_1+\alpha_2)^{-1}\big(\beta_1^*+\beta_2^*-2s+\sum_{\ell=2}^{L}(\beta_{\ell+1}^*+\frac{d}2-s)+(1-\gamma)\nu\big),}\\
\qquad\qquad\qquad\qquad \text{ for any } 2\le \ell_0\le m\text{ and }2\le L\le \ell_0-1,\\
{\displaystyle \rho>(\alpha_1+\alpha_2)^{-1}\big(\beta_1^*+\beta_2^*+\beta_0+|\beta_\RR|-s+\sum_{\ell=2}^{L}(\beta_{\ell+1}^*+\frac{d}2-s)+(2-\gamma)\nu\big)},\\
\qquad\qquad\qquad\qquad\text{ for any }2\le \ell_0\le m\text{ and }\ell_0+1\le L\le m,
\end{cases}
\end{align*}
\end{assumption}
\begin{theorem}\label{THM:LWP-R}
Assume that {\ref{A0}-\ref{A1}-\ref{A5*}$_{\alpha,\beta,\beta_\RR}$} hold for some $\gamma\in(\frac12;1)$ and $\rho>0$. Then \eqref{EQ} is locally well-posed in $H^s(\T^d)$ for any $s\in\R$ provided that $\rho$ satisfies \ref{Arho*}$_{\alpha,\beta,\beta_\RR,s}$. More precisely:\\
\textup{(i)} \textup{(Well-posedness at high regularity)} If $\sigma\ge s_\RR=\max(s;\frac{|\beta_\RR|}{m-1})$, for any $u_0\in H^\sigma(\T^d)$, there is $T=T(\|u_0\|_{H^{s_\RR}})>0$ and a unique solution $u\in C([0;T);H^\sigma(\T^d))\cap \X^\sigma_T$ to \eqref{EQ} with $u_{t=0}=u_0$. Moreover, the flow map $\Psi : u_0\in H^\sigma(\T^d) \mapsto C([0;T);H^\sigma(\T^d))\cap\X^\sigma_T$ is locally Lipschitz continuous;\\
\textup{(ii)} \textup{(Existence at low regularity)} For any $u_0\in H^s(\T^d)$, there exists $T=T(\|u_0\|_{H^s})>0$ and a mild solution $u\in C([0;T);H^{s}(\T^d))\cap \Fb^{s,\nu}_T\cap \Eb^s_T$ to \eqref{EQ} satisfying $u_{t=0}=u_0$;\\
\textup{(iii)} \textup{(Uniqueness)} This solution is unique in $C([0;T);H^{s}(\T^d))\cap \Fb^{s,\nu}_T\cap \Eb^s_T$;\\
\textup{(iv)} \textup{(Continuity of the flow)} The flow map $\Psi : u_0\in H^{s}(\T^d) \mapsto u\in C([0;T);H^{s}(\T^d))\cap \Fb^{s,\nu}_T\cap \Eb^s_T$ is the unique continuous extension of the mapping $\Phi_T : H^\infty(\T^d)\to C([0;T);H^s(\T^d))\cap \Fb^{s,\nu}_T\cap \Eb^s_T$.
%\textup{(v)} \textup{(Globalization)} If moreover \ref{A2} also holds and $\rho$ satisfies ??????, then for any $s\ge 0$ the local solution can be extended globally in time.
\end{theorem}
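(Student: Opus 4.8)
The plan is to split the analysis according to the regularity level, treating the strongly non-resonant part $\NN_0$ by the machinery behind Theorem~\ref{THM:LWP-NR} and the completely resonant part $\RR$ by a dedicated argument. For the high-regularity statement (i), where $\sigma\ge s_\RR=\max(s,|\beta_\RR|/(m-1))$, no special treatment of $\RR$ is needed and I would run a single contraction on the Duhamel formulation of \eqref{EQ} in $C([0;T);H^\sigma(\T^d))\cap\X^\sigma_T$. The contribution of $\NN_0$ is estimated exactly as in Theorem~\ref{THM:LWP-NR}: \ref{A5}$_\alpha$ together with the $(\rho,\gamma)$-irregularity of $W_t$ produces the multilinear smoothing bounds, and these hold since \ref{Arho*} is strictly stronger than \ref{Arho}. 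The resonant part $\RR$, whose resonance function vanishes identically and which therefore enjoys no dispersive smoothing, is instead controlled directly: from the diagonal structure \eqref{Resonant} and \ref{A4}$_{\beta_\RR}$ one has the algebra-type bound $\|\RR(u)\|_{H^\sigma}\lesssim\|u\|_{H^\sigma}^m$, valid precisely when $\sigma\ge|\beta_\RR|/(m-1)$, by placing the factor $\jb{k}^{|\beta_\RR|}|\widehat u_k|^{m-1}$ in $L^\infty_k$. Combining the two estimates closes the fixed point and yields the locally Lipschitz flow.

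For the low-regularity statements (ii)--(iv), with possibly $s<s_\RR$, the resonant term can no longer be absorbed, and the central idea is to remove it by a nonlinear gauge transform. The sign alternation $\pm_j=(-1)^j$ and the reality condition $\ft\RR(k,\dots,k)\in i\R$ in \eqref{Resonant} are precisely what makes the completely resonant self-interaction a phase (gauge) term: mode by mode, the purely resonant dynamics is an explicit unitary rotation that conserves each modulus $|\widehat u_k|$. I would therefore introduce $\widehat v_k(t)=e^{i\theta_k(t)}\widehat u_k(t)$ with a real phase $\theta_k(t)$ built from $\int_0^t|\widehat v_k(\tau)|^{m-1}\,d\tau$ (which depends on $v$ alone since $|\widehat u_k|=|\widehat v_k|$). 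A direct computation then shows that $v$ solves the gauged equation $\dt v+\tfrac{dW}{dt}\L v+\widetilde{\NN}_0(v)=0$, where $\widetilde{\NN}_0$ is the strongly non-resonant nonlinearity $\NN_0$ dressed by the phases $e^{i\theta_{k_0}}$, and the completely resonant term has disappeared.

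The gauged equation would then be solved by a contraction in $C([0;T);H^s(\T^d))\cap\Fb^{s,\nu}_T\cap\Eb^s_T$. The non-resonant multilinear estimates of Theorem~\ref{THM:LWP-NR} still apply to $\NN_0$, but the gauge phases $\theta_{k_0}$ must now be carried through the oscillatory integrals generated by the modulation $W_t$. The key point is that $\theta_k$ involves $\ft\RR(k,\dots,k)\sim\jb{k}^{|\beta_\RR|}$ times $\int_0^t|\widehat v_k|^{m-1}$, a quantity that at regularity $s$ may grow like $\jb{k}^{|\beta_\RR|-(m-1)s}$; this is exactly the frequency loss recorded by the parameter $\nu>\max(|\beta_\RR|-(m-1)s,0)$ in \eqref{nu} and encoded in the space $\Fb^{s,\nu}_T$. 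The strengthened irregularity assumption \ref{Arho*} (as opposed to \ref{Arho}) is calibrated so that the additional smoothing extracted from the $(\rho,\gamma)$-irregular modulation absorbs precisely this $\nu$-loss. Existence (ii) then follows by undoing the gauge, $u=\Gamma^{-1}(v)$, and uniqueness (iii) from uniqueness of the fixed point together with the invertibility of $\Gamma$ within the solution class $C([0;T);H^s(\T^d))\cap\Fb^{s,\nu}_T\cap\Eb^s_T$.

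The main obstacle is the control of the gauge factors $e^{i\theta_{k_0}}$ inside the multilinear and noise estimates: since $\Gamma$ is nonlinear and frequency-dependent, one must bound both products of these phases and, for two solutions, their differences. I expect the crucial technical lemma to be a bound on multiplication by $e^{i\theta_k}$ in $\Fb^{s,\nu}_T$, uniform in $k$, quantifying how the $\nu$-loss is compensated by \ref{Arho*}; this is where the interplay between the oscillations of the noise and the resonant phases is most delicate. Finally, for the flow continuity (iv), differences of gauges are controlled only through $\big||\widehat v_k|^{m-1}-|\widehat w_k|^{m-1}\big|$, which is H\"older but not Lipschitz at low regularity; consequently the construction yields only the continuous extension of the smooth-data solution map $\Phi_T$, rather than a Lipschitz flow, in contrast with statement (i) and with Theorem~\ref{THM:LWP-NR}(iii).
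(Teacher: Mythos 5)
Your part (i) coincides with the paper's Lemma~\ref{LEM:LWP}: a single contraction in $C([0;T);H^\sigma)\cap\X^\sigma_T$, with $\NN_0$ handled by Proposition~\ref{PROP:multi} and $\RR$ by the diagonal/algebra bound at $\sigma\ge|\beta_\RR|/(m-1)$. For (ii)--(iv), however, you take a genuinely different route, and it has a gap. The paper does \emph{not} gauge away $\RR$; it keeps the equation as is and runs a short-time energy/compactness scheme: the condition $\ft\RR(k,\dots,k)\in i\R$ is exploited only through the exact cancellation $\Re\{\ft\RR(k_0,\dots,k_0)|\ft v_{k_0}|^{m+1}\}=0$ in the energy identity for $\frac{d}{dt}\tfrac12|\ft v_{k_0}|^2$ (Lemma~\ref{LEM:energy}), existence follows by a priori bounds on smooth truncations $u_{0,n}=\P_{\le n}u_0$, a tail estimate, and Arzel\`a--Ascoli, and uniqueness follows the Oh--Wang device: for two solutions with the \emph{same} datum one substitutes the conservation identity for $|\ft u_{k_0}|^2$ into the difference of the resonant parts (Lemma~\ref{LEM:Ediff0}), which is what makes the symmetry-broken resonant difference controllable.

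The gap in your gauge approach is the step you yourself flag as ``the crucial technical lemma'' but do not supply: multiplication by $e^{i\theta_k(t)}$ inside the multilinear noise estimate. The entire gain in Lemma~\ref{LEM:Multi} comes from the occupation time formula $\int_I e^{iW_t\Phi}\,dt=\ft{\tfrac{d\mu_I}{dz}}(\Phi)$, applied after reducing to $U^2$-atoms, i.e.\ to integrands that are piecewise constant in $t$ apart from the factor $e^{iW_t\Phi}$. Your gauge phases $\theta_{k_j}(t)=c_{k_j}\int_0^t|\ft v_{k_j}|^{m-1}d\tau$ are continuous, solution-dependent, and not functions of $W_t$, so they destroy this structure; to restore it you must either localize in time at scale $|\theta_{k_j}'|^{-1}\sim\jb{k_j}^{-(|\beta_\RR|-(m-1)s)}$ (which is the paper's short-time method in disguise, now applied to a dressed symbol) or pay the total variation $\sim T\jb{k_j}^{|\beta_\RR|-(m-1)s}$ of the multiplier, and you give no computation showing \ref{Arho*} absorbs this loss --- \ref{Arho*} is calibrated for the energy method, where the $(1-\gamma)\nu$ and $(2-\gamma)\nu$ terms arise from chopping $I_{N_0}$ into $(N_0^*/N_0)^\nu$ pieces, not for a gauged fixed point. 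The paper's remark after the fractional Wick-ordered NLS example makes exactly this point about gauge transforms \`a la \cite{OTW,BLLZ}: in the regime of $s$ considered they ``distort too much the resonance relation.'' Finally, your uniqueness argument via ``invertibility of $\Gamma$'' is circular at low regularity: two candidate solutions carry two different gauges, and comparing $e^{i\theta_k^{(1)}}-e^{i\theta_k^{(2)}}$ costs the same $\jb{k}^{|\beta_\RR|-(m-1)s}$ loss; the paper circumvents this precisely by restricting to solutions sharing the initial datum and using the conserved-action identity, a step your proposal does not contain.
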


In contrast to Theorem~\ref{THM:LWP-NR}, Theorem~\ref{THM:LWP-R} only claims continuity of the flow map. This is because in this case, the solution is constructed through an energy/compactness method. With the assumption in \ref{A5*}$_{\alpha,\beta,\beta_\RR}$ on the completely resonant part, we first show local well-posedness for smooth enough data $u_0\in H^{s_\RR}(\T^d)$ via the same argument as for Theorem~\ref{THM:LWP-NR}. Then a priori energy estimates with the assumption on the resonant part of the nonlinearity in \ref{A5*}$_{\alpha,\beta,\beta_\RR}$ imply that this part does not contribute to the growth of the energy, while the strongly non-resonant assumption of the remaining part of the nonlinearity allows for an a priori estimate only involving the $H^s$ norm of the solution, for $s$ as in Theorem~\ref{THM:LWP-R}. This allows to finally show that for any smooth approximations of a data $u_0\in H^s(\T^d)$, a priori bounds allow to get compactness of the approximations and the existence part of Theorem~\ref{THM:LWP-R}~(ii) follows. For the uniqueness, the resonant part does not vanish anymore due to the symmetry breaking for the difference equation. Thus we follow an idea of \cite{OW} and show uniqueness for solutions sharing the same initial data, since we can estimate the difference of the resonant parts for such solutions. The continuity of the flow map follows from straightforward modifications of the arguments. Note however that to have a nonlinear estimate controlling the resonant part when the regularity $s$ is negative, we resort to frequency-dependent short-time refinements $\Fb^{s,\nu}_T$ of the $U^2$ type spaces $\X^s_T$. The energy space $\Eb^s_T$ is then a logarithmic strengthening of $C([0;T);H^s(\T^d))$ adapted to the short-time structure, see Definition~\ref{DEF:F} below. 

Let us also point out that, compared to \ref{A5}$_{\alpha,\beta,s}$ assumed to treat semilinear problems, in Theorem~\ref{THM:LWP-R} the assumption \ref{A5*}$_{\alpha,\beta,\beta_\RR,s}$ on $\rho$ now depends on $\gamma$. This is a consequence of the energy method via short-time adapted spaces that we use to deal with the completely resonant term. This observation is better exploited in \cite{ModulatedKP} where we study the modulated KP-I equation, which is a prototype of quasi-linear dispersive PDE for which one has to resort to refined energy methods since fixed-point arguments fail due to a large set of resonant interactions preventing the flow map from being regular.

\subsubsection{\textbf{Examples of completely resonant perturbations}}\label{EX:R}
We give below some examples of equations satisfying the assumptions of Theorem~\ref{THM:LWP-R}.

\medskip
\textit{\textbf{\textup{(i)} Fractional Wick-ordered mKdV equation.}}\\
Our first example is given by \eqref{EQ} with again $d=1$, $\L=\dx D^\alpha$, but now $m=3$ and 
$$\NN(u)= \big(u^2-\frac1{2\pi}\int_\T u^2(x)dx\big)\dx u$$
is the Wick-ordered cubic nonlinearity of the modified KdV equation. In the deterministic case \eqref{EQ:deterministic}, and with the usual cubic nonlinearity $\NN(u)=u^2\dx u$, this model describes the weakly nonlinear propagation of long waves in shallow channels with various dispersion effect. It reduces to the mKdV equation when $\alpha=2$, and to the modified BO equation when $\alpha=1$. This model has also been widely studied: see e.g. \cite{BO2,BO97,TT,NTT,MPV,KappelerTopalov,GLM,Chapouto1,Chapouto2} and references therein.

The $L^2$ norm is conserved for the deterministic equation \eqref{EQ:deterministic}, and from an argument similar as the one presented in the proof of Lemma~\ref{LEM:energy} below, one can check that it is also conserved for \eqref{EQ}, by a direct computation for spatially smooth solutions at the level of the interaction representation formulation of the equation, and through an approximation argument for rougher solutions. Then, if the data is at least in $L^2(\T)$, one can use the gauge transformation
$$U(t,x)=u\big(t,x-\frac{t}{2\pi}\int_\T u_0(x)^2dx\big)$$
to convert solutions of the mKdV to solutions of the Wick-ordered mKdV. However, this transformation becomes ill-defined for data rougher than $L^2$, which is included in the case we consider here. Thus we focus on the analysis of the Wick-ordered equation. Note that by an argument similar to the one in \cite{GO,OW}, a byproduct of Theorem~\ref{THM:LWP-R} applied to this model is that there exist \emph{no} solution to the original modulated periodic mKdV equation below $L^2$.
 
Then the Wick-ordered nonlinearity can be decomposed (see \cite{BO2}) as
$$\NN(u)=\big(u^2-\frac1{2\pi}\int_\T u^2(x)dx\big)\dx u = \NN_0(u)+\RR(u),$$
where
\begin{align}\label{NmKdV}
\NN_0(u)(x)=\frac13\sum_{k_0\in\Z}e^{ik_0x}ik_0\sum_{\substack{k_1,k_2,k_3\in\Z\\k_1+k_2+k_3=k_0\\(k_1+k_2)(k_1+k_3)(k_2+k_3)\neq 0}}\widehat{u}_{k_1}\widehat{u}_{k_2}\widehat{u}_{k_3},
\end{align}
and 
\begin{align}\label{RmKdV}
\RR(u)(x)=-\sum_{k\in\Z}e^{ikx}ik|\widehat{u}_k|^2\widehat{u}_k.
\end{align}
Clearly, $\RR$ satisfies \eqref{NR} with $\beta_\RR=(1,0,0)$. Moreover, it is proved\footnote{Note that $\alpha$ in our context corresponds to $\alpha+1$ in \cite{GH}.} in \cite[Lemma 4.1]{GH} that the resonance function \ref{resonance} satisfies for this model
\begin{align}\label{NRmKdV}
\big|\Phi_{\L,\NN}(k_0,k_1,k_2,k_3)\big|&=\Big|k_1|k_1|^\alpha+k_2|k_2|^\alpha+k_3|k_3|^\alpha-k_0|k_0|^\alpha\Big|\notag\\
& \sim |k_0^*|^{\alpha-1}|(k_1+k_2)(k_1+k_3)|\gtrsim |k_0^*|^{\alpha-1}\sim\jb{k_0^*}^{\alpha-1}
\end{align}
on the support of $\NN_0$. Thus $\NN_0$ satisfies \ref{A4}-\ref{A5} with $\alpha_1=\alpha-1$, $\alpha_2=0$, and $\beta=(1,0,0)$, as soon as $\alpha>1$, i.e. for dispersion strictly greater than that of the modified BO equation. This includes as a particular example the case of the modulated mKdV equation ($\alpha=2$) treated in \cite{CG2,CGLLO}. While \cite{CG2,CGLLO} proved that the modulated mKdV equation is well-posed in $H^s(\T)$ for any $s\ge \frac12$ under the mere condition $\rho \ge \frac12$, in the present work Theorem~\ref{THM:LWP-R} allows to deal with any regularity $s\in\R$, in particular $s<\frac12$, provided that
\begin{align*}
\begin{cases}
\rho> 2+(2-\gamma)(1-2s) \text{ if }\frac15\le s <\frac12,\\
\rho > 2+(2-\gamma)(1-2s) + 1-5s \text{ if }s\le \frac15. 
\end{cases}
\end{align*}

At last, note that as $s\nearrow \frac12$ and $\gamma\searrow\frac12$, we find the condition $\rho>2$, which is much worse than the condition $\rho\ge \frac12$ imposed in \cite{CG2,CGLLO} to treat the regime $s\ge \frac12$. This is partly because we use a short-time energy method, but even Lemma~\ref{LEM:LWP} gives the condition $\rho\ge 1$ when studying the well-posedness for higher regularity $s\ge s_\RR=\frac12$. The main reason is that a more careful case by case analysis shows that the lower bound in \eqref{NRmKdV} can actually be improved to $\alpha_2=1$, except in the case $|k_0|\sim|k_1|\sim|k_2|\sim|k_3|$. Thus it may be possible to refine the analysis performed in the proofs of Lemma~\ref{LEM:LWP} and Theorem~\ref{THM:LWP-R} in this particular case to take account of this fact and improve the condition \ref{Arho*} on $\rho$.

\medskip
\textit{\textbf{(ii) Fifth-order KdV equation.}}\\
Our next example is given by the fifth-order KdV equation, which is the second equation after KdV in the KdV hierarchy of completely integrable models. It is the Hamiltonian equation associated with the third conservation law of KdV, the first ones being (cf. \cite{BO2})
$$E_0(u)=\int_\T u^2dx,$$
$$E_1(u)=\int_\T(\dx u)^2dx-\frac13\int_\T u^3dx,$$
and $$E_2(u)=\int_\T(\dx^2 u)^2dx-\frac53\int_\T u(\dx u)^2dx+\frac5{36}\int_\T u^4dx.$$
Together with the Poisson structure of the KdV hierarchy, \eqref{EQ:deterministic} reads in this case (neglecting constants)
\begin{align}\label{5KdV}
\dt u =\dx \frac{dE_2}{du}(u) = \dx^5u + \dx u \dx^2u +\dx(u\dx^2 u) +u^2\dx u 
\end{align}
Thus after the same renormalizations performed for KdV in Subsubsection~\ref{EX:NR} and mKdV in the previous example, one can study the modulated version of \eqref{5KdV} with a Wick-ordered cubic nonlinearity, and in the Sobolev space $H^s_0(\T)$ of functions with zero spatial mean. This yields the model equation
\begin{align*}
\dt u - \dx^5 u +\NN_1(u)+\NN_2(u)+\NN_3(u)+\RR(u)=0,
\end{align*} 
where the quadratic nonlinearities $\NN_1,\NN_2$ are associated with the resonance function of the fractional KdV equation (with $\alpha=4$). Thus $\NN_1$ satisfies \ref{A5} with $\alpha_1=\alpha=4$ and $\alpha_2=1$, and \ref{A4} with $\beta=(0,1,2)$, while $\beta=(1,0,2)$ for $\NN_2$. The last terms $\NN_3+\RR$ correspond to the Wick-order cubic nonlinearity as in \eqref{NmKdV}-\eqref{RmKdV}. Thus this model also fits into the framework of Theorem~\ref{THM:LWP-R}.

\medskip
\textit{\textbf{(iii) Fractional Wick-ordered cubic NLS.}}\\
Our last example deals with the Wick-ordered fractional cubic NLS, i.e. $d=1$, $m=3$, $\L=iD^\alpha$ for some $\alpha>0$, and $\NN(u)=i(|u|^2-\int_\T |u|^2dx)u$ for complex-valued functions $u$. Again, we have the decomposition
$$\NN=\NN_0+\RR,$$
where
$$\NN_0(u)=i\sum_{k_0\in\Z}e^{ik_0x}\sum_{k\substack{k_1,k_2,k_3\in\Z\\k_1-k_2+k_3-k_0=0\\
k_2\not\in\{k_1,k_3\}}}\ft u_{k_1}\cj{\ft u}_{k_2}\ft u_{k_3},$$
and 
$$\RR(u)=i\sum_{k\in\Z}e^{ikx}|\ft u_k|^2\ft u_k.$$
The resonance function enjoys in this case the lower bound (see e.g. \cite[Lemma 2.1]{BLLZ})
\begin{align}\label{ResNLS}
|\Phi_{\L,\NN}(k_0,k_1,k_2,k_3)|=\big||k_1|^\alpha -|k_2|^\alpha+|k_3|^\alpha-|k_4|^\alpha\big|\gtrsim |(k_1-k_2)(k_3-k_3)||k_0^*|^{\alpha-2}
\end{align}
for any $\alpha>1$. Thus we see that for $\alpha>2$, this is bounded below as required on the support of $\NN_0$, so that this model satisfies \ref{A5*}$_{\alpha,\beta,\beta_\RR}$ with $\alpha_1=\alpha-2$, $\alpha_2=0$ and $\beta=(0,0,0,0)=\beta_\RR$. Actually, a more refined analysis as for the previous example gives that \eqref{ResNLS} can be improved to $\alpha_1=\alpha-1$, except for the High-$\dots$-High to High interactions $|k_0|\sim|k_1|\sim |k_2|\sim|k_3|$ where \eqref{ResNLS} is sharp. We see that the case of the usual Schr\"odinger equation ($\alpha=2$) is thus just out of reach of our method. As the cubic NLS is one of the most prominent example of nonlinear dispersive PDE, it would be of interest to investigate further if we can improve the well-posedness theory for the modulated equation \eqref{EQ} compared to \eqref{EQ:deterministic}; see also \cite[Theorem 1.13]{ModulatedNLS}.

\begin{remark}
\rm
To the best of our knowledge, the currently best well-posedness result for the fractional Wick-ordered cubic NLS available in the literature is the recent work \cite{BLLZ}, where the authors used a second gauge transform in the spirit of \cite{OTW} and of the modified $X^{s,b}$ spaces of \cite{TT,NTT}. However, this approach leads to a perturbation of the resonance function $\Phi_{\L,\NN}$ in \eqref{ResNLS} by $\wt\Phi = |\widehat{(u_0)}_{k_0}|^2-|\widehat{(u_0)}_{k_1}|^2+|\widehat{(u_0)}_{k_2}|^2-|\widehat{(u_0)}_{k_3}|^2$. While the latter is indeed a perturbation of the former when $u_0\in H^s(\T)$ with $s<\frac{2-\alpha}{2}$ in view of \eqref{ResNLS}, in the regime of $s$ we consider this argument seems to distort too much the resonance relation \eqref{ResNLS} and thus we find that the short-time energy method is more efficient in our case.
\end{remark}

\subsection{Sketch of the argument}\label{SUBS:proof}
After having given some examples of usual dispersive PDEs satisfying the set of assumptions of Theorem~\ref{THM:LWP-NR}, let us briefly sketch the argument behind the regularization phenomenon appearing in Theorem~\ref{THM:LWP-NR} following the idea introduced by Chouk and Gubinelli \cite{CG2}, and discuss the modifications given in our argument.

The local solution space $\X^s_T$ in Theorem~\ref{THM:LWP-NR} is a $U^2$-type space adapted to the modulated linear evolution $e^{-(W_t-W_0)\L}$; see Subsection~\ref{SUBS:space} below for definitions and properties of these spaces. It replaces the space $D^W(H^s)$ used in \cite{CG1,CG2} and further developed in \cite{CGLLO}. In particular the duality relation of Proposition~\ref{PROP:UpVp}~(iii) below can be seen at the endpoint case $\gamma=\frac12$ of the nonlinear Young integral theory of \cite[Theorem 2.3]{CG1} for $f\in C^\gamma_t\mathrm{Lip}_M(H^s)$ and $g\in C^\gamma_tH^s$.
%Thus Theorem~\ref{THM:LWP-NR}~(i) is a \textit{conditional} well-posedness result, since uniqueness of the solution $u\in C([0;T);H^s(\T^d))$ only holds if the solution also belongs to $\X^s([0;T))$. 

Indeed, the proof of Theorem~\ref{THM:LWP-NR} relies on a fixed point argument in the solution space $C([0;T);H^s(\T^d))\cap \X^s_T$ for the Duhamel formulation 
\begin{align}\label{Duhamel}
u(t)=e^{-(W_t-W_0)\L}u_0-\int_0^te^{-(W_t-W_\tau)\L}\NN(u(\tau))d\tau,
\end{align}
of \eqref{EQ}. It has been known since the early pioneering works of Bourgain \cite{BO1,BO2} (see also \cite{Beals,RR}) that instead of building a mild solution $u\in C([0;T);H^s(\T^d))$ to \eqref{EQ:deterministic} by working directly on \eqref{Duhamel} (when $W_t=t$), one can better exploit the oscillations related to the dispersive nature of the equation by passing to the so-called interaction representation of \eqref{Duhamel}; namely, looking at the equation for $v(t)=e^{t\L}u(t)$:
\begin{align}\label{interaction}
v(t)= v(0)-\int_0^te^{\tau\L}\NN\big(e^{-\tau\L}u(\tau)\big)d\tau.
\end{align}
Indeed, writing the integral equation \eqref{interaction} on the Fourier side, this yields
\begin{multline*}
\widehat{v}_{k_0}(t)= \widehat{v}_{k_0}(0)-\sum_{\substack{k_1,\dots,k_m\in \Z^d\\ \sum_{j=0}^m\pm_j k_j=0}}\widehat{\NN}(k_0,\dots,k_m)\int_0^te^{-i\tau\Phi_{\L,\NN}(k_0,\dots,k_m)}\prod_{j=0}^{m}\widehat{u}_{k_j}^{\pm_j}(\tau)d\tau,
\end{multline*}
where the phase function $\Phi_{\L,\NN}$ is defined in \eqref{resonance}. Then one can exploit the oscillatory factor in the time integral above to deal with the nonlinearity. The $X^{s,b}$ approach popularised by Bourgain in \cite{BO1,BO2} amounts to look for a solution $v$ to \eqref{interaction} not only continuous in time, but also having some regularity, namely belonging to $H^b_tH^s_x$ for some\footnote{so that $H^b_tH^s_x\subset C_tH^s_x$. In \cite{BO2} and subsequent works, suitable adaptations have been devised to deal with the borderline case $b=\frac12$.} $b>\frac12$. Indeed, this regularity allows to exploit the oscillations from the phase function in the above time integral: schematically, from the boundedness of the time integral and a standard product rule in Sobolev spaces, we can estimate
\begin{align*}
&\Big\|\int_0^te^{-i\tau\Phi_{\L,\NN}(k_0,\dots,k_m)}\prod_{j=0}^{m}\widehat{u}_{k_j}^{\pm_j}(\tau)d\tau\Big\|_{H^b_t}\les \big\|e^{-it\Phi_{\L,\NN}(k_0,\dots,k_m)}\prod_{j=0}^{m}\widehat{u}_{k_j}^{\pm_j}\big\|_{H^{b-1}_t}\\
&\qquad\les \big\|e^{-it\Phi_{\L,\NN}(k_0,\dots,k_m)}\big\|_{W^{b-1,\infty}_t}\|\widehat{u}_{k_0}\|_{H^{1-b}_t}\prod_{j=1}^{m}\|\widehat{u}_{k_j}\|_{H^b_t}\sim \jb{\Phi}^{b-1}\|\widehat{u}_{k_0}\|_{H^{1-b}_t}\prod_{j=1}^{m}\|\widehat{u}_{k_j}\|_{H^b_t}.
\end{align*}
While this analysis is usually performed by working on the Fourier side in time, Koch-Tataru \cite{KT05,KT07} interpreted the time integral above as
$$\int_0^t\Big(\prod_{j=0}^{m}\widehat{u}_{k_j}^{\pm_j}(\tau)\Big)dF(\tau),$$
where ${\displaystyle F(\tau)=\frac{e^{i\tau\Phi}}{i\Phi}}$. To benefit from the multilinear oscillations, one cannot spare too much of regularity of $F$, and thus this integral does not fall into the scope of Stieltjes integration, but is in the Young regime instead. In order to treat nonlinear dispersive PDEs at scaling critical regularity, one needs the ``endpoint" case of the Young integral (similar to the endpoint case $b=\frac12$ in $X^{s,b}$ analysis), corresponding to the duality between $DU^2$ and $V^2$; see \cite{KTV} and Proposition~\ref{PROP:duality} below.

In this work, we thus rely on the properties of $U^p/V^p$ spaces to deal with the time integral
$$\int_0^te^{-iW_\tau\Phi_{\L,\NN}(k_0,\dots,k_m)}\prod_{j=0}^{m}\widehat{u}_{k_j}^{\pm_j}(\tau)d\tau$$
when $W_t$ is rough. Indeed, the atomic structure of these spaces (see Definition~\ref{DEF:UpVp} below) is particularly adapted to exploit the $(\rho,\gamma)$-irregularity of $\mu$ \eqref{mu} through the occupation time formula \eqref{occupation}; see the proof of our main multilinear estimate in Lemma~\ref{LEM:Multi} below. While adapted $U^p/V^p$ spaces are nowadays standard spaces to deal with semilinear dispersive PDEs at scaling critical regularity thanks to the sharp estimation of time integrals as above, we emphasize here that we heavily rely on their atomic structure to deal with the \emph{sub}-critical\footnote{Again, because of the change of scaling induced by the noise in \eqref{EQ}, the regime of regularity treated in Theorems~\ref{THM:LWP-NR} and~\ref{THM:LWP-R} is sub-critical with respect to this new scaling, which is translated in assumptions \ref{Arho} and \ref{Arho*}.} equations treated in this work. Although it would in principle be possible to rely on the more standard $X^{s,b}$ spaces, this would require to work only on the physical side to deal with the $H^b_t$ norm of the time integral above, which differs from the systematic use of the space-time Fourier transform analysis performed in $X^{s,b}$ spaces (see e.g. \cite{Tao}). 

In the setting of rough stochastic differential equations, the use of Young integrals is much more usual than the $U^p/V^p$ spaces, even though the latter have been used in the context of rough paths theory \cite{FrizVictoir}. This approach amounts to choosing the topology of H\"older spaces to measure the time regularity of the functions appearing in the time integral above, i.e. the time regularity of the solution to the interaction representation formulation. In the context of deterministic nonlinear dispersive PDEs, this has been advertised in the work of Gubinelli \cite{Gubinelli} on the periodic KdV equation, following the development of controlled rough paths for rough SDEs in \cite{GubinelliJFA}. Regarding the case of modulated equations, this is also the approach followed in \cite{CG1,CG2} and extended in \cite{CGLLO}; see also \cite{CLO,COZ} for other stochastic dispersive PDEs with multiplicative noise. In the context of semilinear PDEs covered by Theorem~\ref{THM:LWP-NR}, both approaches seem to be equally efficient. In the case of quasilinear PDE without modulation, short-time $U^p/V^p$ spaces have become standard, which is why we rely on this technology to deal with the modulation in Theorem~\ref{THM:LWP-R}; see also \cite{ModulatedKP}. Finally, let us refer to the review \cite{GaReview} for a detailed account on the (nonlinear) Young integration theory heavily used in the context of rough SDEs. 

\medskip
We conclude this introduction with some further remarks.

\begin{remark}\label{REM:global}
\rm
Even if the corresponding deterministic equation \eqref{EQ:deterministic} is Hamiltonian with a conserved energy, the latter is not conserved anymore for \eqref{EQ}. However, if the $L^2$ norm is conserved for \eqref{EQ:deterministic}, one can check in practice that it is also conserved for \eqref{EQ}, as mentioned in our discussion for the modulated mKdV equation above. This is the case for many interesting models, including most of the ones presented in Subsubsections~\ref{EX:NR} and~\ref{EX:R} above. Thus, under assumption \ref{Arho}$_{\alpha,\beta,0}$ or \ref{Arho*}$_{\alpha,\beta,\beta_\RR,0}$ ensuring well-posedness in $L^2(\T^d)$, one can iterate the local well-posedness statements of Theorems~\ref{THM:LWP-NR} or~\ref{THM:LWP-R} to globalize the solutions. Following \cite[Theorem 1.18 and Theorem 1.19]{CGLLO}, for these same models, provided that we take now $s<-\frac{d}2$ in the assumptions of Theorem~\ref{THM:LWP-NR} and~\ref{THM:LWP-R}, we could also use Bourgain's invariant measure argument \cite{BO94,BO96} to show that the white noise measure
\begin{align*}
d\nu = ``\ZZ^{-1}e^{-\frac12\|u\|_{L^2}^2}du",
\end{align*}
formally defined as the Gaussian measure on ${\displaystyle \bigcap_{s<-\frac{d}2}H^s(\T^d)}$ with the identity as covariance operator, is invariant under \eqref{EQ} and can thus be used as a ``statistical'' conservation law to globalize the local solution starting from $\nu-$a.e. initial data $u_0\in H^s(\T^d)$, $s<-\frac{d}2$.
\end{remark}

%\begin{remark}
%\rm
%In assumption \ref{A5*}, we only assume that the integrable part $\ft \RR(u)_{k_0}$ contains only decoupled terms in the actions, i.e. no terms like $\sum_{k_1}|\ft u_{k_0}|^{2m_0}\ft u_{k_0}|\ft u_{k_1}|^{2m_1}$ for some $m_1\in\N^*$. This is because this term has no multilinear smoothing at all, and thus in the regime $|k_1|\gg |k_0|$, one cannot control the contributions of the $k_1$ frequencies by using the localization on small frequency-dependent time intervals.
%\end{remark}

\begin{remark}
\rm
As we discussed in the introduction, we only consider the case of \eqref{EQ} on (square) tori. Indeed, in other geometries such as the Euclidean space or irrational tori, one can have bad High$\times\dots\times$High$\to$Very Low quasi-resonant interactions (i.e. $|\Phi_{\L,\NN}|\les 1$) which are not improved by the noise. As we have seen in the examples above, on square tori one can remove exact resonances using gauge transformations to prevent this issue.
\end{remark}

%\begin{remark}
%????????????Regularity range for well-posedness in Theorem~\ref{THM:LWP-NR} non optimal (e.g. compared to the KdV case treated in \cite{CG2}). This is because we do not perform an analysis tailored to the particular symbol of $\L$ but our point is to emphasize that we can treat super-critical regularities $s$ for \eqref{EQ} compared to the deterministic case \eqref{EQ:deterministic} provided that $\rho$ is large enough depending on $s$. The relation between $(\gamma,\rho)$ and $s$ can be more optimised through a finer, model-dependent analysis, incorporating e.g. linear or multilinear Strichartz estimates, transversality estimates,... See \cite{ModulatedNLS,ModulatedKP} where these kind of estimates are adapted to the modulated equation \eqref{EQ}. 
%\end{remark}

\begin{remark}
\rm

As mentioned in the introduction, one motivation to study the modulated equation \eqref{EQ} comes from nonlinear optics, where the modulated NLS equation on $\M=\R$ appears as an effective model for the propagation of a signal in optical fibre with dispersion management. However, this particular model does not satisfy the strongly non-resonant condition. As a first step towards understanding a potential regularization by noise (in terms of the range of local well-posedness) for \eqref{EQ} in the non strongly non-resonant case, we study in \cite{ModulatedKP} the modulated KP-I equation, which is a model known to have a large set of quasi-resonances, preventing Theorems~\ref{THM:LWP-NR} or~\ref{THM:LWP-R} to apply. We show in this case that the noise can indeed improve the worst quasi-resonant High-Low interaction in a short-time argument.
\end{remark}

\begin{ackno}

\rm 
The author would like to thank Massimiliano Gubinelli, Tadahiro Oh, and Nikolay Tzvetkov for helpful discussions, which in particular improved the presentation of this work. The author was partially supported by the ANR project Smooth ANR-22-CE40-0017 and PEPS JCJC project BFC n°272076.

\end{ackno}

\section{Preliminaries}\label{SEC:Prelim}
\subsection{Notations}
For complex numbers $a\in\C$, we write $a^-=\cj a$ and $a^+=a$. For positive numbers $A$ and $B$, we write $A\les B$ if there is a constant $C>0$, independent of the various parameters, such that $A\le CB$; $C=10^{42}$ should work throughout this text. We also write $A\sim B$ if both $A\les B$ and $B\les A$. We also write $A\vee B = \max(A;B)$ and $A\wedge B=\min(A;B)$.

In the following, we use $N,N_j,M\in 2^{\N}$ to denote dyadic integers in $2^{\N}=\{2^m,~~m\in \N\}$. For $N\in 2^\N$ a dyadic integer, we first define $\P_{\le N}$ to be the Dirichlet projection onto the frequencies $\{|k|\leq N\}\subset \Z^d$, and $\P_N=\P_{\le N}-\P_{\le \frac{N}2}$, with the convention that $\P_{\le \frac12}u=\ft u_0$. %More generally, for $S\subset\Z^d$ a set of frequencies, we define the projector $\P_S$ as the Fourier multiplier with symbol $\mathbf{1}_S(k)$, $k\in\Z^d$.

At last, for a non-negative tuple $\beta\in [0;\infty)^{m+1}$, we write $|\beta|=\sum_{j=0}^m\beta_j$.

\subsection{Properties associated with the noise}
In this whole manuscript, we assume that $W:\R\to\R$ is continuous, and we write $\mu$ for its occupation measure \eqref{mu}. We will always assume that $d\mu_{I}$ is absolutely continuous with respect to the Lebesgue measure $dz$ for any interval $I\subset \R$. We will make heavy use of the following \textbf{occupation time formula}; see \cite[Theorem 6.4]{Review}.
\begin{proposition}
For every non-negative measurable function $F:\R\to [0;\infty)$ and interval $I\subset \R$, it holds
\begin{align}\label{occupation}
\int_IF(W_t)dt = \int_\R F(z)\frac{d\mu_{I}}{dz}(z)dz.
\end{align}
\end{proposition}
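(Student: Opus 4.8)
The plan is to prove the identity \eqref{occupation} by the standard measure-theoretic approximation argument, establishing it first for indicator functions and then extending it via linearity and monotone convergence to arbitrary non-negative measurable $F$. The only structural input beyond elementary integration theory is the existence of the density $\frac{d\mu_I}{dz}$, which is precisely why the absolute continuity $\mu_I\ll dz$ is imposed as a standing hypothesis; with this in hand there is no genuine obstacle, the result being a classical consequence of the ``standard machine''.

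First I would verify the formula for $F=\mathbf{1}_A$ with $A\in\mathcal{B}(\R)$ a Borel set. In this case the left-hand side equals $\int_I\mathbf{1}_A(W_t)dt$, which is exactly the defining expression \eqref{mu} for the occupation measure $\mu_I(A)$. On the other hand, the right-hand side reduces to $\int_\R\mathbf{1}_A(z)\frac{d\mu_I}{dz}(z)dz=\int_A\frac{d\mu_I}{dz}(z)dz$, and this equals $\mu_I(A)$ by the very definition of the Radon-Nikodym derivative. Hence the two sides coincide for every indicator, and by linearity of both integrals the identity extends at once to any non-negative simple function $F=\sum_{i=1}^n c_i\mathbf{1}_{A_i}$ with $c_i\ge 0$ and $A_i\in\mathcal{B}(\R)$.

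Finally, for a general non-negative measurable $F:\R\to[0;\infty)$, I would choose an increasing sequence of non-negative simple functions $F_n\nearrow F$ converging pointwise. Then $F_n(W_t)\nearrow F(W_t)$ for every $t\in I$, and since $\frac{d\mu_I}{dz}\ge 0$ one also has $F_n(z)\frac{d\mu_I}{dz}(z)\nearrow F(z)\frac{d\mu_I}{dz}(z)$ for a.e. $z\in\R$. Applying the monotone convergence theorem on both sides and invoking the identity already established for the $F_n$ yields \eqref{occupation} in full generality.
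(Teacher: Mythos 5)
Your proof is correct and is exactly the classical ``standard machine'' argument (indicators via the definitions of $\mu_I$ and of the Radon--Nikodym derivative, then simple functions, then monotone convergence); the paper does not reprove this fact but simply cites \cite[Theorem 6.4]{Review}, where the same standard argument is used. Nothing is missing: the existence of the density is a standing hypothesis, and measurability of $t\mapsto\mathbf{1}_A(W_t)$ follows from the continuity of $W$.
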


\subsection{Function spaces}\label{SUBS:space}

\subsubsection{\textbf{Adapted function spaces}}
In this subsection we recall the definition and properties of $U^2/V^2$-type spaces associated with the flow of the linear dispersive equation with modulated dispersion that we already used in \cite{ModulatedNLS}.

These spaces were first used in \cite{KT05,KT07} to study the Cauchy problem for some dispersive PDEs, and we refer to \cite{KTV,HHK,HTT,SchippaThesis} and in particular \cite{CHT} for the proofs of their properties that we list below. %We give the necessary adaptations to the case of Banach-valued functions as in \cite{KTV} since we consider functions of $t$ with values in $H^s(\T^d)$.

\begin{definition}\label{DEF:UpVp}
\rm
Let $H$ be a Hilbert space, $1\le p<\infty$, and $I=[a;b)\subset \R$ with $-\infty\le a<b\le\infty$. Let then $\mathcal{Z}(I)$ be the collection of finite non-decreasing sequences $\{ t_k \}_{k=0}^K$ in $I$ with $t_0=a$ and $t_K=b$.\\
\textup{(i)} We define $V^p(I)H$ as the space of functions $u: I \to H$ such that $u(t)$ has a limit as $t\searrow a$ and also ${\displaystyle \lim_{t\nearrow b}u(t)=0}$, endowed with the norm
\begin{align*}
\| u \|_{V^p(I)H} = \sup_{ \{t_k \}_{k=0}^K \in \mathcal{Z}(I)}\Big(  \sum_{k=1}^K \|u(t_k) - u(t_{k-1})\|_{H}^p \Big)^{\frac1p}.
\end{align*}
\textup{(ii)} We define a $U^p(I)H$-atom to be a piecewise continuous function $A:I\to H$ such that
\begin{align*}
A(t) = \sum_{k=1}^{K}\ind_{[t_{k-1};t_{k})}\psi_{k}
\end{align*}
for some partition $\{t_{k}\}_{k=0}^K\in\ZZ(I)$ and collection $\{\psi_{k}\}_{k=1}^K\in H^K$ such that ${\displaystyle \sum_{k=1}^K\|\psi_{k}\|_{H}^p\le 1}$. Note that $A$ is right-continuous with ${\displaystyle \lim_{t\searrow a}A(t)=0}$ and has a limit as $t\nearrow b$.

Then $U^p(I)H$ is the space of functions $u:I\to H$ such that
\begin{align*}
u = \sum_{j=0}^\infty \lambda_j A_j,
\end{align*}
with $\{\lambda_j\}_j\in\ell^1(\N;\K)$, and $A_j$ are $U^p(I)H$-atoms, endowed with the norm
\begin{align*}
\|u\|_{U^p(I)H}=\inf\Big\{\sum_{j=0}^\infty|\lambda_j|,~~u=\sum_{j=0}^\infty\lambda_jA_j,~~A_j\text{ are $U^p(I)H$-atoms}\Big\}.
\end{align*}

\end{definition}
These spaces enjoy the following properties; again, we refer to \cite{KTV,HHK,HTT,CHT,SchippaThesis} for the proofs.
\begin{proposition}\label{PROP:UpVp0}
The following properties hold:\\
\textup{(i)} $U^p(I)H$ and $V^p(I)H$ are Banach spaces.\\
\textup{(ii)} If $u\in U^p([a;b))H$, $a<c<d<b$, and $u(c)=0$, then $\mathbf{1}_{[c;d)}(t)u\in U^p([c;d))H$ and $$\|u\|_{U^p([c;d))H}=\|\mathbf{1}_{[c;d)} u\|_{U^p([a;b))H}\le \|u\|_{U^p([c;d))H}.$$\\
Similarly, if $u\in V^p([a;b))H)$, $a<c<d<b$, and $u(d)=0$, then $\mathbf{1}_{[c;d)}(t)u\in V^p([c;d))H$ and $\|u\|_{V^p([c;d))H}+\|u(c)\|_{H}=\|\mathbf{1}_{[c;d)} u\|_{V^p([a;b))H}$.\\
\textup{(iii)} For any $1\le p<q<\infty$, there are the continuous embeddings $
U^p(I)H\subset U^q H\subset L^\infty(I;H)$ and $V^p(I)H\subset V^q(I)H\subset L^\infty(I;H)$.\\
\textup{(iv)} If $u\in U^p(I)H$ with $u(b)=0$, then $u\in V^p(I)H$ and $\|u\|_{V^p(I)H}\les \|u\|_{U^p(I)H}$.\\
Similarly, for any $1\le p<q<\infty$, if $u\in V^p(I)H$ is right-continuous with $u(a)=0$, then $u\in U^q(I)H$ and $\|u\|_{U^q(I)H}\les \|u\|_{V^p(I)H}$.
\end{proposition}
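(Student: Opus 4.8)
The plan is to build everything on the elementary bound into $L^\infty(I;H)$ and then treat the four items in increasing order of difficulty. First I would prove that $\|u\|_{L^\infty(I;H)}\le \|u\|_{U^p(I)H}$ and $\|u\|_{L^\infty(I;H)}\le \|u\|_{V^p(I)H}$. For a $U^p$-atom $A=\sum_k\ind_{[t_{k-1};t_k)}\psi_k$ one has $\|A(t)\|_H^p\le \sum_k\|\psi_k\|_H^p\le 1$, hence $\|A\|_{L^\infty}\le 1$; summing $u=\sum_j\lambda_j A_j$ and taking the infimum gives the bound for $U^p$. For $V^p$, the normalization $\lim_{t\nearrow b}u(t)=0$ together with the two-point partition $\{t,b\}$ gives $\|u(t)\|_H\le \|u\|_{V^p(I)H}$. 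These are exactly the $L^\infty$ embeddings in (iii). The remaining inclusions in (iii) follow from $\ell^p\hookrightarrow \ell^q$ with $\|\cdot\|_{\ell^q}\le\|\cdot\|_{\ell^p}$ for $p<q$: a $U^p$-atom satisfies $\sum_k\|\psi_k\|_H^q\le(\sum_k\|\psi_k\|_H^p)^{q/p}\le1$ and is therefore a $U^q$-atom, so $\|u\|_{U^q}\le\|u\|_{U^p}$; and applying the same inequality to the increments of an arbitrary partition yields $\|u\|_{V^q}\le\|u\|_{V^p}$.

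For completeness (i) I would argue separately. For $V^p$, the $L^\infty$ bound shows any $V^p$-Cauchy sequence is uniformly Cauchy, hence converges uniformly to some $u$; the boundary limits at $a$ and $b$ pass to $u$, and lower semicontinuity of the $p$-variation under pointwise limits (estimated partition by partition) gives $u\in V^p$ together with $\|u_n-u\|_{V^p}\to0$. For $U^p$ I would use the standard atomic argument: after checking nondegeneracy of the norm, extract a subsequence with $\|u_{n+1}-u_n\|_{U^p}\le2^{-n}$, decompose each difference into atoms realizing its $U^p$ norm up to a factor $2$, and reassemble all these atoms into one absolutely summable series whose sum is the $U^p$-limit; the $L^\infty$ bound identifies it with the uniform limit and yields norm convergence.

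The restriction and gluing identities (ii) are bookkeeping on partitions and atoms. For the $U^p$ statement, the hypothesis $u(c)=0$ is what makes $\ind_{[c;d)}u$ admit an atomic decomposition over $[c;d)$: one truncates the partition of each atom to $[c;d)$ and discards the pieces outside, the vanishing at $c$ ensuring the truncated step functions are again atoms after renormalization, which gives the stated equality and monotonicity of norms. For the $V^p$ statement one simply compares the supremum of the $p$-variation over partitions of $[c;d)$ with that over $[a;b)$; the extra term $\|u(c)\|_H$ records the increment from the left endpoint under the normalization at $b$.

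Finally, (iv) carries the real content, and I expect the second half to be the main obstacle. The direction $U^p\hookrightarrow V^p$ (under $u(b)=0$) is the easy one: one checks that the $p$-variation seminorm of every atom is $\les1$ (each $\psi_k$ enters at most two increments, so $\sum_k\|\psi_{k+1}-\psi_k\|_H^p\les\sum_k\|\psi_k\|_H^p\le1$), and then subadditivity of the $p$-variation together with the hypothesis $u(b)=0$ gives $u\in V^p$ with $\|u\|_{V^p}\les\|u\|_{U^p}$. The hard direction is $V^p\hookrightarrow U^q$ for $p<q$. The plan is a dyadic stopping-time construction: after normalizing $\|u\|_{V^p}=1$ by homogeneity, set $t_0=a$ and $t_k=\inf\{t>t_{k-1}:\|u(t)-u(t_{k-1})\|_H>2^{-n}\}$, and let $u_n=\sum_k\ind_{[t_{k-1};t_k)}u(t_{k-1})$ be the induced step approximation at scale $2^{-n}$. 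Right-continuity and $u(a)=0$ give $u_n\to u$ in $L^\infty$ and the telescoping representation $u=u_0+\sum_{n\ge0}(u_{n+1}-u_n)$. The definition of the $p$-variation bounds the number of level-$n$ intervals by $\les2^{np}$, while each value of $u_{n+1}-u_n$ has size $\les2^{-n}$; hence $u_{n+1}-u_n$ equals $c_n$ times a $U^q$-atom with $c_n\les(2^{np}2^{-nq})^{1/q}=2^{-n(q-p)/q}$, and $\sum_nc_n<\infty$ precisely because $q>p$. This yields $\|u\|_{U^q}\les\|u\|_{V^p}$. The delicate points are verifying that the stopping times are well defined and finite in number at each scale (using that a $V^p$ function has finite $p$-variation and at most countably many jumps), and checking that the telescoped series genuinely assembles into an element of $U^q$ with the claimed coefficient sum.
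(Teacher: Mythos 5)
The paper does not prove this proposition at all --- it is quoted as standard and the reader is referred to \cite{KTV,HHK,HTT,CHT,SchippaThesis} --- so there is no in-paper argument to compare against. Your proposal is a correct reconstruction of exactly the arguments given in those references: the $L^\infty$ bound via atoms and via the two-point partition, the $\ell^p\hookrightarrow\ell^q$ comparison for the nested embeddings, the absolutely-convergent-series criterion for completeness of $U^p$, the truncation/extension of atoms and partitions for (ii), the ``each $\psi_k$ contributes to at most two increments'' bound for $U^p\subset V^p$, and the dyadic stopping-time decomposition with $K_n\lesssim 2^{np}$ intervals at scale $2^{-n}$ for $V^p\hookrightarrow U^q$ (this last one is precisely the proof of Proposition~2.5 in \cite{HHK}, where right-continuity is used to guarantee that each stopped increment has norm at least $2^{-n}$, so that the counting bound follows from the $p$-variation). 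No gaps; this is essentially the same approach as the cited sources.
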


\begin{definition}\label{DEF:DU}
\rm
We define the Banach space $$DU^2(I)H=\big\{\dt u,~~u\in U^2(I)H\big\},$$
where the derivative is taken in the sense of distributions, endowed with the norm $\|f\|_{DU^2(I)H}=\|u\|_{U^2(I)H}$. The condition $u(a)=0$ for $u\in U^p(I)H$ guarantees that for $f\in DU^2(I)H$ there is a unique choice of $u\in U^2(I)H$ such that $f=\dt u$.
\end{definition}

The main result regarding this space is the duality relation between $DU^2(I)H$ and $V^2(I)H$.

\begin{proposition}\label{PROP:duality}
For all $f\in L^1(I;H)\subset DU^2(I)H$, it holds
\begin{align*}
\|f\|_{DU^2(I)H}=\sup_{\|v\|_{V^2(I)H}\le 1}\Big|\int_I\langle f(t),v(t)\rangle
_Hdt\Big|.
\end{align*}
\end{proposition}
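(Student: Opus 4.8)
The plan is to reduce the claim to the abstract isometric duality $(U^2(I)H)^\ast\cong V^2(I)H$ of the $U^p/V^p$ calculus \cite{KTV,HHK,CHT}, and then to match the concrete integral pairing with the bilinear form realizing that duality. Write $f=\dt u$, where by Definition~\ref{DEF:DU} the element $u\in U^2(I)H$ is the unique primitive with $u(a^+)=0$, so that $\|f\|_{DU^2(I)H}=\|u\|_{U^2(I)H}$ by definition. Recall the canonical bilinear form $B:U^2(I)H\times V^2(I)H\to\K$, defined as the limit over partitions $\{t_k\}\in\ZZ(I)$ of the Riemann--Stieltjes sums $\sum_{k=1}^K\langle u(t_k)-u(t_{k-1}),v(t_{k-1})\rangle_H$. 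With these notations the assertion is equivalent to the two facts: (a) $B(u,v)=\int_I\langle f(t),v(t)\rangle_H\,dt$ for every $v\in V^2(I)H$ whenever $f\in L^1(I;H)$, and (b) $\|u\|_{U^2(I)H}=\sup_{\|v\|_{V^2(I)H}\le1}|B(u,v)|$.

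For (a) I would first record why $L^1(I;H)\subset DU^2(I)H$: given $f\in L^1$, the function $u(t)=\int_a^t f(s)\,ds$ is continuous with $u(a^+)=0$ and has total variation at most $\|f\|_{L^1(I;H)}$, hence lies in $V^1(I)H$, and then $u\in U^2(I)H$ by Proposition~\ref{PROP:UpVp0}(iv) applied with $p=1<q=2$; this $u$ is precisely the primitive above. Since $u$ is absolutely continuous with $\dt u=f$, each summand is $\langle\int_{t_{k-1}}^{t_k}f,\,v(t_{k-1})\rangle_H$, so as the mesh tends to zero the sums converge (dominated convergence, $v$ being bounded) to $\int_I\langle f(t),v(t^-)\rangle_H\,dt$; as $v\in V^2(I)H$ has bounded variation its left limit satisfies $v(t^-)=v(t)$ off a countable set, whence the integral equals $\int_I\langle f,v\rangle_H\,dt$. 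No integration by parts or boundary terms arise here, since in this convention the pairing is already in the $\int\langle v,du\rangle$ form adapted to $u(a^+)=0$; the endpoint normalizations $u(a^+)=0$ and $v(b^-)=0$ built into Definition~\ref{DEF:UpVp} are exactly what make $B$ an isometric pairing (and would absorb the boundary terms under the dual $\int\langle u,dv\rangle$ convention).

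For (b), the inequality $\le$ is the boundedness $|B(u,v)|\le\|u\|_{U^2(I)H}\|v\|_{V^2(I)H}$, which I would prove by testing on a single $U^2(I)H$-atom $A=\sum_k\ind_{[t_{k-1};t_k)}\psi_k$ with $\sum_k\|\psi_k\|_H^2\le1$: Abel summation rewrites $B(A,v)$ as a sum of pairings of the $\psi_k$ against increments of $v$ along a common refinement, and Cauchy--Schwarz combined with the definition of the $V^2$ norm bounds this by $\|v\|_{V^2(I)H}$; the general case follows from the atomic decomposition of $u$ and $\ell^1$-summability of its coefficients. The reverse inequality $\ge$ is where the full duality enters: the map $v\mapsto B(\cdot,v)$ is an isometric isomorphism $V^2(I)H\to(U^2(I)H)^\ast$, so the canonical (always isometric) embedding $U^2(I)H\hookrightarrow(U^2(I)H)^{\ast\ast}\cong(V^2(I)H)^\ast$ identifies $u$ with the functional $v\mapsto B(u,v)$ of norm $\|u\|_{U^2(I)H}$, giving $\|u\|_{U^2(I)H}=\sup_{\|v\|_{V^2(I)H}\le1}|B(u,v)|$.

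Combining (a) and (b) yields the asserted identity. The genuinely analytic obstacle is the isometric duality $(U^2)^\ast\cong V^2$ together with the boundedness of $B$ — the technical heart of the $U^p/V^p$ theory, which I would import from \cite{KTV,HHK,CHT}; the remaining content is the elementary Stieltjes identification in (a) and the soft bidual argument for the sharpness in (b).
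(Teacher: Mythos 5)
The paper does not prove this proposition at all --- it is quoted from the $U^p/V^p$ literature with a pointer to \cite{KTV,HHK,CHT} --- and your argument is precisely the proof given there (Hadac--Herr--Koch, Prop.~2.10 deduced from the duality theorem $(U^p)^*\cong V^{p'}$): identify the Stieltjes pairing with $\int_I\langle f,v\rangle_H\,dt$ for the absolutely continuous primitive, bound $|B(u,v)|\le\|u\|_{U^2}\|v\|_{V^2}$ by Abel summation and Cauchy--Schwarz on atoms, and obtain the reverse inequality from the isometric isomorphism $v\mapsto B(\cdot,v)$ together with the canonical embedding into the bidual. So the proposal is correct and takes the same route the paper implicitly relies on; the one thing to be aware of is that the genuinely hard ingredient --- that $T\colon V^2(I)H\to(U^2(I)H)^*$ is an isometric \emph{isomorphism}, which is what makes the ``$\ge$'' direction work --- is imported rather than proved, exactly as in the paper. (A cosmetic point: with the paper's normalization $\lim_{t\nearrow b}v(t)=0$ in Definition~\ref{DEF:UpVp}, the primitive $u(t)=\int_a^tf$ is of bounded variation but need not vanish at $b^-$, so the appeal to Proposition~\ref{PROP:UpVp0}~(iv) for $L^1\subset DU^2$ should be read with the usual convention that the $V^1$ seminorm is taken over partitions not forcing $u(b)=0$; this is a quirk of the paper's definitions, not a gap in your argument.)
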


\begin{definition}\label{DEF:UpVpW}
\rm
Let $1 \leq p <\infty$. We define the adapted function space
\begin{align*}
U^p_{W\L}(I)L^2(\T^d) = \big\{u:I\to L^2(\T^d) : e^{W_t\L}u\in U^p(I)L^2(\T^d)\big\}
\end{align*}
and 
\begin{align*}
V^p_{W\L}(I)L^2(\T^d) = \big\{ u: I \to L^2(\T^d) : e^{W_t\L} u \in V^p(I) L^2(\T^d) \big\}
\end{align*}
with norms given by
\begin{align*}
\|u\|_{U^p_{W\L}(I)L^2_x}=\|e^{W_t\L}u\|_{U^p(I)L^2_x}\text{ and }\|u\|_{V^p_{W\L}(I)M^2}=\|e^{W_t\L}u\|_{V^p(I)L^2_x}.
\end{align*}
%\end{definition}

%\begin{definition}
%\label{DEF:XY}
%\rm
%Let $s \in \R$ and $r\in[1;\infty]$.\\
%\textup{(i)} We define $X^{s,r}$ as the space of all functions $u:\R\to H^s(\T^d)$ such that for any $k\in \Z^d$, the map $t\mapsto \widehat{\big(e^{W_t\L}u(t)\big)}(k)$ is in $U^2(\R;\K)$ with finite norm
%\begin{align*}
%\|u\|_{X^{s,r}} = \Big\|\langle k\rangle^{s}\big\|\widehat{\big(e^{W_t\L}u(t)\big)}(k)\big\|_{U^2_t}\Big\|_{L^r(\Z^d)}.
%\end{align*}
%
%\noi
%\textup{(ii)} We define $Y^{s,r}$ as the space of all functions $u: \R \times \T^d \to \K$ such that for any $k\in \Z^d$, the map $t\mapsto \widehat{\big(e^{W_t\L}u(t)\big)}(k)$ is in $V^2(\R;\K)$ with finite norm
%\begin{align*}
%\|u\|_{Y^{s,r}} = \Big\|\langle k\rangle^{s}\big\|\widehat{\big(e^{W_t\L}u(t)\big)}(k)\big\|_{V^2_t}\Big\|_{L^r(\Z^d)}.
%\end{align*}
%
%\noi
%\textup{(iii)} For a time interval $I\subset \R$, we define the restricted spaces $X^{s,r}(I)$ and $Y^{s,r}(I)$ by the norms
%\begin{align*}
%\|u\|_{X^{s,r}(I)}=\inf\big\{\|\widetilde{u}\|_{X^{s,r}},~~u\equiv \widetilde{u}\text{ a.e. on }I\big\}\text{and }\|u\|_{Y^{s,r}(I)}=\inf\big\{\|\widetilde{u}\|_{Y^{s,r}},~~u\equiv \widetilde{u}\text{ a.e. on }I\big\}.
%\end{align*}
Following \cite{HTT}, for $s\in\R$, we then define the Banach spaces $\X^s(I)$ and $\Y^s(I)$ via the norms
\begin{align*}
\|u\|_{\X^s(I)}^2=\sum_N N^{2s}\big\|\P_N u\big\|_{U^2_{W\L}(I)L^2_x}^2\qquad\text{and}\qquad\|u\|_{\Y^s(I)}^2=\sum_N N^{2s}\big\|\P_N u\big\|_{V^2_{W\L}(I)L^2_x}^2.
\end{align*}
When $I=[0;T)$ for some $T>0$, we simply write $\X^{s}(I)=\X^s_T$ and $\Y^{s}(I)=\Y^s_T$. 
%We have similar definitions for the restrictions of $U^p_{W\L}H^s(\T^d)$ and $V^p_{W\L}H^s(\T^d)$ to time intervals $I$.
\end{definition}

\begin{remark}
\rm
Since $U^2H^s(\T^d)\subset L^\infty(\R;H^s(\T^d))$ continuously, we have that for any $T>0$, $C([0;T);H^s(\T^d))\cap \X^s_T $ is a closed subspace of $C([0;T);H^s(\T^d))$.
\end{remark}

\begin{remark}
\rm

As mentioned in the introduction, the adapted solution space $\X^{s}_T$ for $u$ solving \eqref{EQ} is the replacement of the solution space $D^W(H^s)$ used by Chouk and Gubinelli in the context of modulated Schr\"odinger and KdV equations; see \cite[Definition 2.2]{CG1} and the more detailed presentation in \cite{CGLLO}.
\end{remark}

\begin{proposition}\label{PROP:UpVp}
Let $s\in\R$ and $r\in[1;\infty]$. Then the following hold:\\
\textup{(i)} We have the continuous embeddings
\begin{align*}
U^2_{W\L}(I)H^s(\T^d) \subset \X^{s}(I)\subset \Y^{s}(I)\subset V^2_{W\L}(I)H^s(\T^d).
\end{align*}

\noi
\textup{(ii)} For any $T>0$ and $\psi\in H^s(\T^d)$, we have $e^{-(W_t-W_0)\L}\psi\in \X^s_T$ and
\begin{align*}
\|e^{-(W_t-W_0)\L}\psi\|_{\X^s_T}\les \|\psi\|_{H^s}.
\end{align*}

\noi
\textup{(iii)} For any $T>0$ and $F\in L^1([0;T);H^s(\T^d))$, we have $t\mapsto \ind_{[0;\infty)}(t)\int_0^te^{-(W_t-W_s)\L}F(s)ds\in \X^s_T$ and
\begin{align*}
\Big\|\int_0^te^{-(W_t-W_s)\L}F(s)ds\Big\|_{\X^{s}_T}\le \sup_{\substack{w\in \Y^{-s}_T\\\|w\|_{\Y^{-s}_T}\le 1}}\Big|\int_0^T\int_{\T^d} F(t,x)\cj w(t,x)dxdt\Big|.
\end{align*}
\end{proposition}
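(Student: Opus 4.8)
The plan is to transfer everything to the unmodulated spaces via the observation that, since $\L$ is skew-adjoint (\ref{A1}), the propagator $e^{W_t\L}$ is unitary on $L^2(\T^d)$ and commutes with every Littlewood--Paley projection $\P_N$. Thus $\|\P_N u\|_{U^2_{W\L}(I)L^2}=\|\P_N v\|_{U^2(I)L^2}$ with $v:=e^{W_t\L}u$, and likewise for $V^2$, so all three claims reduce to facts about the scalar spaces collected in Propositions~\ref{PROP:UpVp0} and~\ref{PROP:duality}. For part (i), the central embedding $\X^s(I)\subset\Y^s(I)$ is term-by-term: applying Proposition~\ref{PROP:UpVp0}~(iv) to each $\P_N v$ gives $\|\P_N v\|_{V^2L^2}\les\|\P_N v\|_{U^2L^2}$, and summing against $N^{2s}$ yields $\|u\|_{\Y^s}\les\|u\|_{\X^s}$. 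The two outer embeddings are square-function estimates. For $U^2_{W\L}(I)H^s\subset\X^s(I)$ I would expand $v$ into $H^s$-atoms $v=\sum_j\lambda_jA_j$; since $\P_N$ sends an atom $A=\sum_k\ind_{[t_{k-1};t_k)}\psi_k$ to $(\sum_k\|\P_N\psi_k\|_{L^2}^2)^{1/2}$ times an $L^2$-atom, one gets $\sum_N N^{2s}\|\P_N A\|_{U^2L^2}^2\le\sum_k\|\psi_k\|_{H^s}^2\le1$, and Minkowski's inequality in $\ell^2_N$ upgrades this to general $v$. For $\Y^s(I)\subset V^2_{W\L}(I)H^s$ I would instead use the $L^2$-orthogonality of the pieces: for any partition $\{t_k\}$,
\begin{align*}
\sum_k\|v(t_k)-v(t_{k-1})\|_{H^s}^2\sim\sum_N N^{2s}\sum_k\|\P_N(v(t_k)-v(t_{k-1}))\|_{L^2}^2\le\sum_N N^{2s}\|\P_N v\|_{V^2L^2}^2,
\end{align*}
and taking the supremum over partitions gives $\|u\|_{V^2_{W\L}H^s}\les\|u\|_{\Y^s}$.

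Part (ii) is immediate from the same reduction: for $u(t)=e^{-(W_t-W_0)\L}\psi$ one has $v\equiv e^{W_0\L}\psi$, a time-independent function, so each $\P_N v\equiv\P_N e^{W_0\L}\psi$ is a multiple of a single $U^2(I)L^2$-atom and $\|\P_N u\|_{U^2_{W\L}L^2}=\|\P_N e^{W_0\L}\psi\|_{L^2}=\|\P_N\psi\|_{L^2}$ by unitarity. Summing against $N^{2s}$ gives $\|e^{-(W_t-W_0)\L}\psi\|_{\X^s_T}\sim\|\psi\|_{H^s}$.

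Part (iii) is the crux and combines the $DU^2$--$V^2$ duality of Proposition~\ref{PROP:duality} with an $\ell^2_N$ duality. Set $\mathcal{D}F(t)=\int_0^te^{-(W_t-W_\tau)\L}F(\tau)d\tau$ and $G=e^{W_t\L}\mathcal{D}F=\int_0^te^{W_\tau\L}F(\tau)d\tau$; then each $\P_N G$ is the $U^2$-antiderivative of $\P_N(e^{W_t\L}F)\in L^1\subset DU^2$, so $\|\P_N\mathcal{D}F\|_{U^2_{W\L}L^2}=\|\P_N(e^{W_t\L}F)\|_{DU^2L^2}$. By $\ell^2_N$ duality,
\begin{align*}
\|\mathcal{D}F\|_{\X^s_T}=\sup_{\sum_N b_N^2\le1}\sum_N b_N N^s\|\P_N(e^{W_t\L}F)\|_{DU^2L^2},
\end{align*}
and for each $N$ Proposition~\ref{PROP:duality} lets me pick a near-optimal frequency-localized $v_N=\P_N v_N$ with $\|v_N\|_{V^2L^2}\le1$ realizing the $DU^2$ norm. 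Setting $w=\sum_N b_N N^se^{-W_t\L}v_N$, the unitarity identity $\langle\P_N(e^{W_t\L}F),v_N\rangle_{L^2}=\langle F,\P_N e^{-W_t\L}v_N\rangle_{L^2}$ turns the supremum into $\sup_w\int_0^T\langle F,w\rangle_{L^2}dt$, while frequency orthogonality gives $\P_M w=b_M M^se^{-W_t\L}v_M$ and hence $\|w\|_{\Y^{-s}_T}^2=\sum_M b_M^2\|v_M\|_{V^2L^2}^2\le1$. This yields exactly the claimed bound, with finiteness of $\|\mathcal{D}F\|_{\X^s_T}$ following a posteriori from $\Y^{-s}_T\subset L^\infty H^{-s}$ and $F\in L^1H^s$. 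The main obstacle is the bookkeeping in this last step: one must keep the dyadic pieces frequency-localized, pass the modulation $e^{\pm W_t\L}$ through the pairing without disturbing the $V^2$ norms, and reassemble the per-frequency near-optimizers into a single admissible $w\in\Y^{-s}_T$, the $\eps$-losses from the near-optimal choices being absorbed by taking suprema at the end.
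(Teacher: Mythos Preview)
The paper does not actually prove Proposition~\ref{PROP:UpVp}: it is stated as a collection of standard facts, with the general $U^p/V^p$ machinery deferred to the references \cite{KTV,HHK,HTT,CHT,SchippaThesis} (see the sentence preceding Proposition~\ref{PROP:UpVp0}). Your sketch is correct and is precisely the standard argument one finds in those references, in particular \cite{HTT} for the $\X^s/\Y^s$ setup and \cite{HHK,KTV} for the $DU^2$--$V^2$ duality underlying part~(iii); there is nothing to compare against in the paper itself.
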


\subsubsection{\textbf{Short-time function spaces}}
We now introduce a short-time version of the $U^2$ type spaces of the previous subsubsection. We refer to \cite{SchippaThesis} for the properties of these spaces.
\begin{definition}\label{DEF:F}
\rm~~\\
\textup{(i)} Given $s\in\R$, $\nu>0$, and a time interval $I\subset \R$, the short-time $U^2$ space $\Fb^{s,\nu}(I)$ is the Banach space defined through the norm
\begin{align*}
\|u\|_{\Fb^{s,\nu}(I)}^2:=\sum_N N^{2s}\sup_{\substack{I_N\subset I\\ |I_N|=\min(N^{-\nu};|I|)}}\big\|\P_N u\big\|_{U^2_{W\L}(I_N)L^2_x}^2,
\end{align*}
where the supremum is taken over all subintervals $I_N$ of $I$ of length $|I_N|=\min(N^{-\nu};|I|)$.
~~\\
\textup{(ii)} Given $s\in\R$, $\nu>0$, and a time interval $I\subset \R$, the short-time $DU^2$ space $\Nb^{s,\nu}(I)$ is the Banach space defined through the norm
\begin{align*}
\|u\|_{\Nb^{s,\nu}(I)}^2:=\sum_N N^{2s}\sup_{\substack{I_N\subset I\\ |I_N|=\min(N^{-\nu};|I|)}}\big\|\P_N u\big\|_{DU^2_{W\L}(I_N)L^2_x}^2.
\end{align*}
~~\\
\textup{(iii)} Given $s\in\R$, and a time interval $I\subset \R$, the energy space $\Eb^s(I)$ is the Banach space defined through the norm
\begin{align*}
\|u\|_{\Eb^{s}(I)}^2:=\sum_N N^{2s}\big\|\P_N u\big\|_{L^\infty_IL^2_x}^2.
\end{align*}
\end{definition}

\begin{remark}
\rm From Proposition~\ref{PROP:UpVp0}~(ii) we see that $\|u\|_{\Fb^{s,\nu}_T}\le\|u\|_{\X^s_T}$ for any $s\in\R$ and $\nu\ge 0$.
\end{remark}

These spaces enjoy the following linear estimates.
\begin{lemma}\label{LEM:FNE}
Let $u$ be a solution to the linear equation 
\begin{align*}
\dt u +\frac{dW_t}{dt}\L u=f.
\end{align*}  
Then for $s\in\R$, $\nu>0$, and a time interval $I\subset \R$, it holds
\begin{align}\label{linear}
\|u\|_{\Fb^{s,\nu}(I)}\les \|u\|_{\Eb^s(I)}+\|f\|_{\Nb^{s,\nu}(I)}.
\end{align}
Moreover,
\begin{align}\label{embedding}
\|u\|_{L^\infty_I H^s}\les \|u\|_{\Eb^s_T}\les \|u\|_{\Fb^{s,\nu}(I)}.
\end{align}
\end{lemma}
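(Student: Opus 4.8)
The plan is to prove both displays frequency by frequency and then reassemble the dyadic square sums, exploiting that, after conjugation by the modulated flow, the whole construction is the usual short-time $U^2/DU^2$ machinery. Since $\L$ is a Fourier multiplier it commutes with $\P_N$, so writing $u_N=\P_N u$ and $f_N=\P_N f$ the equation decouples into $\dt u_N+\frac{dW_t}{dt}\L u_N=f_N$ for each dyadic $N$. Passing to the interaction representation by the very definition of a solution to the modulated equation (equivalently, of the adapted spaces), $v_N:=e^{W_t\L}u_N$ satisfies $\dt v_N=e^{W_t\L}f_N=:g_N$, and one has $\|u_N\|_{U^2_{W\L}(J)L^2}=\|v_N\|_{U^2(J)L^2}$ and $\|f_N\|_{DU^2_{W\L}(J)L^2}=\|g_N\|_{DU^2(J)L^2}$ for every subinterval $J$. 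Thus everything reduces to flat $U^2/DU^2$ estimates for $v_N$, together with the fact that $e^{W_t\L}$ is an isometry of $L^2(\T^d)$ (as $\L$ is skew-adjoint).

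For \eqref{linear}, I would fix a dyadic $N$ and an admissible subinterval $I_N=[a_N;b_N)\subset I$ with $|I_N|=\min(N^{-\nu};|I|)$, and invoke the standard linear estimate for the solution of $\dt v_N=g_N$ in the $U^2/DU^2$ framework (a direct consequence of Definition~\ref{DEF:DU}, obtained by splitting $v_N$ into its initial value $v_N(a_N)$ and the Duhamel antiderivative of $g_N$ vanishing at $a_N$):
\begin{equation*}
\|v_N\|_{U^2(I_N)L^2}\les \|v_N(a_N)\|_{L^2}+\|g_N\|_{DU^2(I_N)L^2}.
\end{equation*}
Translating back and using the $L^2$-isometry, the first term equals $\|u_N(a_N)\|_{L^2}\le \|u_N\|_{L^\infty_IL^2}$ (since $a_N\in I$) and the second equals $\|f_N\|_{DU^2_{W\L}(I_N)L^2}$. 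Taking the supremum over all admissible $I_N$, squaring, multiplying by $N^{2s}$, summing in $N$, and using $(a+b)^2\le 2a^2+2b^2$ yields $\|u\|_{\Fb^{s,\nu}(I)}^2\les \|u\|_{\Eb^s(I)}^2+\|f\|_{\Nb^{s,\nu}(I)}^2$, which is \eqref{linear}.

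For \eqref{embedding}, the first inequality $\|u\|_{L^\infty_IH^s}\les\|u\|_{\Eb^s(I)}$ is immediate from the Littlewood--Paley characterization of $H^s$ and the elementary bound $\sup_t\sum_N\le\sum_N\sup_t$: indeed $\|u\|_{L^\infty_IH^s}^2\sim \sup_{t}\sum_N N^{2s}\|\P_Nu(t)\|_{L^2}^2\le \sum_N N^{2s}\|\P_Nu\|_{L^\infty_IL^2}^2=\|u\|_{\Eb^s(I)}^2$. For the second inequality I would prove the frequency-localized bound $\|u_N\|_{L^\infty_IL^2}\le \sup_{I_N}\|u_N\|_{U^2_{W\L}(I_N)L^2}$ and then sum. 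Using the isometry together with the embedding $U^2\subset L^\infty$ from Proposition~\ref{PROP:UpVp0}~(iii), one has $\|u_N\|_{L^\infty(J)L^2}=\|v_N\|_{L^\infty(J)L^2}\le\|v_N\|_{U^2(J)L^2}=\|u_N\|_{U^2_{W\L}(J)L^2}$ for any subinterval $J$. When $|I|\le N^{-\nu}$ the only admissible $J$ is $I$ itself and we are done; when $|I|>N^{-\nu}$, every $t\in I$ lies in some admissible $I_N\subset I$ of length $N^{-\nu}$ (sliding the window to the left near the right endpoint of $I$), whence $\|u_N(t)\|_{L^2}\le\sup_{I_N}\|u_N\|_{U^2_{W\L}(I_N)L^2}$ for all $t$, giving the claim after taking the supremum in $t$ and performing the dyadic sum.

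Once the adapted/flat dictionary is in place the argument is essentially bookkeeping, so there is no genuine analytic obstacle; the two points requiring care are the following. First, one must invoke the \emph{inhomogeneous} linear $U^2/DU^2$ estimate with the nonzero initial value $v_N(a_N)$: it is precisely this term that produces the energy contribution $\|u\|_{\Eb^s}$ rather than a pure Duhamel bound, and under the normalization of Definition~\ref{DEF:DU} the initial value must be separated out by hand. Second, the covering step in the second embedding must be checked carefully at the right boundary: for $t$ close to $b=\sup I$ one chooses $I_N=[\,b-N^{-\nu};b\,)$, which is admissible, lies in $I$, and still contains $t$, ensuring that no time is left uncovered by a subinterval of the prescribed length.
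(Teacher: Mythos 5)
Your proof is correct and follows essentially the same route as the paper: frequency-localize, write the Duhamel/interaction-representation decomposition on each admissible subinterval $I_N$, bound the free part via Proposition~\ref{PROP:UpVp}~(ii) by $\|\P_N u\|_{L^\infty_I L^2}$ and the forcing via Definition~\ref{DEF:DU}, then square-sum in $N$; the embedding \eqref{embedding} likewise comes from Minkowski and $U^2\subset L^\infty$ on each short interval exactly as in the paper (which defers the details to the reference on short-time spaces). Your explicit treatment of the covering near the right endpoint is a detail the paper leaves implicit, but it is the standard bookkeeping and introduces no new idea.
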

\begin{proof}
The estimate \eqref{linear} simply follows from writing the Duhamel formula on each time interval $I_{N_0}=[a;b)\subset[0;T)$ of length $|I_{N_0}|=N_0^{-\nu}\wedge T$ for a dyadic integer $N_0$, and using Proposition~\ref{PROP:UpVp}~(ii) to estimate the linear term, and Definition~\ref{DEF:DU} for the $U^2$ norm of $\int_a^te^{W_\tau\L}f(\tau)d\tau$.

The first part of estimate \eqref{embedding} follows from Minkowski's inequality, while the second one from Proposition~\ref{PROP:UpVp0}~(iii) on each time interval $I_{N_0}\subset[0;T)$, $N_0$ dyadic integer. We refer to \cite{SchippaThesis} for the details.
\end{proof}

\section{The strongly non-resonant case}\label{SEC:NR}
In this section, we give the proof of Theorem~\ref{THM:LWP-NR}. It relies on the following multilinear estimate.
\begin{lemma}\label{LEM:Multi}
Assume that {\ref{A0}} holds for some $\gamma>\frac12$ and $\rho>0$, and let $\NN$ satisfy {\ref{A3}-\ref{A4}$_{\beta}$-\ref{A5}$_\alpha$}. Then for any interval $I\subset \R$, any $N_0,\dots,N_m$ dyadic integers, and any $u_j\in U^2_{W\L}(I)L^2$, $j=1,...,m$, and $u_0\in V^2_{W\L}(I)L^2$, it holds
\begin{multline}\label{multi0}
\Big|\int_I\int_{\T^d}\NN(\P_{N_1}u_1,\dots,\P_{N_m}u_m)\cj{\P_{N_0}u_0} dxdt\Big|\\\les |I|^\gamma (N_0^*)^{\beta_{j_0}+\beta_{j_1}-\alpha_1\rho}(N_2^*)^{\beta_{j_2}+\frac{d}2-\alpha_2\rho}\Big(\prod_{j=3}^m(N_j^*)^{\beta_j+\frac{d}2}\Big)\\\times\|\P_{N_0}u_0\|_{V^2_{W\L}(I)L^2_x}\prod_{j=1}^m\|\P_{N_j}u_j\|_{U^2_{W\L}(I)L^2_x}.
\end{multline}
\end{lemma}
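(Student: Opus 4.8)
The plan is to pass to the interaction representation, isolate a scalar oscillatory driver governed by $W_t$, and estimate it through the duality between $DU^2$ and $V^2$ of Proposition~\ref{PROP:duality}, converting the $(\rho,\gamma)$-irregularity of the occupation measure into a frequency gain via the non-resonance bound~\eqref{NR}. First I would set $v_j=e^{W_t\L}u_j$, so that the hypothesis $\P_{N_j}u_j\in U^2_{W\L}(I)L^2$ (resp. $V^2_{W\L}$) becomes $\P_{N_j}v_j\in U^2(I)L^2$ (resp. $V^2$), with identical norms since $\P_{N}$ commutes with the Fourier multiplier $e^{W_t\L}$. Expanding $\NN$ by~\ref{A3} and using Plancherel, the space--time integral becomes
\[
\sum_{\substack{(k_0,\dots,k_m)\in\Gamma^{m+1}(\{\pm_j\})\\ |k_j|\sim N_j}}\widehat\NN(k_0,\dots,k_m)\int_I e^{-iW_t\Phi_{\L,\NN}(k_0,\dots,k_m)}\Big(\prod_{j=1}^m(\widehat v_j)_{k_j}^{\pm_j}(t)\Big)\overline{(\widehat v_0)_{k_0}(t)}\,dt,
\]
so that the whole mechanism sits in the scalar factor $e^{-iW_t\Phi}$, where $\Phi=\Phi_{\L,\NN}$ is the resonance function~\eqref{resonance}.

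The key input is the \emph{driver estimate}. For a fixed tuple I would study $\Psi_\Phi(t):=\int_a^t e^{-iW_\tau\Phi}\,d\tau$ on $I=[a;b)$. Applying the occupation time formula~\eqref{occupation} (extended to complex integrands by linearity) to the increment on $[s;t]$ gives $\Psi_\Phi(t)-\Psi_\Phi(s)=\int_\R e^{-iz\Phi}\big(\tfrac{d\mu_{[0;t]}}{dz}-\tfrac{d\mu_{[0;s]}}{dz}\big)(z)\,dz$, which is exactly the $z$-Fourier transform of the local-time increment evaluated at $\Phi$. Assumption~\ref{A0} then yields $|\Psi_\Phi(t)-\Psi_\Phi(s)|\les\jb{\Phi}^{-\rho}|t-s|^\gamma$, i.e. $[\Psi_\Phi]_{C^\gamma(I)}\les\jb{\Phi}^{-\rho}$. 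Since $\gamma>\frac12$ we have $1/\gamma<2$, so this $C^\gamma$-regularity places $\Psi_\Phi$ (vanishing at $a$) in a $p$-variation space with $p=1/\gamma$ and norm $\les[\Psi_\Phi]_{C^\gamma}|I|^\gamma$; the embedding of Proposition~\ref{PROP:UpVp0}~(iv) then gives $\|\Psi_\Phi\|_{U^2(I)}\les|I|^\gamma\jb{\Phi}^{-\rho}$, equivalently $\|e^{-iW_t\Phi}\|_{DU^2(I)\C}\les|I|^\gamma\jb{\Phi}^{-\rho}$ by Definition~\ref{DEF:DU}.

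Next I would pair the driver against the remaining scalar factor. By Proposition~\ref{PROP:duality} the inner time integral is at most $\|e^{-iW_t\Phi}\|_{DU^2}$ times the $V^2(I)\C$-norm of $\prod_{j\ge1}(\widehat v_j)^{\pm_j}_{k_j}\,\overline{(\widehat v_0)_{k_0}}$. Since $V^2(I)\C$ is stable under products ($\|fg\|_{V^2}\les\|f\|_{V^2}\|g\|_{V^2}$) and $U^2\subset V^2$ by Proposition~\ref{PROP:UpVp0}~(iii), this $V^2$-norm is $\les\|(\widehat v_0)_{k_0}\|_{V^2}\prod_{j\ge1}\|(\widehat v_j)_{k_j}\|_{U^2}$. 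Feeding in~\ref{A4}$_\beta$ and the non-resonance bound~\eqref{NR}, which gives $\jb{\Phi}^{-\rho}\les(N_0^*)^{-\alpha_1\rho}(N_2^*)^{-\alpha_2\rho}$ \emph{uniformly} over the frequency block, the single-tuple contribution is
\[
\les |I|^\gamma(N_0^*)^{-\alpha_1\rho}(N_2^*)^{-\alpha_2\rho}\Big(\prod_{j=0}^m\jb{k_j}^{\beta_j}\Big)\|(\widehat v_0)_{k_0}\|_{V^2}\prod_{j=1}^m\|(\widehat v_j)_{k_j}\|_{U^2}.
\]

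It then remains to sum over $|k_j|\sim N_j$ with $\sum_j\pm_jk_j=0$. Pulling out the uniform resonance gain and, using $N_0^*\sim N_1^*$ from~\eqref{NR}, rewriting $\prod_j\jb{k_j}^{\beta_j}\sim(N_0^*)^{\beta_{j_0}+\beta_{j_1}}(N_2^*)^{\beta_{j_2}}\prod_{j\ge3}(N_j^*)^{\beta_j}$, I would sum each of the $m-1$ lowest modes by Cauchy--Schwarz, which costs one factor $N_j^{d/2}$ (as $\#\{|k|\sim N_j\}\sim N_j^d$) and reassembles into a block norm, while the two highest modes, linked by the constraint, are summed in $\ell^2$ with no counting loss, yielding $\|\P_{N_0^*}\cdot\|\,\|\P_{N_1^*}\cdot\|$. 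This places $\tfrac d2$ on $N_2^*,\dots,N_m^*$ and none on $N_0^*,N_1^*$, matching the target powers. I expect the main obstacle to be making this last recombination rigorous: single-mode $U^2$-norms do not relate to the block norm by a clean $\ell^2$ identity, so I would first decompose the $U^2$ inputs into atoms, which renders the products piecewise constant in time — thereby legitimizing both the factorization of the time integral through $\Psi_\Phi$ and the $\ell^2$-reassembly into $\|\P_{N_j}u_j\|_{U^2_{W\L}}$ and $\|\P_{N_0}u_0\|_{V^2_{W\L}}$ — and only then perform the sum. Ensuring that exactly the top two frequencies absorb the $-\alpha_1\rho$ gain with no $\tfrac d2$ loss, which hinges on $N_0^*\sim N_1^*$, is the delicate bookkeeping point.
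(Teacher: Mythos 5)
Your high-level mechanism is the right one, and your driver estimate is correct and even somewhat cleaner than what the paper does: the bound $|\Psi_\Phi(t)-\Psi_\Phi(s)|\les \jb{\Phi}^{-\rho}|t-s|^\gamma$ from the occupation time formula and \ref{A0}, upgraded to $\|e^{-iW_t\Phi}\|_{DU^2(I)\C}\les |I|^\gamma\jb{\Phi}^{-\rho}$ via $C^\gamma\subset V^{1/\gamma}\subset U^2$ (using $\gamma>\frac12$), is a valid repackaging of the oscillatory gain. The paper instead reduces to atoms at the outset, so that on each intersection interval the time integral \emph{is} the Fourier transform of a local-time increment; the two viewpoints coincide on piecewise-constant inputs. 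Where your argument has a genuine gap is the reassembly step for the $V^2$ input $u_0$. Your scheme requires, for at least one configuration of the frequencies, a square-function bound of the form $\big(\sum_{k_0}\|(\widehat v_0)_{k_0}\|_{V^2(I)\C}^2\big)^{1/2}\les \|\P_{N_0}v_0\|_{V^2(I)L^2}$ (or its $\ell^1$/Bernstein variant). This is \emph{false}: in $\sum_{k_0}\sup_{\pi}(\cdots)$ the supremum over partitions sits inside the sum, and since refining a partition can strictly decrease a $V^2$ sum over a monotone stretch, one can superpose $O(K)$ coefficient paths oscillating at $K$ separated scales, each of unit $V^2$ norm, whose vector-valued $V^2L^2$ norm stays $O(1)$ while $\sum_{k_0}\|\cdot\|_{V^2}^2\sim K$. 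The analogous bound for the $U^2$ inputs does hold (via atoms and Minkowski), which is why you only half-see the problem.

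Your proposed fix --- ``decompose the inputs into atoms first'' --- does not repair this, because $V^2$ has no atomic decomposition; one must first pay the embedding $V^2_{W\L}(I)L^2\subset U^b_{W\L}(I)L^2$ for some $b>2$ and work with $U^b$-atoms for $u_0$. This is precisely the paper's device, and it is not a cosmetic change: the reassembly in the atom-partition index for $u_0$ is then in $\ell^b$ rather than $\ell^2$, and the factor $|I|^\gamma$ can no longer be extracted from a single global application of the driver estimate; it must come from an $\ell^{1/\gamma}$ (H\"older) summation of the lengths $|I_{k_0,\dots,k_m}|^\gamma$ of the common refinement of the atom partitions, which closes only when $\frac1\gamma\le\min(b',2)$ --- this is exactly where the hypothesis $\gamma>\frac12$ is spent in the paper's version, and it constrains the admissible $b$. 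In short: single-tuple estimate and frequency H\"older/Bernstein are fine, but the passage from scalar Fourier-coefficient norms back to the block norms $\|\P_{N_0}u_0\|_{V^2_{W\L}}$ and $\prod_j\|\P_{N_j}u_j\|_{U^2_{W\L}}$ is the actual content of the proof, and as written your argument does not close there.
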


\begin{proof}
From assumption \ref{A3}, we can first rewrite
\begin{align*}
\int_I\int_{\T^d}\NN(\P_{N_1}u_1,\dots,\P_{N_m}u_m)\cj{u_0} dxdt = \int_I\sum_{\Gamma^{m+1}(\{\pm_j\})}\widehat{\NN}(k_0,\dots,k_m)\prod_{j=0}^{n}\widehat{\P_{N_j}u_j}^{\pm_j}  dt.
\end{align*}

From the multilinearity of the expression above, the embedding $V^2_{W\L}(I)L^2(\T^d)\subset U^b_{W\L}(I)L^2(\T^d)$ with $2<b$ such that $b'<\frac1\gamma$, and the atomic structure of $U^2$ and $U^b$, it suffices to consider the case where for $j=1,...,m$, $u_j$ are $U^2_{W\L}(I)L^2(\T^d)$-atoms, and $u_0$ is a $U^b_{W\L}(I)L^2(\T^d)$-atom:
\begin{align*}
u_j(t,x)=\sum_{k=1}^{K^{(j)}}\mathbf{1}_{[t_{k-1}^{(j)};t_k^{(j)})}e^{W_t\L}\psi_k^{(j)},\qquad \{t_k^{(j)}\}_k\in\ZZ(I),\qquad\|\psi_k^{(j)}\|_{\ell^2_kL^2_x}\le 1
\end{align*}
for any $j=1,\dots,m$, and
\begin{align*}
u_0(t,x)=\sum_{k=1}^{K^{(0)}}\mathbf{1}_{[t_{k-1}^{(0)};t_k^{(0)})}e^{W_t\L}\psi_k^{(0)}, \qquad \{t_k^{(0)}\}_k\in\ZZ(I),\qquad \|\psi_k^{(0)}\|_{\ell^b_k L^2_x}\le 1.
\end{align*}

Then, for given $k_0,\dots,k_{n}$, letting 
\begin{align}\label{Ik}
I_{k_0,\dots,k_{n}}=\bigcap_{j=0}^{m}[t_{k_j-1}^{(j)};t_{k_j}^{(j)}),
\end{align}
using Plancherel theorem and the occupation time formula \eqref{occupation}, we estimate
\begin{align*}
&\Big|\int_{I_{k_0,\dots,k_{m}}}\sum_{\Gamma^{m+1}(\{\pm_j\})}\widehat{\NN}(k_0,\dots,k_m)\prod_{j=0}^{n}\widehat{\P_{N_j}e^{W_t\L}\psi_{k_{j}}^{(j)}}^{\pm_{j}} dt\Big|\\
&\qquad\qquad=\Big|\sum_{\Gamma^{m+1}(\{\pm_j\})}\int_{I_{k_0,\dots,k_{m}}}e^{iW_t\sum_{j=0}^m\pm_j\varphi(k_j)}dt\\
&\qquad\qquad\qquad\qquad\times\widehat{\NN}(k_0,\dots,k_m)\prod_{j=0}^{n}\widehat{\P_{N_j}\psi_{k_{j}}^{(j)}}^{\pm_{j}} \Big|\\
&\qquad\qquad= \Big|\sum_{\Gamma^{m+1}(\{\pm_j\})}\widehat{\frac{d\mu_{I_{k_0,\dots,k_m}}}{dz}}\Big(\sum_{j=0}^m\pm_j\varphi(k_j)\Big)\\
&\qquad\qquad\qquad\qquad\times\widehat{\NN}(k_0,\dots,k_m)\prod_{j=0}^{n}\widehat{\P_{N_j}\psi_{k_{j}}^{(j)}}^{\pm_{j}} \Big|\\
&\qquad\qquad\les |I_{k_0,\dots,k_{m}}|^\gamma \sum_{\Gamma^{m+1}(\{\pm_j\})}\jb{\sum_{j=0}^m\pm_j\varphi(k_j)}^{-\rho}\\
&\qquad\qquad\qquad\qquad\times|\widehat{\NN}(k_0,\dots,k_m)|\prod_{j=0}^{n}\big|\widehat{\P_{N_j}\psi_{k_{j}}^{(j)}}^{\pm_{j}}\big| \\
& \qquad\qquad\les |I_{k_0,\dots,k_{m}}|^\gamma (N_0^*)^{-\alpha_1\rho}(N_2^*)^{-\alpha_2\rho}\prod_{j=0}^mN_j^{\beta_j}\\
&\qquad\qquad\qquad\qquad\times\sum_{\Gamma^{m+1}(\{\pm_j\})}\prod_{j=0}^{m}\big|\widehat{\P_{N_j}\psi_{k_{j}}^{(j)}}^{\pm_{j}}\big| ,
\end{align*}
where the last two steps follow from assumptions \ref{A0}-\ref{A4}$_{\beta}$-\ref{A5}$_{\alpha}$. Using H\"older and Bernstein inequalities, we thus find the bound
\begin{align}\label{multi1}
&\sum_{k_0,\dots,k_m}|I_{k_0,\dots,k_{m}}|^\gamma (N_0^*)^{-\alpha_1\rho}(N_2^*)^{-\alpha_2\rho}\Big(\prod_{j=0}^mN_j^{\beta_j}\Big)\notag\\
&\qquad\qquad\times\big\|\widehat{\P_{N_0^*}\psi_{k_{j_0}}^{(j_0)}}\big\|_{L^2_x}\big\|\widehat{\P_{N_1^*}\psi_{k_{j_1}}^{(j_1)}}\big\|_{L^2_x}\prod_{\ell=2}^{m}(N_\ell^*)^{\frac{d}2}\big\|\widehat{\P_{N_\ell^*}\psi_{k_{j_\ell}}^{(j_\ell)}}\big\|_{L^2_x},
\end{align}
where for $\ell=0,\dots,m$, 
\begin{align}\label{jl}
j_\ell\in\{0,\dots,m\} \text{ is such that }N_{j_\ell}=N_\ell^*.
\end{align}

Note that $N_0^*\sim N_1^*$ for the left-hand side of \eqref{multi0} to be non-zero. Using H\"older inequality together with our choice of $b$ to sum on $k_j$'s, this yields the following bound on \eqref{multi1}:
\begin{align*}
&(N_0^*)^{\beta_{j_0}+\beta_{j_1}-\alpha_1\rho}(N_2^*)^{\beta_{j_2}+\frac{d}2-\alpha_2\rho}\Big(\prod_{j=3}^m(N_j^*)^{\beta_j+\frac{d}2}\Big)\Big\||I_{k_0,\dots,k_{m}}|^\gamma\Big\|_{\ell^{b'}_{k_0}\ell^2_{k_1,\dots,k_m}}\\
&\qquad\qquad\qquad\times\|\P_{N_0}\psi_{k_{0}}^{(0)}\|_{\ell^b_{k_0}L^2_x}\prod_{j=1}^{m}\|\P_{N_j}\psi_{k_{j}}^{(j)}\|_{\ell^2_{k_{j}}L^2_x}\\
&\qquad\les (N_0^*)^{\beta_{j_0}+\beta_{j_1}-\alpha_1\rho}(N_2^*)^{\beta_{j_2}+\frac{d}2-\alpha_2\rho}\Big(\prod_{j=3}^m(N_j^*)^{\beta_j+\frac{d}2}\Big)\Big\||I_{k_0,\dots,k_{m}}|^\gamma\Big\|_{\ell^{\frac1\gamma}_{k_0,\dots,k_m}}\\
&\qquad\qquad\qquad\times\|\P_{N_0}\psi_{k_{0}}^{(0)}\|_{\ell^b_{k_0}L^2_x}\prod_{j=1}^{m}\|\P_{N_j}\psi_{k_{j}}^{(j)}\|_{\ell^2_{k_{j}}L^2_x}\\
&\qquad\sim |I|^\gamma (N_0^*)^{\beta_{j_0}+\beta_{j_1}-\alpha_1\rho}(N_2^*)^{\beta_{j_2}+\frac{d}2-\alpha_2\rho}\Big(\prod_{j=3}^m(N_j^*)^{\beta_j+\frac{d}2}\Big)\\
&\qquad\qquad\qquad\times\|\P_{N_0}\psi_{k_{0}}^{(0)}\|_{\ell^b_{k_0}L^2_x}\prod_{j=1}^{m}\|\P_{N_j}\psi_{k_{j}}^{(j)}\|_{\ell^2_{k_{j}}L^2_x},
\end{align*}
where we used that
$$\sum_{k_0,\dots,k_m}|I_{k_0,\dots,k_m}|=T$$ from the definition \eqref{Ik} of $I_{k_0,\dots,k_m}$.

This finally proves \eqref{multi0} due to the atomic structure of $U^2_{W\L}(I)L^2(\T^d)$ and $U^b_{W\L}(I)L^2(\T^d)\supset V^2_{W\L}(I)L^2(\T^d)$.
\end{proof}
Summing the previous multilinear estimate on the $N_j$'s leads to the following.
\begin{proposition}\label{PROP:multi}
Let $s\in\R$, $\NN$ satisfy {\ref{A3}-\ref{A4}$_{\beta}$-\ref{A5}$_{\alpha}$} and assume that {\ref{A0}} holds for some $\gamma>\frac12$ and $\rho$ satisfying {\ref{Arho}$_{\alpha,\beta,s}$}.\\
Then for any $\sigma \ge s$, there is $C(\sigma)>0$ non-decreasing such that for any $T>0$, any $u_j\in C([0;T);H^\sigma(\T^d))\cap \X^\sigma_T$, $j=1,...,m$, it holds
\begin{align}\label{multi00}
\Big\|\int_0^te^{-(W_t-W_{t'})\L}\NN(u_1,\dots,u_m)dt'\Big\|_{\X^\sigma_T}\le C(\sigma)T^\gamma \sum_{j=1}^m\|u_j\|_{\X^\sigma_T}\prod_{j'\neq j}\|u_j\|_{\X^s_T}.
\end{align}
\end{proposition}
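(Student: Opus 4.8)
The plan is to deduce Proposition~\ref{PROP:multi} from the single-frequency-block estimate of Lemma~\ref{LEM:Multi} by duality and a dyadic summation. First I would invoke the duality relation of Proposition~\ref{PROP:UpVp}~(iii), which bounds the $\X^\sigma_T$ norm of the Duhamel term $\int_0^t e^{-(W_t-W_{t'})\L}\NN(u_1,\dots,u_m)\,dt'$ by the supremum over $w\in\Y^{-\sigma}_T$ with $\|w\|_{\Y^{-\sigma}_T}\le 1$ of the space-time pairing $\big|\int_0^T\int_{\T^d}\NN(u_1,\dots,u_m)\,\cj w\,dx\,dt\big|$. Decomposing each $u_j$ ($j=1,\dots,m$) and the dual function $w=:u_0$ into Littlewood--Paley pieces $\P_{N_j}u_j$, this pairing becomes a sum over dyadic tuples $(N_0,\dots,N_m)$ of exactly the multilinear expressions estimated in Lemma~\ref{LEM:Multi}. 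Using the embedding $\Y^{-\sigma}_T\subset V^2_{W\L}(I)H^{-\sigma}$ from Proposition~\ref{PROP:UpVp}~(i) to place $u_0$ in the right space, Lemma~\ref{LEM:Multi} gives, for each tuple, the bound $T^\gamma$ times a product of dyadic weights and the $V^2/U^2$ norms of the blocks.

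Next I would carry out the dyadic summation. After inserting the Sobolev weights $\langle N_j\rangle^{\sigma}$ (and $\langle N_0\rangle^{-\sigma}$ for the dual piece), the task reduces to summing the purely dyadic factor
\begin{align*}
(N_0^*)^{\beta_{j_0}+\beta_{j_1}-\alpha_1\rho-\sigma+\sigma_0}(N_2^*)^{\beta_{j_2}+\frac{d}2-\alpha_2\rho}\prod_{j=3}^m(N_j^*)^{\beta_j+\frac{d}2}
\end{align*}
against the $\ell^2$-type norms $\|\P_{N_j}u_j\|_{U^2_{W\L}}$ and $\|\P_{N_0}u_0\|_{V^2_{W\L}}$, where the Sobolev exponents are distributed according to the rearrangement. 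Here I would treat the frequencies in decreasing order: since $N_0^*\sim N_1^*$ (the high-high constraint recorded at the end of \ref{A5}), the two largest frequencies carry the $\sigma$ weight and the factor $(N_0^*)^{-\alpha_1\rho}$, while each remaining block $\ell=2,\dots,m$ must be summed using a geometric decay provided by a negative net exponent. Precisely, the five lines of condition~\ref{Arho}$_{\alpha,\beta,s}$ are exactly the constraints guaranteeing that each partial sum over a ``medium/low'' frequency $N_\ell^*$ (and the two top frequencies) converges: the bounds involving $\alpha_1^{-1}$ control the summation in the two highest frequencies, and those involving $(\alpha_1+\alpha_2)^{-1}$ control the nested sums over the lower frequencies where both the $\alpha_1$ and $\alpha_2$ gains from \eqref{NR} are available. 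I would organize this as a sum over which index realizes the maximal frequency and over the cutoff level $L$ separating ``summable'' from ``top'' frequencies, matching the two families of conditions in \ref{Arho}$_{\alpha,\beta,s}$ indexed by $\ell_0$ and $L$.

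To obtain the asymmetric conclusion \eqref{multi00}---one factor in $\X^\sigma_T$ and the rest in $\X^s_T$---I would, after assigning the full regularity $\sigma$ to whichever input carries the largest frequency and only $s$ to the others, use that $\sigma\ge s$ together with the summability to peel off one $\|u_j\|_{\X^\sigma_T}$ and bound the remaining $m-1$ factors by $\|u_{j'}\|_{\X^s_T}$; summing over which input is the highest yields the sum $\sum_{j=1}^m$. The embeddings $\X^s_T\subset\Y^s_T\subset V^2_{W\L}H^s$ of Proposition~\ref{PROP:UpVp}~(i) let me pass freely between the $U^2$/$V^2$ block norms appearing in Lemma~\ref{LEM:Multi} and the $\X^\sigma_T$, $\X^s_T$ norms via Cauchy--Schwarz in the dyadic variables. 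I expect the main obstacle to be the bookkeeping of the multi-frequency summation: one must verify that the exponents produced after distributing the Sobolev weights are \emph{strictly} negative in every low-frequency variable (hence the strict inequalities in the third and fourth lines of \ref{Arho}), while only \emph{non-strict} negativity is needed in the two highest frequencies where the $\ell^2$ Cauchy--Schwarz in the $N_0^*\sim N_1^*$ pair already supplies convergence; carefully matching each of the five conditions to the corresponding geometric series, over all choices of the index $j_0$ attaining the top frequency and all splittings $L$, is where the delicate combinatorial accounting lies.
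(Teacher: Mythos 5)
Your proposal is correct and follows essentially the same route as the paper: duality via Proposition~\ref{PROP:UpVp}~(iii), Littlewood--Paley decomposition reducing to Lemma~\ref{LEM:Multi}, and then a dyadic summation organized by the position of the output frequency in the decreasing rearrangement, with Cauchy--Schwarz handling the two comparable top frequencies (where non-strict inequalities in \ref{Arho} suffice) and geometric series handling the lower ones (where strict inequalities are needed). The only cosmetic difference is that the paper takes the triangle inequality over $(N_1,\dots,N_m)$ before applying duality, allowing the dual function to depend on these frequencies, whereas you dualize once and decompose the single dual function afterwards; both bookkeepings lead to the same estimate.
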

\begin{proof}
Using the multilinearity of $\NN$ we can decompose dyadically
\begin{align*}
\int_0^te^{-(W_t-W_{t'})\L}\NN(u_1,\dots,u_m)dt'=\sum_{\substack{N_0,\dots,N_m\\N_0^*\sim N_1^*}}\int_0^te^{-(W_t-W_{t'})\L}\P_{N_0}\NN(\P_{N_1}u_1,\dots,\P_{N_m}u_m)dt'.
\end{align*}
Since $u_j$ are in $C([0;T);H^\sigma(\T^d)\cap \X^{\sigma}_T$, the crude bound
\begin{align*}
\Big\|\NN(\P_{N_1}u_1,\dots,\P_{N_m}u_m)\Big\|_{L^1_TH^\sigma}\les T\max(N_1,\dots,N_m)^{\sigma+\beta_0}\prod_{j=1}^mN_j^{\beta_j+\frac{d}2-\sigma}\|\P_{N_j}u\|_{C_TH^\sigma}
\end{align*}
ensures that we can use Proposition~\ref{PROP:UpVp}~(iii). Therefore, we have
\begin{align*}
&\Big\|\int_0^te^{-(W_t-W_{t'})\L}\NN(u_1,\dots,u_m)dt'\Big\|_{\X^\sigma_T}\\
&\qquad\qquad\le \sum_{\substack{N_1,\dots,N_m}}\sup_{v_{\cj{N}}\in \Y^{-\sigma}_T}\Big|\sum_{N_0}\mathbf{1}_{N_0^*\sim N_1^*}\int_0^T\int_{\T^d}\NN(\P_{N_1}u_1,\dots,\P_{N_M}u_m)\cj{\P_{N_0}v_{\cj{N}}}dxdt\Big|,
\end{align*}
where we write $\cj{N}=(N_1,\dots,N_m)$.

Then, since $\P_{N_j}u_j\in U^2_{W\L}(T)L^2$ and $P_{N_0}v_{\cj{N}}\in V^2_{W\L}(T)L^2$, invoking Lemma~\ref{LEM:Multi}, we find that this is bounded by 
\begin{multline}\label{multi01}
T^\gamma\sum_{\substack{N_0,\dots,N_m\\ N_0^*\sim N_1^*}}(N_0^*)^{\beta_{j_0}+\beta_{j_1}-\alpha_1\rho}(N_2^*)^{\beta_{j_2}+\frac{d}2-\alpha_2\rho}\Big(\prod_{j=3}^m(N_j^*)^{\beta_{j_\ell}+\frac{d}2}\Big)\\
\times\|\P_{N_0}v_{\cj{N}}\|_{V^2_{W\L}(T)L^2_x}\prod_{j=1}^m\|\P_{N_j}u_j\|_{U^2_{W\L}(T)L^2_x},
\end{multline}
where we recall that $j_\ell$ are defined by $N_{j_\ell}=N_\ell^*$. It thus remains to perform the summation on the $N_j$'s.

\textbf{Case 1: $N_0\sim N_0^*$.}\\
 Then without loss of generality we can assume that $N_0=N_0^*$, and under the assumption
\begin{align*}
\rho\ge \alpha_1^{-1}\big(\beta_0+\beta_1^*\big),
\end{align*}
we can sum on $N_0$ using Cauchy-Schwartz inequality, to estimate
\begin{align*}
&T^\gamma\sum_{\substack{N_0,\dots,N_m\\N_0= N_0^*\sim N_1^*}}(N_0^*)^{\beta_{0}+\beta_{j_1}-\alpha_1\rho}(N_2^*)^{\beta_{j_2}+\frac{d}2-\alpha_2\rho -s}\Big(\prod_{j=3}^m(N_j^*)^{\beta_j+\frac{d}2-s}\Big)\\
&\qquad\times N_0^{-\sigma}\|\P_{N_0}v_{\cj{N}}\|_{V^2_{W\L}(T)L^2_x}(N_1^*)^{\sigma}\|\P_{N_1^*}u_{j_1}\|_{U^2_{W\L}(T)L^2_x}\prod_{\ell=2}^m(N_\ell^*)^s\|\P_{N_\ell^*}u_{j_\ell}\|_{U^2_{W\L}(T)L^2_x}\\
&\les T^\gamma\|u_{j_1}\|_{\X^\sigma_T}\sum_{\substack{N_j\\j\not\in\{0,j_1\}}}(N_2^*)^{\beta_{0}+\beta_{j_1}+\beta_{j_2}+\frac{d}2-s-(\alpha_1+\alpha_2)\rho}\Big(\prod_{\ell=3}^m(N_\ell^*)^{\beta_{j_\ell}+\frac{d}2-s}\Big)\\
&\qquad\qquad\times(N_2^*)^{s}\|\P_{N_2^*}u_{j_2}\|_{U^2_{W\L}(T)L^2_x}\prod_{\ell=3}^m(N_\ell^*)^{s}\|\P_{N_\ell^*}u_{j_\ell}\|_{U^2_{W\L}(T)L^2_x},
\end{align*}
where we used that ${\displaystyle \|v_{\cj{N}}\|_{\Y^{-\sigma}_T}\le 1}$.

Now provided that
\begin{align*}
\rho>(\alpha_1+\alpha_2)^{-1}\big(\beta_0+\beta_1^*+\sum_{j= 2}^{n}(\beta_j^*+\frac{d}2-s)\big) \text{ for any }2\le n\le m,
\end{align*}
we can continue summing on $N_2^*,\dots,N_m^*$ to bound \eqref{multi01} by
\begin{align}\label{multi0RHS}
T^\gamma\sum_{j=1}^m\|u_j\|_{\X^\sigma_T}\prod_{j'\neq j}\|u_{j'}\|_{\X^s_T}.
\end{align}

\textbf{Case 2: if $N_0\sim N_{2}^*$.}\\
We now have to bound
\begin{multline}\label{multi02}
T^\gamma\sum_{\substack{N_0,\dots,N_m\\ N_0^*\sim N_1^*}}(N_0^*)^{\beta_{j_0}+\beta_{j_1}-\alpha_1\rho-s-\sigma}(N_2^*)^{-\alpha_2\rho+\beta_0+\frac{d}2+\sigma}\Big(\prod_{\substack{\ell=3}}^m(N_\ell^*)^{\beta_{j_\ell}+\frac{d}2-s}\Big)\\
\times N_0^{-\sigma}\|\P_{N_0}v_{\cj{N}}\|_{V^2_{W\L}(T)L^2_x}(N_0^*)^\sigma\|\P_{N_0^*}u_{j_0}\|_{U^2_{W\L}(T)L^2_x}\prod_{\substack{j=1\\j\neq j_0}}^mN_j^s\|\P_{N_j}u_j\|_{U^2_{W\L}(T)L^2_x}.
\end{multline}
Summing on $N_0^*$, we bound \eqref{multi02} by
\begin{multline*}
T^\gamma\sum_{\substack{N_\ell^*, \ell\ge 2}}(N_2^*)^{\beta_{j_0}+\beta_{j_1}+\beta_{0}+\frac{d}2-s-(\alpha_1+\alpha_2)\rho}\Big(\prod_{\substack{\ell=3}}^m(N_\ell^*)^{\beta_{j_\ell}+\frac{d}2-s}\Big)\\
\times \|u_{j_0}\|_{\X^\sigma_T}\|u_{j_1}\|_{\X^s_T}N_0^{-\sigma}\|\P_{N_0}v_{\cj{N}}\|_{V^2_{W\L}(T)L^2_x}\prod_{\substack{\ell=3}}^m(N_\ell^*)^s\|\P_{N_\ell^*}u_{j_\ell}\|_{U^2_{W\L}(T)L^2_x}.
\end{multline*}
provided that
\begin{align*}
\rho \ge \alpha_1^{-1}(\beta_1^*+\beta_2^*-2s).
\end{align*}
If moreover
\begin{align*}
\rho>\big(\alpha_1+\alpha_2)^{-1}\big(\beta_1^*+\beta_2^*+\beta_0+\frac{d}2-s+\sum_{j=3}^n(\beta_j^*+\frac{d}2-s)\big)\text{ for any }3\le n\le m,
\end{align*}
we can keep summing on $N_2^*,\dots,N_m^*$ to bound \eqref{multi02} by \eqref{multi0RHS}.\\
%\noindent\textbf{Subcase (b): if $\sigma<0$.} In this case, we have $N_0^{\sigma}\les (N_m^*)^\sigma$, and so provided that
%\begin{align*}
%\rho>\alpha_1^{-1}\big(\beta_0^*+\beta_1^*-2s\big)\qquad\text{and}\qquad \rho>(\alpha_1+\alpha_2)^{-1}\big(\beta_0^*+\beta_1^*+\sum_{j\ge 2}\max\big(\beta_j^*+\frac{d}2-s;0\big)\big),
%\end{align*}
%we can proceed as above to estimate
%\begin{align*}
%&T^\gamma\sum_{\substack{N_0,\dots,N_m\\N_0^*\sim N_1^*}}(N_0^*)^{\beta_{j_0}+\beta_{j_1}-\alpha_1\rho}(N_2^*)^{\beta_{j_2}+\frac{d}2-\alpha_2\rho}\Big(\prod_{j=3}^m(N_j^*)^{\beta_j+\frac{d}2}\Big)\\
%&\qquad\qquad\qquad\times\|\P_{N_0}u_0\|_{V^2_{W\L}(T)L^2_x}\prod_{j=1}^m\|\P_{N_j}u_j\|_{U^2_{W\L}(T)L^2_x}\\
%&\qquad\les T^\gamma\|u_{j_0}\|_{X^{\sigma}_T}\|u_{j_1}\|_{\X^s_T}\sum_{N_2^*\ge \dots\ge N_m^*}(N_2^*)^{\beta_{j_0}+\beta_{j_1}-(\alpha_1+\alpha_2)\rho+\beta_{j_2}+\frac{d}2-s-\sigma }\|\P_{N_2^*}u_{j_2}\|_{U^2_{W\L}(T)L^2_x}\\
%&\qquad\qquad\times N_0^{\beta_0+\frac{d}2}\|\P_{N_0}u_0\|_{V^2_{W\L}(T)L^2_x}\prod_{\substack{\ell\ge 2\\\ell\neq\ell_0}}(N_{\ell}^*)^{\beta_{j_\ell}+\frac{d}2}\|\P_{N_\ell^*}u_{j_\ell}\|_{U^2_{W\L}(T)L^2_x}\\
%&\qquad\les T^\gamma\sum_{j=1}^m\|u_j\|_{\X^\sigma_T}\prod_{j'\neq j,0}\|u_{j'}\|_{\X^s_T}\\
%&\qquad\qquad\qquad\times\sum_{N_0}(N_0)^{\beta_{j_0}+\beta_{j_1}-(\alpha_1+\alpha_2)\rho+\sum_{\ell\ge 2}\max\big(\beta_{j_\ell}+\frac{d}2-s;0\big)-\sigma}\|\P_{N_0}u_0\|_{V^2_{W\L}(T)L^2_x}\\
%&\qquad\les T^\gamma\|u_0\|_{\Y^{-\sigma}_T}\sum_{j=1}^m\|u_j\|_{\X^\sigma_T}\prod_{j'\neq j}\|u_{j'}\|_{\X^s_T}.
%\end{align*}
\textbf{Case 3: $N_0\les N_3^*$.} The other cases $N_0\sim N_3^*,\dots, N_0\sim N_m^*$ are treated similarly as above, provided that for any $\ell_0\ge 2$ such that $N_0\sim N_{\ell_0}^*$,
\begin{align*}
\begin{cases}
\rho\ge \alpha_1^{-1}\big(\beta_1^*+\beta_2^*-2s),\\
{\displaystyle \rho>(\alpha_1+\alpha_2)^{-1}\big(\beta_1^*+\beta_2^*-2s+\sum_{\ell=2}^{L}(\beta_{\ell+1}^*+\frac{d}2-s)\big) \text{ for any }2\le L\le \ell_0-1,}\\
{\displaystyle \rho>(\alpha_1+\alpha_2)^{-1}\big(\beta_1^*+\beta_2^*+\beta_0+\frac{d}2-s+\sum_{\ell=2}^{L}(\beta_{\ell+1}^*+\frac{d}2-s)\big) \text{ for any }\ell_0+1\le L\le m.}\\
\end{cases}
\end{align*}
This finally shows \eqref{multi00}.
\end{proof}
With Proposition~\ref{PROP:multi} at hand, we can now finish the proof of Theorem~\ref{THM:LWP-NR}.
\begin{proof}[Proof of Theorem~\ref{THM:LWP-NR}]
For $u_0\in H^{s}(\T^d)$, let $R=2\|u_0\|_{H^{s}}$, and $T\in(0;1]$ to be chosen later, and define 
$$B_R(T):=\big\{u\in C([0;T);H^{s}(\T^d))\cap \X^s_T,~~\|u\|_{\X^s_T}\le R\big\}$$
which is closed in $\X^s_T$, and
\begin{align*}
\Psi_{u_0}: u\in \X^s_T\mapsto e^{(W_t-W_0)\L}u_0+ \int_0^t e^{(W_t-W_{\tau})\L}\NN(u)(\tau)d\tau.
\end{align*}
We will show that $\Psi_{u_0}$ defines a contraction on $B_R(T)$. From Proposition~\ref{PROP:UpVp}, the multilinearity of $\NN$, and Proposition~\ref{PROP:multi}, we have for any $u,v\in B_R(T)$:
\begin{align}\label{FixPoint}
\big\|\Psi_{u_0}(u)-\Psi_{v_0}(v)\big\|_{\X^s_T}&\les \|u_0-v_0\|_{H^{s}}+\Big\|\int_0^te^{-(W_t-W_{t'})\L}\big(\NN(u)-\NN(v)\big)dt'\Big\|_{\X^s_T}\notag\\
&\le C\|u_0-v_0\|_{H^{s}} +CT^\gamma \|u-v\|_{\X^s_T}\big(\|u\|_{\X^s_T}+\|v\|_{\X^s_T}\big)^{m-1}.
\end{align}
Thus, taking $T(R)\sim \jb{R}^{-\frac{m-1}\gamma}$ ensures that $\Psi_{u_0}$ defines a contraction on $B_R(T)$, thus providing a mild solution $u\in C([0;T);H^{s}(\T^d))\cap\X^s_T$ to \eqref{EQ} on $[0;T)$, unique in $B_R(T)$. If $v\in C([0;T);H^s(\T^d))\cap \X^s_T$ is another solution, with $R'=\max\big(R;\|v\|_{\X^s_T}\big)$ and $\tau=\tau(R')\le T$, the argument above shows that $v\equiv u$ on $[0;\tau]$. But then iterating this argument on $[\tau;2\tau]$, etc., shows that $v\equiv u$ on the whole time interval where $u$ is defined, showing uniqueness in the whole class $C([0;T);H^s(\T^d))\cap \X^s_T$. The estimate \eqref{FixPoint} also provides the Lipschitz dependence of the flow map on the initial data. At last, using Proposition~\ref{PROP:multi} again with $\sigma\ge s$ yields
\begin{align*}
\|u\|_{\X^\sigma_T}=\|\Psi_{u_0}(u)\|_{\X^\sigma_T} \le C\|u_0\|_{H^\sigma}+CT^\gamma\|u\|_{\X^\sigma_T}\|u\|_{\X^s_T}^{m-1},
\end{align*} 
leading to
\begin{align*}
\|u\|_{\X^\sigma_T}\le 2C\|u_0\|_{H^\sigma}
\end{align*}
with the same choice of $T=T(\|u_0\|_{H^s})$ as above. This shows the persistence of regularity property. 

%At last, if \ref{A2} holds, Lemma~\ref{LEM:L2} below ensures that the $L^2$ norm is also invariant under the flow of \eqref{EQ} we just built, allowing to iterate the local well-posedness in $L^2(\T^d)$
\end{proof}

\section{Perturbation by completely resonant terms}
In this section, we proceed with the proof of Theorem~\ref{THM:LWP-R}. As explained in the introduction, we mainly follow the argument in \cite{OW}.

\subsection{Existence}
We start by proving existence of a mild solution to \eqref{EQ}, under the assumptions of Theorem~\ref{THM:LWP-R}. Thus, fix $s\in\R$ and $\rho>0$ satisfying \ref{Arho*}$_{\alpha,\beta,\beta_\RR,s}$, with $\nu>0$ as in \eqref{nu}. 

We first have the following well-posedness result at higher regularity.
\begin{lemma}\label{LEM:LWP}
Given $s\in\R$, $\NN=\NN_0+\RR$ with $\NN_0$ satisfying {\ref{A3}}-{\ref{A4}$_{\beta}$}-{\ref{A5}$_{\alpha}$}, and $\RR$ satisfying {\ref{A3}}-{\ref{A4}$_{\beta_\RR}$}-\eqref{Resonant}, provided that $\rho$ satisfies {\ref{Arho}$_{\alpha,\beta,s_\RR}$} with $s_\RR=\max(s;\frac{|\beta_\RR|}{m-1})$, then \eqref{EQ} is well-posed in $H^\sigma(\T^d)$ for any $\sigma\ge s_\RR$. More precisely, for any $u_0\in H^\sigma(\T^d)$, there is $T=T(\|u_0\|_{H^{s_\RR}})\in(0;1]$ and a unique mild solution $u\in C([0;T);H^\sigma(\T^d))\cap \X^\sigma_T$ to \eqref{EQ}. Moreover the flow map is Lipschitz continuous.
\end{lemma}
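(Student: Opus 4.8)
The plan is to run the same contraction-mapping argument as in the proof of Theorem~\ref{THM:LWP-NR}, now in the ball
\[
B_R(T)=\big\{u\in C([0;T);H^{s_\RR}(\T^d))\cap\X^{s_\RR}_T:~\|u\|_{\X^{s_\RR}_T}\le R\big\},\qquad R=2\|u_0\|_{H^{s_\RR}},
\]
for the Duhamel map $\Psi_{u_0}$ of \eqref{Duhamel}, and then to upgrade to $\sigma\ge s_\RR$ by persistence of regularity. Splitting $\NN=\NN_0+\RR$, the strongly non-resonant part $\NN_0$ is controlled exactly as before: since $\NN_0$ satisfies \ref{A3}-\ref{A4}$_\beta$-\ref{A5}$_\alpha$ and $\rho$ satisfies \ref{Arho}$_{\alpha,\beta,s_\RR}$, Proposition~\ref{PROP:multi} (applied with $s_\RR$ in place of $s$) provides the $T^\gamma$-contraction contribution in $\X^{s_\RR}_T$ for the $\NN_0$-part, together with its difference and persistence-of-regularity versions.

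The new point is the resonant term $\RR$. On its support one has $k_0=k_1=\dots=k_m$ and $\pm_j=(-1)^j$, so the resonance function \eqref{resonance} vanishes identically there, and the dispersive gain of Lemma~\ref{LEM:Multi} is unavailable. I would therefore estimate $\RR$ directly from its diagonal structure. By \eqref{Resonant} and \ref{A4}$_{\beta_\RR}$ the Fourier coefficients satisfy the pointwise bound
\[
\big|\widehat{\RR(u)}(k)\big|=\big|\ft\RR(k,\dots,k)\big|\,|\widehat u_k|^m\le C\jb{k}^{|\beta_\RR|}|\widehat u_k|^m,
\]
and more generally the associated diagonal $m$-linear form $B$, for which $\RR(u)=B(u,\dots,u)$, obeys $|\widehat{B(f_1,\dots,f_m)}(k)|\le C\jb{k}^{|\beta_\RR|}\prod_{j=1}^m|\widehat{(f_j)}_k|$. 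Bounding $(m-1)$ of the factors by $|\widehat{(f_j)}_k|\le\jb{k}^{-s_\RR}\|f_j\|_{H^{s_\RR}}$ and keeping the $\jb{k}^{\sigma}$ weight on the remaining one, the loss of $|\beta_\RR|$ derivatives is absorbed precisely because $s_\RR\ge\frac{|\beta_\RR|}{m-1}$, yielding
\[
\|B(f_1,\dots,f_m)\|_{H^\sigma}\le C\|f_1\|_{H^\sigma}\prod_{j=2}^m\|f_j\|_{H^{s_\RR}}
\]
(and symmetrically in the slots). Combining this with the duality estimate of Proposition~\ref{PROP:UpVp}~(iii) and the embedding $\Y^{-\sigma}_T\subset L^\infty_TH^{-\sigma}$ from Proposition~\ref{PROP:UpVp}~(i), I obtain
\[
\Big\|\int_0^te^{-(W_t-W_\tau)\L}\RR(u)\,d\tau\Big\|_{\X^\sigma_T}\le CT\,\|u\|_{\X^\sigma_T}\|u\|_{\X^{s_\RR}_T}^{m-1},
\]
whose time factor $T\le T^\gamma$ (for $T\le 1$) is even more favourable than the non-resonant one.

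Adding the two contributions shows that $\Psi_{u_0}$ is a contraction on $B_R(T)$ for $T=T(R)\sim\jb{R}^{-(m-1)/\gamma}$, producing the unique solution in $B_R(T)$; uniqueness in the full class and Lipschitz dependence then follow from the corresponding difference estimates, obtained by expanding $\NN_0(u)-\NN_0(v)$ and $\RR(u)-\RR(v)$ by multilinearity (telescoping) and applying the same bounds. Note that because $\RR$ is a genuine algebraic form there is no symmetry-breaking obstruction to differencing at this level of regularity, which is why one gets Lipschitz continuity here. Finally, re-running the estimates at level $\sigma\ge s_\RR$ with the $m-1$ low-regularity factors measured in $\X^{s_\RR}_T$ gives $\|u\|_{\X^\sigma_T}\le C\|u_0\|_{H^\sigma}+CT^\gamma\|u\|_{\X^\sigma_T}\|u\|_{\X^{s_\RR}_T}^{m-1}$, hence $\|u\|_{\X^\sigma_T}\le 2C\|u_0\|_{H^\sigma}$ with the same $T=T(\|u_0\|_{H^{s_\RR}})$, establishing persistence of regularity. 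I expect the main obstacle to be precisely the resonant term: since its phase is degenerate, the only way to close the estimate is through the derivative count, and the threshold $s_\RR=\max(s;\frac{|\beta_\RR|}{m-1})$ is exactly what is needed to absorb the $|\beta_\RR|$ lost derivatives — explaining why Lemma~\ref{LEM:LWP} is restricted to high regularity, whereas the low-regularity regime of Theorem~\ref{THM:LWP-R} instead requires the energy/short-time method.
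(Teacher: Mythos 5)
Your proposal is correct and follows essentially the same route as the paper: the non-resonant part is handled by Proposition~\ref{PROP:multi} at level $s_\RR$, and the resonant part is estimated directly from its diagonal structure via the pointwise Fourier bound $\jb{k}^{|\beta_\RR|}\prod_j|\widehat{u}_k|$, distributing the $|\beta_\RR|$ derivatives over $m-1$ factors measured in $H^{s_\RR}$ and pairing against $w\in\Y^{-\sigma}_T\subset L^\infty_TH^{-\sigma}$, exactly as in the paper's estimate \eqref{multi3}. The contraction, uniqueness, Lipschitz dependence, and persistence of regularity are then concluded as in Theorem~\ref{THM:LWP-NR}, which is also what the paper does.
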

\begin{proof}
From the assumption on $\NN_0$ and $\RR$, we have by Proposition~\ref{PROP:multi} that for any $\sigma\ge s_\RR$,
\begin{align}\label{multi2}
\Big|\int_0^T\int_{\T^d}\NN_0(u_1,\dots,u_m)\cj{w} dxdt\Big|\les T^\gamma\|w\|_{\Y^{-\sigma}_T}\sum_{j=1}^m\|u_j\|_{\X^\sigma_T}\prod_{j'\neq j}^{n}\|u_{j'}\|_{\X^s_T},
\end{align}
provided that $\rho$ satisfies \ref{Arho}$_{\alpha,\beta,s_\RR}$.

Moreover, from the assumptions on $\RR$ in \ref{A5*}, we have for any $\sigma\ge \frac{|\beta_\RR|}{m-1}$,
\begin{align}\label{multi3}
\Big|\int_0^T\int_{\T^d}\RR(u_1,\dots,u_m)\cj{w} dxdt\Big|&=\Big|\int_0^T\sum_{k_0\in\Z^d}\ft\RR(k_0,\dots,k_0)\big(\prod_{j=1}^m\ft u_j(k_0)^{\pm_j}\big)\cj{\ft w(k_0)}dt\Big|\notag\\
&\les T\Big\|\sum_{k_0\in\Z^d}\jb{k_0}^{|\beta_\RR|}|\ft w(k_0)|\prod_{j=1}^m|\ft u_j(k_0)|\Big\|_{L^\infty_T}\notag\\
&\les T\|w\|_{L^\infty_TH^{-\sigma}}\sum_{j=1}^m\|u_j\|_{L^\infty_T H^\sigma}\prod_{j'\neq j}^m\|u_{j'}\|_{L^\infty_T H^{\frac{|\beta_\RR|}{m-1}}}\notag\\
&\les T\|w\|_{\Y^{-\sigma}_T}\sum_{j=1}^m\|u_j\|_{\X^\sigma_T}\prod_{j'\neq j}\|u_{j'}\|_{\X^{s_\RR}_T}.
\end{align}
Then, with \eqref{multi2}-\eqref{multi3} at hand, the same argument as in the proof of Theorem~\ref{THM:LWP-NR}~(i) in Section~\ref{SEC:NR} above shows well-posedness of \eqref{EQ} in $H^\sigma(\T^d)$ in this case for any $\sigma\ge s_\RR$, with a time of existence $T=T(\|u_0\|_{H^{s_\RR}})$, and Lipschitz continuous dependence on the initial data.
\end{proof}

Lemma~\ref{LEM:LWP} thus establishes Theorem~\ref{THM:LWP-R}~(i). To control the resonant part of the nonlinearity at lower regularity, we make use of the frequency-dependent short-time version $\Fb^{s,\nu}_T$ of $\X^s_T$ in Definition~\ref{DEF:F}. We will then take 
$\nu$ as in \eqref{nu} and $\rho$ depending on $\nu$ as in \ref{Arho*}$_{\alpha,\beta,\beta_\RR,s}$.
\begin{lemma}\label{LEM:N}
Under the assumptions of Lemma~\ref{LEM:LWP}, if moreover $\nu$ is given by \eqref{nu} and $\rho$ satisfies assumption \ref{Arho*}$_{\alpha,\beta,\beta_\RR,s}$, then there exists $0<\theta\ll 1$ such that for any $T>0$, $\sigma\ge s$, and $u_j\in \Fb^{s,\nu}_T$, $j=1,\dots,m$, it holds
\begin{align}\label{N}
\big\|\NN(u_1,\dots,u_m)\big\|_{\Nb^{\sigma,\nu}_T}\les T^\theta \sum_{j=1}^m\|u_j\|_{\Fb^{\sigma,\nu}_T}\prod_{j'\neq j}\|u_{j'}\|_{\Fb^{s,\nu}_T}.
\end{align}
\end{lemma}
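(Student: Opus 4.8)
The plan is to estimate the $\Nb^{\sigma,\nu}_T$ norm frequency-by-frequency and, on each dyadic piece, to reduce to the multilinear form already controlled in Lemma~\ref{LEM:Multi} by duality. Concretely, by Definition~\ref{DEF:F} it suffices to bound $N_0^\sigma\|\P_{N_0}\NN(u_1,\dots,u_m)\|_{DU^2_{W\L}(I_{N_0})L^2_x}$ for each dyadic $N_0$ and each interval $I_{N_0}\subset[0;T)$ of length $L_0=\min(N_0^{-\nu};T)$, and then to take $\ell^2$ in $N_0$. Since $e^{W_t\L}$ is unitary, the duality of Proposition~\ref{PROP:duality} transfers to the adapted spaces, giving $\|f\|_{DU^2_{W\L}(I)L^2_x}=\sup\{|\int_I\langle f,w\rangle dt| : \|w\|_{V^2_{W\L}(I)L^2_x}\le 1\}$. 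Testing against $v=\P_{N_0}w$ reduces the whole estimate to controlling the space-time pairing $\int_{I_{N_0}}\int_{\T^d}\P_{N_0}\NN(u_1,\dots,u_m)\cj v\,dxdt$, exactly of the type handled in Lemma~\ref{LEM:Multi}. I then split $\NN=\NN_0+\RR$ and treat the two parts separately.

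For the non-resonant part $\NN_0$ I would dyadically decompose the inputs and apply Lemma~\ref{LEM:Multi}, paralleling the summation of Proposition~\ref{PROP:multi}, but now on short intervals. If the output frequency $N_0$ is comparable to the top frequency $N_0^*\sim N_1^*$, then $I_{N_0}$ already sits at the correct scale $(N_0^*)^{-\nu}$, and Lemma~\ref{LEM:Multi} applies directly on $I_{N_0}$ with gain $|I_{N_0}|^\gamma\le N_0^{-\nu\gamma}$; the lower-frequency inputs are estimated by their $\Fb^{s,\nu}$-pieces since $I_{N_0}$ is contained in a single interval of their scale, via Proposition~\ref{PROP:UpVp0}~(ii). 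If instead $N_0$ is much smaller than the interacting high frequencies, then $I_{N_0}$ is \emph{longer} than the scale $(N_0^*)^{-\nu}$ of the high-frequency inputs, and one must partition $I_{N_0}$ into $\sim(N_0^*/N_0)^{\nu}$ subintervals of length $(N_0^*)^{-\nu}$. On each subinterval Lemma~\ref{LEM:Multi} applies, the high-frequency inputs are controlled by their $\Fb$-pieces, the lower ones uniformly, and the test function $v$ is square-summable over the partition by superadditivity of the $V^2_{W\L}$-norm; Hölder in the subinterval index then produces a controlled loss, offset by the per-interval time gain $((N_0^*)^{-\nu})^\gamma$. The residual powers of $N_0^*/N_0$ of the form $N^{(1-\gamma)\nu}$ and $N^{(2-\gamma)\nu}$ are precisely what is absorbed by the $\nu$-dependent terms in \ref{Arho*}$_{\alpha,\beta,\beta_\RR,s}$; with these in force the dyadic summation closes exactly as in the three cases of Proposition~\ref{PROP:multi}, now yielding $\Fb^{\sigma,\nu}$ and $\Fb^{s,\nu}$ norms on the right.

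For the resonant part $\RR$ there is no resonance to exploit, and the completely resonant structure \eqref{Resonant} instead forces all inputs to the single output frequency, so that $\P_{N_0}\RR(u_1,\dots,u_m)=\P_{N_0}\RR(\P_{N_0}u_1,\dots,\P_{N_0}u_m)$. Here I would use the crude bound $\|\cdot\|_{DU^2_{W\L}(I_{N_0})L^2_x}\le\|\cdot\|_{L^1_{I_{N_0}}L^2_x}$ (again from unitarity of $e^{W_t\L}$ together with $L^1\subset DU^2$) to gain the full interval length, $\|\P_{N_0}\RR\|_{L^1_{I_{N_0}}L^2_x}\le L_0\sup_t\|\P_{N_0}\RR(t)\|_{L^2_x}$. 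The diagonal sum is estimated by Hölder ($\ell^2_k\subset\ell^{2m}_k$) and \ref{A4}$_{\beta_\RR}$ to give $\|\P_{N_0}\RR(t)\|_{L^2_x}\lesssim N_0^{|\beta_\RR|}\prod_{j=1}^m\|\P_{N_0}u_j(t)\|_{L^2_x}$, and the time-sup is absorbed via the embedding $\Fb^{s,\nu}_T\hookrightarrow L^\infty_TH^s$ from \eqref{embedding}. Placing one factor at regularity $\sigma$ and the remaining $m-1$ at regularity $s$ leaves the scalar weight $L_0\,N_0^{|\beta_\RR|-(m-1)s}$, which is square-summable in $N_0$ precisely because $\nu>|\beta_\RR|-(m-1)s$ by \eqref{nu}; the small surplus lets me extract a power $T^\theta$ from $L_0=\min(N_0^{-\nu};T)$.

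The main obstacle is the partitioning step for $\NN_0$ when the output frequency is far below the top two frequencies: one has to balance the number $\sim(N_0^*/N_0)^{\nu}$ of subintervals against the time gain $\gamma$ per interval and the orthogonality available from the $V^2$ test function, while keeping each factor in the correct short-time space. This is exactly the mechanism that dictates the $\nu(1-\gamma)$ and $\nu(2-\gamma)$ corrections distinguishing \ref{Arho*} from \ref{Arho}, and verifying that these corrections suffice to close every case of the dyadic sum—together with the borderline counting when $L_0$ switches from $N_0^{-\nu}$ to $T$—is the delicate bookkeeping at the heart of the proof.
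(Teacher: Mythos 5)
Your proposal is correct and follows essentially the same route as the paper: duality to reduce to the pairing of Lemma~\ref{LEM:Multi}, the splitting $\NN=\NN_0+\RR$, partitioning $I_{N_0}$ into $\sim(N_0^*/N_0)^\nu$ subintervals at the scale of the highest frequency with relocalization via Proposition~\ref{PROP:UpVp0}~(ii) and the $\nu$-corrected conditions of \ref{Arho*} to close the dyadic sums, and for $\RR$ the crude $L^1_t$ bound on the diagonal combined with \eqref{nu} to gain $T^\theta$ and sum in $N_0$. The only (harmless) deviation is your use of $V^2$-superadditivity and H\"older over the partition index, where the paper simply multiplies the supremum over subintervals by their number.
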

\begin{proof}
From Proposition~\ref{PROP:duality}, the multilinearity of $\NN_0$, and the support property \eqref{Resonant} of $\RR$, we have
\begin{align*}
&\big\|\NN(u_1,\dots,u_m)\big\|_{\Nb^\sigma_T}\\
&=\Big\|N_0^\sigma \sup_{\substack{I_{N_0}\subset [0;T)\\|I_{N_0}|\sim N_0^{-\nu}}}\big\|\P_{N_0}\NN(u_1,\dots,u_m)\big\|_{DU^2_{W\L}(I_{N_0})L^2_x}\Big\|_{\ell^2_{N_0}}\\
&\qquad= \Big\|N_0^\sigma\sup_{\|w\|_{V^2_{W\L}(I_{N_0})L^2_x}\le 1}\big|\int_{I_{N_0}}\int_{\T^d}\Big(\NN_0(u_1,\dots,u_m)+\RR(u_1,\dots,u_m)\Big)\cj{\P_{N_0} w}dxdt\big|\Big\|_{\ell^2_{N_0}}\\
&\qquad\le \Big\|N_0^\sigma\sup_{\|w\|_{V^2_{W\L}(I_{N_0})L^2_x}\le 1}\Big\{\sum_{N_1,\dots,N_m}\Big|\int_{I_{N_0}}\int_{\T^d}\NN_0(\P_{N_1}u_1,\dots,\P_{N_m}u_m)\cj{\P_{N_0} w}dxdt\Big|\\
&\qquad\qquad\qquad\qquad+\Big|\int_0^t\int_{\T^d}\RR(\P_{N_0}u_1,\dots,\P_{N_0}u_m)\cj{\P_{N_0} w}dxdt\Big|\Big\}\Big\|_{\ell^2_{N_0}}.
\end{align*}
Cutting the time interval $I_{N_0}$ in ${\displaystyle O\Big(\big(\frac{N_0^*}{N_0}\big)^\nu\Big)}$ intervals of size $|I_{N_0^*}|\le (N_0^*)^{-\nu}\wedge |I_{N_0}|$, then invoking Lemma~\ref{LEM:Multi}, we have for any $\theta>0$:
\begin{align*}
&\sum_{N_1,\dots,N_m}\Big|\int_{I_{N_0}}\int_{\T^d}\NN_0(\P_{N_1}u_1,\dots,\P_{N_m}u_m)\cj{\P_{N_0} w}dxdt\Big|\\
&\qquad\les\sum_{N_1,\dots,N_m}\sup_{\substack{I_{N_0^*}\subset I_{N_0}\\|I_{N_0}^*|=(N_0^*)^{-\nu}\wedge |I_{N_0}|}}(N_0^*)^\nu N_0^{-\nu}\Big|\int_{I_{N_0^*}}\int_{\T^d}\NN_0(\P_{N_1}u_1,\dots,\P_{N_m}u_m)\cj{\P_{N_0} w}dxdt\Big|\\
&\qquad\les T^\theta\sum_{\substack{N_1,\dots,N_m\\ N_0^*\sim N_1^*}}\sup_{\substack{I_{N_0^*}\subset I_{N_0}\\|I_{N_0}^*|=(N_0^*)^{-\nu}\wedge |I_{N_0}|}}(N_0^*)^{(1-\gamma+\theta)\nu+\beta_{j_0}+\beta_{j_1}-\alpha_1\rho}(N_2^*)^{\beta_{j_2}+\frac{d}2-\alpha_2\rho}\\
&\qquad\qquad\times\Big(\prod_{\ell=3}^m(N_\ell^*)^{\beta_{j_\ell}+\frac{d}2}\Big)N_0^{-\nu}\|\P_{N_0}w\|_{V^2_{W\L}(I_{N_0^*})L^2_x}\prod_{j=1}^m\|\P_{N_j}u_j\|_{U^2_{W\L}(I_{N_0^*})L^2_x}\\
&\qquad\les T^\theta\sum_{\substack{N_1,\dots,N_m\\ N_0^*\sim N_1^*}}(N_0^*)^{(1-\gamma+\theta)\nu+\beta_{j_0}+\beta_{j_1}-\alpha_1\rho}(N_2^*)^{\beta_{j_2}+\frac{d}2-\alpha_2\rho}N_0^{-\nu}\Big(\prod_{\ell=3}^m(N_\ell^*)^{\beta_{j_\ell}+\frac{d}2}\Big)\\
&\qquad\qquad\times\|\P_{N_0}w\|_{V^2_{W\L}(I_{N_0^*})L^2_x}\prod_{j=1}^m\Big(\sup_{\substack{I_{N_j}\subset [0;T)\\|I_{N_j}|=(N_j)^{-\nu}\wedge T}}\|\P_{N_j}u_j\|_{U^2_{W\L}(I_{N_j})L^2_x}\Big),
\end{align*}
%\begin{align*}
%&\qquad\les T^\gamma\sum_{\substack{N_1,\dots,N_m\\ N_0^*\sim N_1^*}} (N_0^*)^{\beta_{j_0}+\beta_{j_1}-\alpha_1\rho+(1-\gamma+\theta)\nu}(N_2^*)^{\beta_{j_2}+\frac{d}2-s-\alpha_2\rho}\\
%&\qquad\qquad\qquad\qquad\times N_0^{\sigma-\nu}\Big(\prod_{\substack{\ell=3\\j_\ell\neq 0}}^m(N_\ell^*)^{\beta_{j_\ell}+\frac{d}2-s}\Big) N_0^{-\sigma}\|\P_{N_0}w\|_{V^2_{W\L}(I_{N_0})L^2_x}\\
%&\qquad\qquad\qquad\qquad\times\Big((N_1^*)^\sigma\sup_{\substack{I_{N_1^*}\subset [0;T)\\|I_{N_1^*}|\le (N_1^*)^{-\nu}\wedge T}}\|\P_{N_1^*}u_{j_1}\|_{U^2_{W\L}(I_{N_1^*})L^2_x}\Big)\\
%&\qquad\qquad\qquad\qquad\times\Big(\prod_{\ell=2}^m (N_\ell^*)^{s}\sup_{\substack{I_{N_\ell^*}\subset [0;T)\\|I_{N_\ell^*}|\le (N_\ell^*)^{-\nu}\wedge T}}\|\P_{N_\ell^*}u_{j_\ell}\|_{U^2_{W\L}(I_{N_{j_\ell}})L^2_x}\Big),
%\end{align*}

where in the last step we used Proposition~\ref{PROP:UpVp0}~(ii) to localize on time intervals at the correct scale.

We can then proceed as in the proof of Proposition~\ref{PROP:multi} to sum on the $N_j$'s: this yields
\begin{align*}
\big\|\NN_0(u)\big\|_{\Nb^{\sigma,\nu}_T}\les T^\theta\sum_{j=1}^m\|u_j\|_{\Fb^{\sigma,\nu}_T}\prod_{j'\neq j}\|u_{j'}\|_{\Fb^{s,\nu}_T}
\end{align*}
for any $\sigma\ge s$, provided that
\begin{align*}
\begin{cases}
\rho> \alpha_1^{-1}\big(\beta_0+\beta_1^*-\gamma\nu),\\
\rho> \alpha_1^{-1}\big(\beta_1^*+\beta_2^*-2s+(1-\gamma)\nu),\\
{\displaystyle \rho>(\alpha_1+\alpha_2)^{-1}\big(\beta_1^*+\beta_2^*-2s+\sum_{\ell=2}^{L}(\beta_{\ell+1}^*+\frac{d}2-s)\big)+(1-\gamma)\nu,}\\
\qquad\qquad\qquad\qquad \text{ for any } 2\le \ell_0\le m\text{ and }2\le L\le \ell_0-1,\\
{\displaystyle \rho>(\alpha_1+\alpha_2)^{-1}\big(\beta_1^*+\beta_2^*+\beta_0+\frac{d}2-s+\sum_{\ell=2}^{L}(\beta_{\ell+1}^*+\frac{d}2-s)\big)-\gamma\nu},\\
\qquad\qquad\qquad\qquad\text{ for any }2\le \ell_0\le m\text{ and }\ell_0+1\le L\le m,
\end{cases}
\end{align*}
and $0<\theta\ll 1$ so that these conditions also hold with $\gamma-\theta$ in place of $\gamma$.

We can now estimate the resonant part as
\begin{align*}
&\big\|\RR(u_1,\dots,u_m)\big\|_{\Nb^{s,\nu}_T}^2\\
&\le\sum_{N}N^{2s}\sup_{\substack{I_N\subset [0;T)\\|I_N|=N^{-\nu}\wedge T}}\sup_{\|w\|_{V^2_{W\L}(I_N)L^2_x}\le 1}\Big|\int_{I_N}\sum_{k_0\in\Z^d}\ft\RR(k_0,\dots,k_0)\cj{\widehat{\P_N w}_{k_0}}\prod_{j=1}^m(\ft u_j)_{k_0}^{\pm_j}dt\Big|^2,
\end{align*}
and
\begin{align*}
&\Big|\int_{I_N}\sum_{k_0\in\Z^d}\ft\RR(k_0,\dots,k_0)\cj{\widehat{\P_N w}_{k_0}}\prod_{j=1}^m(\ft u_j)_{k_0}^{\pm_j}dt\Big|\\
&\qquad\les T^\theta N^{|\beta_\RR|-(1-\theta)\nu}\Big\|\sum_{|k_0|\sim N}|\cj{\widehat{\P_N w}_{k_0}}|\prod_{j=1}^m|(\ft u_j)_{k_0}|\Big\|_{L^\infty_{I_N}}\\
&\qquad\les T^\theta N^{|\beta_\RR|-(1-\theta)\nu}\sum_{\substack{N_1,\dots,N_m\\N_j\sim N}}\|\P_N w\|_{L^\infty_{I_N}L^2_x}\prod_{j=1}^m\|\P_{N_j}u_j\|_{L^\infty_{I_{N}}L^2_x}\\
&\qquad\les T^\theta N^{|\beta_\RR|-(1-\theta)\nu}\sum_{\substack{N_1,\dots,N_m\\N_j\sim N}}\|\P_N w\|_{V^2_{W\L}(I_N)L^2_x}\prod_{j=1}^m\|\P_{N_j}u_j\|_{U^2_{W\L}(I_{N})L^2_x}
\end{align*}
for any $\theta\in[0;1]$.

Thus after summing on $N$ we find
\begin{align*}
\big\|\RR(u_1,\dots,u_m)\big\|_{\Nb^{s,\nu}_T}&\les T^\theta \sum_{j=1}^m\|u_j\|_{\Fb^{\sigma,\nu}_T}\prod_{j'\neq j}\|u_{j'}\|_{\Fb^{s,\nu}_T},
\end{align*}
provided that $\nu$ satisfies \eqref{nu} and then $\theta$ is chosen smaller if necessary so that $$ 0<\theta<1-\frac{|\beta_\RR|-(m-1)s}{\nu}.$$
\end{proof}

We now establish an a priori energy estimate on smooth solutions to \eqref{EQ}.
\begin{lemma}\label{LEM:energy}
Under the assumptions of Theorem~\ref{THM:LWP-R}, then for $u_0\in H^\infty(\T^d)$, the smooth solution $u\in \bigcap_{\sigma \ge s_\RR}C([0;T);H^\sigma(\T^d))\cap \X^\sigma_T$ provided by Lemma~\ref{LEM:LWP} satisfies
\begin{align}\label{energy}
\|u\|_{\Eb^\sigma_T}^2\les \|u_0\|_{H^\sigma}^2+T\|u\|_{\Fb^{\sigma,\nu}_T}^2\|u\|_{\Fb^{s,\nu}_T}^{m-1}
\end{align}
for any $\sigma\ge s$.
\end{lemma}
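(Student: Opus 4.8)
The plan is to run a short-time energy method built on three ingredients: the interaction representation that trivializes the rough modulation, the algebraic cancellation of the completely resonant part, and the multilinear estimate of Lemma~\ref{LEM:Multi} applied on short time windows.

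First I would pass to the interaction representation $w(t):=e^{W_t\L}u(t)$. Multiplying the Duhamel formula \eqref{Duhamel} by $e^{W_t\L}$ gives $w(t)=w(0)-\int_0^te^{W_\tau\L}\NN(u(\tau))d\tau$ with $w(0)=e^{W_0\L}u_0$. For the smooth solution produced by Lemma~\ref{LEM:LWP}, the integrand $\tau\mapsto e^{W_\tau\L}\NN(u(\tau))$ is continuous with values in every $H^\sigma(\T^d)$ (mere continuity of $W$ suffices, no derivative of $W_t$ is invoked), so $w\in C^1$ and $\dt w=-e^{W_t\L}\NN(u)$. Since $e^{W_t\L}$ is a unitary Fourier multiplier commuting with $\P_N$, we have $\|\P_N u(t)\|_{L^2}=\|\P_N w(t)\|_{L^2}$, and differentiating yields the energy identity
$$\|\P_N u(t)\|_{L^2}^2=\|\P_N u_0\|_{L^2}^2-2\Re\int_0^t\langle\P_N\NN(u(\tau)),\P_N u(\tau)\rangle\,d\tau$$
for every $t\in[0;T)$ and every dyadic $N$; here the skew-adjointness of $\L$ is exactly what removes the singular $\frac{dW}{dt}\L$ term.

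Next I would discard the $\RR$-contribution using the completely resonant structure \eqref{Resonant}. On the support of $\RR$ all frequencies coincide, $k_0=\dots=k_m=:k$, so by Plancherel the resonant pairing reduces to $\langle\P_N\RR(u),\P_N u\rangle=\sum_{|k|\sim N}\ft\RR(k,\dots,k)\,|\widehat u_k|^{m+1}$, a sum of purely imaginary terms because $\ft\RR(k,\dots,k)\in i\R$ by \eqref{Resonant} (this is the same cancellation as in the computation leading to \eqref{RmKdV}). Hence $\Re\langle\P_N\RR(u),\P_N u\rangle=0$ and only the strongly non-resonant part $\NN_0$ survives in the energy identity. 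It then remains to bound $\sum_N N^{2\sigma}\sup_{t\in[0;T)}\big|\int_0^t\langle\P_N\NN_0(u),\P_N u\rangle\big|$ by $T\|u\|_{\Fb^{\sigma,\nu}_T}^2\|u\|_{\Fb^{s,\nu}_T}^{m-1}$. For each $N$ I would partition $[0;t)$ into $O(TN^\nu)$ intervals $I_N^{(i)}$ of length $N^{-\nu}$, apply Lemma~\ref{LEM:Multi} on each window (with output frequency $N$ and the test function $\P_N u$ placed in $V^2_{W\L}(I_N^{(i)})L^2$ via the $DU^2$--$V^2$ duality of Proposition~\ref{PROP:duality}), and sum over $i$. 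Each window contributes the factor $|I_N^{(i)}|^\gamma=N^{-\nu\gamma}$, and Hölder in the interval index together with the monotonicity of $U^2$-norms under interval inclusion (Proposition~\ref{PROP:UpVp0}(ii)) converts the per-window $U^2$/$V^2$ norms into the short-time $\Fb^{\cdot,\nu}$ norms. The interval count $O(TN^\nu)$ times $N^{-\nu\gamma}$ produces the factor $T\,N^{\nu(1-\gamma)}$, and the extra frequency loss $N^{\nu(1-\gamma)}$ is absorbed by the non-resonance gain $N^{-\alpha_1\rho}$ from \ref{A5} after the same case analysis on $N_0,\dots,N_m$ as in Proposition~\ref{PROP:multi} and Lemma~\ref{LEM:N}. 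This closes precisely because $\rho$ obeys the strengthened conditions \ref{Arho*}$_{\alpha,\beta,\beta_\RR,s}$, whose additional $(1-\gamma)\nu$ and $(2-\gamma)\nu$ terms are exactly these losses, while the single power of $T$ comes from the interval count. Summing in $N$ with weight $N^{2\sigma}$ then gives \eqref{energy}.

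The main obstacle is this last step: tracking the short-time decomposition so that exactly one factor of $\P_N u$ is tested in $V^2$ while the remaining $m$ factors and the output are controlled by $\Fb^{\cdot,\nu}$, and verifying that the frequency summation closes under \ref{Arho*}, i.e. that the localization loss $N^{\nu(1-\gamma)}$ is dominated by the non-resonance gain in every configuration of the case analysis. The resonant cancellation, although conceptually the reason the energy method outperforms a fixed point here, is by contrast a one-line computation.
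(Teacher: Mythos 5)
Your proposal follows essentially the same route as the paper's proof: pass to the interaction representation to get a $C^1$-in-time, mode-by-mode energy identity (no derivative of $W$ needed), kill the completely resonant contribution using $\ft\RR(k,\dots,k)\in i\R$, and then estimate the $\NN_0$ pairing by partitioning time into short windows and applying Lemma~\ref{LEM:Multi} with the same frequency summation as in Lemma~\ref{LEM:N}, the interval count supplying the single factor of $T$ and the localization loss being absorbed into \ref{Arho*}. The one imprecision is that the partition must be taken at the scale $(N_0^*)^{-\nu}$ of the \emph{largest} dyadic frequency (not the output frequency $N$), so the loss per configuration is $(N_0^*)^{(1-\gamma)\nu}$ sitting at the same frequency as the non-resonance gain $(N_0^*)^{-\alpha_1\rho}$ — which is exactly how the $(1-\gamma)\nu$ (and $(2-\gamma)\nu$) terms in \ref{Arho*} are accounted for in the paper.
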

\begin{proof}
Since $u_0\in H^\infty(\T^d)$, the mild solution $u$ with initial data $u_0$ provided by Lemma~\ref{LEM:LWP} satisfies $u\in C([0;T);H^\sigma(\T^d))\cap \X^\sigma_T$ for any $\sigma\ge s_\RR$, with $T=T(\|u_0\|_{H^{s_\RR}})\in(0;1]$. In particular,
\begin{align*}
v(t):=e^{W_t\L}u(t)
\end{align*}
belongs to $C([0;T);H^\infty(\T^d))$ and satisfies the integral equation
\begin{align*}
v(t)=e^{W_0\L}u_0+\int_0^te^{W_{\tau}\L}\NN\big(e^{-W_{\tau}\L}v\big)d\tau
\end{align*}
in $C([0;T);H^\sigma(\T^d))$ for any $\sigma\ge s_\RR$.

Since for any $k_0\in\Z^d$ it holds
\begin{align*}
\ft v(t,k_0)=e^{iW_0\varphi(k_0)}\ft u_0(k_0)+\int_0^t\widehat{\big(e^{W_\tau\L}\NN\big(e^{-W_\tau\L}v\big)\big)}(k_0)d\tau
\end{align*}
in $C([0;T);\C)$, and since we have the bound
\begin{align*}
\Big|\widehat{\big(e^{W_\tau\L}\NN\big(e^{-W_\tau\L}v\big)\big)}(k_0)\Big|&\les \sum_{\substack{(k_1,\dots,k_m)\in(\Z^d)^m\\\sum_{j=0}^m\pm_jk_j=0}}\jb{k_0}^{\beta_0}\prod_{j=1}^m\jb{k_j}^{\beta_j}|\ft v(\tau,k_j)| +\jb{k_0}^{|\beta_\RR|}|\ft v(\tau,k_0)|^{m}\\
&\les \jb{k_0}^{\beta_0} \|v\|_{L^\infty_T H^{\sigma}}^m+\|v\|_{L^\infty_T H^{\sigma}}^m,
\end{align*}
for $\sigma>\max(s_\RR;\max_j \beta_j+\frac{d}2)$, and with a constant uniform in $\tau$ and $k_0$, together with the continuity of $W_t$, then Lebesgue dominated convergence theorem ensures that $\tau\mapsto \widehat{\big(e^{W_\tau\L}\NN\big(e^{-W_\tau\L}v\big)\big)}(k_0)$ is continuous on $(0;T)$, and thus  $t\mapsto \ft v(t,k_0)$ is $C^1((0;T);\C)$ with
\begin{multline*}
\dt \ft v(t,k_0)= \sum_{\substack{(k_1,\dots,k_m)\in(\Z^d)^m\\\sum_{j=0}^m\pm_jk_j=0}}\ft \NN_0(k_0,\dots,k_m)e^{iW_t\big(\sum_{j=0}^m\pm_j\varphi(k_j)\big)}\prod_{j=1}^m\ft v_{k_j}(t)^{\pm_j}\\+\ft\RR(k_0,\dots,k_0)|\ft v_{k_0}(t)|^{m-1}\ft v_{k_0}(t),
\end{multline*}
due to \eqref{Resonant}.

Now, using also assumption \ref{A5*}$_{\alpha,\beta,\beta_\RR}$, we infer that
\begin{align*}
\frac{d}{dt}\frac12|\ft v_{k_0}(t)|^2
&=\Re\Big\{\sum_{\substack{(k_1,\dots,k_m)\in(\Z^d)^m\\\sum_{j=0}^m\pm_jk_j=0}}\ft \NN_0(k_0,\dots,k_m)e^{iW_t\sum_{j=0}^m\pm_j\varphi(k_j)}\cj{\ft v_{k_0}}(t)\prod_{j=1}^m\ft v_{k_j}(t)^{\pm_j}\\&\qquad\qquad+\ft\RR(k_0,\dots,k_0)|\ft v_{k_0}(t)|^{m+1}\Big\}\\
&=\Re\Big\{\sum_{\substack{(k_1,\dots,k_m)\in(\Z^d)^m\\\sum_{j=0}^m\pm_jk_j=0}}\ft \NN_0(k_0,\dots,k_m)e^{iW_t\sum_{j=0}^m\pm_j\varphi(k_j)}\prod_{j=0}^m\ft v_{k_j}(t)^{\pm_j}\Big\},
\end{align*} 
where the cancellation of the completely resonant nonlinear part comes from \eqref{Resonant}.

Integrating in time, we find
\begin{align}\label{actions}
&\frac12|\ft u_{k_0}(t)|^2\notag\\
&=\frac12|\ft v_{k_0}(t)|^2\notag\\
&=\frac12|(\ft u_0)_{k_0}|^2+\Re\Big\{\int_0^t\sum_{\substack{(k_1,\dots,k_m)\in(\Z^d)^m\\\sum_{j=0}^m\pm_jk_j=0}}\ft \NN_0(k_0,\dots,k_m)e^{iW_\tau\sum_{j=0}^m\pm_j\varphi(k_j)}\prod_{j=0}^m\ft v_{k_j}(\tau)^{\pm_j}d\tau\Big\}.
\end{align}
Summing in $k_0\in\Z^d$, this yields for any dyadic integer $N$:
\begin{align*}
&\frac12\|\P_{N_0}u\|_{L^\infty_TL^2_x}^2\le\frac12\|\P_{N_0}u_0\|_{L^2_x}^2+\sup_{t\in[0;T]}\Big|\int_0^t\int_{\T^d}\jb{\nabla}^\sigma\P_{N_0}\NN_0(u)\cj{\jb{\nabla}^\sigma \P_{N_0}u}dxd\tau\Big|\\
&\le\frac12\|\P_{N_0}u_0\|_{L^2_x}^2\\
&\qquad\qquad+\sup_{t\in[0;T]}\sum_{\substack{N_1,\dots,N_m\\ N_0^*\sim N_1^*}}N_0^{2\sigma}\sum_{\substack{I_{N_0^*}\subset [0;t)\\|I_{N_0^*}|=(N_0^*)^{-\nu}\wedge t\\\sqcup_\ell I_{N_0^*}=[0;t)}}\Big|\int_{I_{N_0^*}}\int_{\T^d}\NN_0(\P_{N_1 u},\dots,\P_{N_m}u)\cj{\P_{N_0}u}dxd\tau\Big|\\
&\le \frac12\|\P_{N_0}u_0\|_{L^2_x}^2\\
&\qquad+CT\sum_{\substack{N_1,\dots,N_m\\ N_0^*\sim N_1^*}}N_0^{2\sigma}\sup_{\substack{I_{N_0^*}\subset [0;t)\\|I_{N_0^*}|=(N_0^*)^{-\nu}\wedge t}}(N_0^*)^\nu\Big|\int_{I_{N_0^*}}\int_{\T^d}\NN_0(\P_{N_1 u},\dots,\P_{N_m}u)\cj{\P_{N_0}u}dxd\tau\Big|.
\end{align*}
Then, repeating the proof of the estimate on $\NN_0$ in Lemma~\ref{LEM:N}, we finally find
\begin{align*}
\|u\|_{\Eb^\sigma_T}^2\les \|u_0\|_{H^\sigma}^2 +T\|u\|_{\Fb^{\sigma,\nu}_T}^2\|u\|_{\Fb^{s,\nu}_T}^{m-1},
\end{align*}
provided now that
%\begin{align*}
%\rho\ge \alpha_1^{-1}\big(\wt\beta_0^*+\wt\beta_1^*\big)\qquad\text{and}\qquad \rho>(\alpha_1+\alpha_2)^{-1}\big(\wt\beta_0^*+\wt\beta_1^*+\sum_{j\ge 2}\max\big(\wt\beta_j^*+\frac{d}2-s;0\big)\big),
%\end{align*}
%and
%\begin{align*}
%\rho>\alpha_1^{-1}\big(\wt\beta_0^*+\wt\beta_1^*-2s\big)\qquad\text{and}\qquad \rho>(\alpha_1+\alpha_2)^{-1}\big(\wt\beta_0^*+\wt\beta_1^*+\sum_{j\ge 2}\max\big(\wt\beta_j^*+\frac{d}2-s;0\big)\big),
%\end{align*}
%where now $\wt\beta=B_{(1-\gamma)\nu}A_\nu (\beta)$.
\begin{align*}
\begin{cases}
\rho> \alpha_1^{-1}\big(\beta_0+\beta_1^*+(1-\gamma)\nu),\\
\rho> \alpha_1^{-1}\big(\beta_1^*+\beta_2^*-2s+(1-\gamma)\nu),\\
{\displaystyle \rho>(\alpha_1+\alpha_2)^{-1}\big(\beta_1^*+\beta_2^*-2s+\sum_{\ell=2}^{L}(\beta_{\ell+1}^*+\frac{d}2-s)\big)+(1-\gamma)\nu,}\\
\qquad\qquad\qquad\qquad \text{ for any } 2\le \ell_0\le m\text{ and }2\le L\le \ell_0-1,\\
{\displaystyle \rho>(\alpha_1+\alpha_2)^{-1}\big(\beta_1^*+\beta_2^*+\beta_0+\frac{d}2-s+\sum_{\ell=2}^{L}(\beta_{\ell+1}^*+\frac{d}2-s)\big)+(1-\gamma)\nu},\\
\qquad\qquad\qquad\qquad\text{ for any }2\le \ell_0\le m\text{ and }\ell_0+1\le L\le m,
\end{cases}
\end{align*}
since we do not gain anymore the factor $N_0^{-\nu}$ compared to the proof of Lemma~\ref{LEM:N}.

This proves \eqref{energy}.
\end{proof}
\begin{remark}\label{REM:smoothing}
\rm
From the proof of \eqref{energy} above, when $\sigma=s$, since the conditions \eqref{nu} on $\nu$ and \ref{Arho}$_{\alpha,\wt\beta,s}$ on $\rho$, with $\wt\beta=B_{(1-\gamma)\nu}A_\nu(\beta)$, are open, given $s,\nu$ and $\rho$ as in Lemma~\ref{LEM:energy}, there is $0<\delta(s,\alpha,\beta,\nu,\rho)\ll 1$ such that $\Fb^{s,\nu}_T$ in the right-hand side of \eqref{energy} can be replaced by $\Fb^{s-\delta,\nu}_T$. This smoothing property will be crucial in the proof of Theorem~\ref{THM:LWP-R} below, as in \cite[Remark 7.7]{GO}.
\end{remark}

We can now establish the existence part of Theorem~\ref{THM:LWP-R}~(ii).
\begin{proof}[Proof of Theorem~\ref{THM:LWP-R}~(ii)] 
Collecting \eqref{linear}-\eqref{N}-\eqref{energy} and letting $$X_\sigma(T):=\|u\|_{\Eb^\sigma_T}+\|\NN(u)\|_{\Nb^{\sigma,\nu}_T},$$ we find
\begin{align}\label{apriori1}
X_s(T)^2\le C\|u_0\|_{H^s}^2+CTX_s(T)^{m+1}+CT^{2\theta}X_s(T)^{2m}.
\end{align}
Since $T\mapsto X_s(T)$ is continuous and $$\lim_{T\searrow 0}\|u\|_{\Eb^s_T}\le C'\|u_0\|_{H^s}\qquad\text{and}\qquad\|\NN(u)\|_{\Nb^{s,\nu}_T}\underset{T\searrow 0}{\longrightarrow}0,$$ a continuity argument on \eqref{apriori1} provides 
\begin{align}\label{T0}
T_0=T_0(\|u_0\|_{H^s})
\end{align}
such that
\begin{align}\label{apriori2}
\|u\|_{\Fb^{s,\nu}_T}\les X_s(T)\les \|u_0\|_{H^s}
\end{align}
uniformly in $T\le T_0$, for any smooth solution emanating from $u_0\in H^\infty(\T^d)$.

Now, let $u_0\in H^s(\T^d)$ and $u_{0,n}=\P_{\le n}u_0\in H^\infty(\T^d)$, and let $u_n$ be the smooth solution emanating from $u_{0,n}$, which is also defined on $[0;T_0)$. From \eqref{apriori2} and \eqref{embedding}, we have that $\{u_n\}$ is uniformly bounded in $C([0;T];H^s(\T^d))$ for any $T<T_0$. Moreover, by Remark~\ref{REM:smoothing}, we find that \eqref{energy} holds with $\Fb^{s,\nu}_T$ replaced by $\Fb^{s-\delta,\nu}_T$ in the right-hand side, for some $0<\delta\ll 1$. Then deriving the energy estimate for $\P_{>N}u$ instead of $u$ in Lemma~\ref{LEM:energy} yields
\begin{align*}
\|\P_{> N}u_n\|_{L^\infty_TH^s}^2&\les \|\P_{> N}u_n\|_{\Eb^s_T}^2\\
&\les \|\P_{> N}u_{0,n}\|_{H^s}^2+T\|\P_{>N}u_n\|_{\Fb^{s-\delta,\nu}_T}^2\|u_n\|_{\Fb^{s-\delta,\nu}_T}^{m-1}.
\end{align*}
Since \eqref{apriori2} is uniform in $n$ and $N$, using our choice of $T_0(\|u_0\|_{H^s})$ with $T<T_0$, we get
\begin{align}\label{tail}
\|\P_{> N}u_n\|_{L^\infty_TH^s}^2\les\|\P_{> N}u_n\|_{\Eb^s_T}^2\les \|\P_{> N}u_{0,n}\|_{H^s}^2+C(\|u_0\|_{H^s})N^{-\delta}\underset{N\to\infty}{\longrightarrow}0,
\end{align}
uniformly in $n$ (recall that $u_{0,n}=\P_{\le n}u_0$).

Taking arbitrary $\eps>0$, we can thus find $N_0$ such that $\|\P_{> N_0}u_n\|_{L^\infty_TH^s}\le \frac{\eps}{3}$. Then, for any $0<h<1$, writing the Duhamel formula on $[t;t+h]$, we can estimate
\begin{align*}
&\|\P_{\le N_0}u_n(t+h)-\P_{\le N_0}u_n(t)\|_{H^s}\\
&\le\Big\|\big(e^{-(W_{t+h}-W_t)\L}-1\big)\P_{\le N_0}u_n(t)\Big\|_{H^s}+\Big\|\int_t^{t+h}e^{-(W_{t+h}-W_\tau)\L}\P_{\le N_0}\NN(u_n)(\tau)d\tau\Big\|_{H^s}\\
&\le \omega^W_T(h)\|\varphi\|_{L^\infty(|k|\le N_0)}\big\|\P_{\le N_0}u_n\big\|_{L^\infty_T H^s}+\sum_{\wt N_0\le N_0}\sum_{\substack{I_{\wt N_0,\ell}\subset[t;t+h)\\|I_{\wt N_0,\ell}|=\wt N_0^{-\nu}\wedge h\\\cup_\ell I_{\wt N_0,\ell}=[t;t+h)}}\big\|\P_{\wt N_0}\NN(u_n)\big\|_{DU^2_{W\L}(I_{\wt N_0,\ell})H^s},
\end{align*}
where $\omega^W_T$ is the uniform modulus of continuity of $W_t$ on $[0;T]$. Moreover, repeating the proof of Lemma~\ref{LEM:energy}, we have that
\begin{align*}
&\sum_{\wt N_0\le N_0}\sum_{\substack{I_{\wt N_0,\ell}\subset[t;t+h)\\|I_{\wt N_0,\ell}|=\wt N_0^{-\nu}\wedge h\\\sqcup_\ell I_{\wt N_0,\ell}=[t;t+h)}}\big\|\P_{\wt N_0}\NN_0(u_n)\big\|_{DU^2_{W\L}(I_{\wt N_0,\ell})H^s}\les h \|\P_{\le N_0}u_n\|_{\Fb^{s,\nu}_T}\|u_n\|_{\Fb^{s,\nu}_T}^{m-1},
\end{align*}
while repeating the proof of Lemma~\ref{LEM:N} with $\theta=0$ gives
\begin{align*}
&\sum_{\wt N_0\le N_0}\sum_{\substack{I_{\wt N_0,\ell}\subset[t;t+h)\\|I_{\wt N_0,\ell}|=\wt N_0^{-\nu}\wedge h\\\sqcup_\ell I_{\wt N_0,\ell}=[t;t+h)}}\big\|\P_{\wt N_0}\RR(u_n)\big\|_{DU^2_{W\L}(I_{\wt N_0,\ell})H^s}\\
&\qquad\qquad\les h N_0^\nu \sum_{\wt N_0\le N_0}\sup_{\substack{I_{\wt N_0,\ell}\subset[t;t+h)\\|I_{\wt N_0,\ell}|=\wt N_0^{-\nu}\wedge h\\\sqcup_\ell I_{\wt N_0,\ell}=[t;t+h)}}\big\|\P_{\wt N_0}\RR(u_n)\big\|_{DU^2_{W\L}(I_{\wt N_0,\ell})H^s}\\
&\qquad\qquad\les h N_0^\nu \|\P_{\le N_0}u\|_{\Fb^{s,\nu}_T}^m.
\end{align*}

Combining these estimates together with \eqref{apriori2} leads to
\begin{align*}
\|\P_{\le N_0}u_n(t+h)-\P_{\le N_0}u_n(t)\|_{H^s}\le C(N_0,\|u_0\|_{H^s})\big(\omega^W_T(h)+h).
\end{align*}
This shows that $\{\P_{\le N_0}u_n(t)\}_n$ is equicontinuous with values in $H^s(\T^d)$. Thus Arzelà–Ascoli theorem ensures that $\{\P_{\le N_0}u_n\}_n$ is pre-compact in $C([0;T];H^s(\T^d))$ for any $T<T_0$, which in particular implies that we can find a finite covering ${\displaystyle\big\{B_{C_TH^s}\big(\P_{\le N_0}u_{n_\ell};\frac{\eps}{3}\big)\big\}_{\ell=1,\dots,L}}$ of $\{\P_{\le N_0}u_n\}_n$ by balls in $C([0;T];H^s(\T^d))$ of radius $\frac{\eps}{3}$ centred at some $\P_{\le N_0}u_{n_\ell}$. From our choice of $N_0$, we finally deduce that ${\displaystyle\big\{B_{C_TH^s}\big(\P_{\le N_0}u_{n_\ell};\eps\big)\big\}_{\ell=1,\dots,L}}$ is a finite covering of $\{u_n\}_n$ in $C([0;T];H^s(\T^d))$. This shows that $\{u_n\}_n$ is pre-compact in $C([0;T];H^s(\T^d))$ for any $T<T_0(\|u_0\|_{H^s})$.

Therefore, there is a subsequence, still denoted $\{u_n\}$, which converges in $C([0;T];H^s(\T^d))$ to some $u\in C([0;T];H^s(\T^d))$. The bound \eqref{tail} allows to upgrade this convergence to $\Eb^s_T$. From \eqref{linear}-\eqref{N} and the multilinearity of $\NN$ we have
\begin{align*}
\|u_n-u_{n'}\|_{\Fb^{s,\nu}_T}&\les \|u_n-u_{n'}\|_{\Eb^s_T}+\big\|\NN(u_n)-\NN(u_{n'})\big\|_{\Nb^{s,\nu}_T}\\
&\les \|u_n-u_{n'}\|_{\Eb^s_T}+(T^\gamma+T^\theta)\|u_n-u_{n'}\|_{\Fb^{s,\nu}_T}\big(\|u_n\|_{\Fb^{s,\nu}_T}+\|u_{n'}\|_{\Fb^{s,\nu}_T}\big)^{m-1},
\end{align*}
which, together with \eqref{apriori2} and our choice of $T_0$ \eqref{T0}, yields
\begin{align*}
\|u_n-u_{n'}\|_{\Fb^{s,\nu}_T}&\les \|u_n-u_{n'}\|_{\Eb^s_T}.
\end{align*}
This shows that the subsequence $\{u_n\}$ also converges in $\Fb^{s,\nu}_T$, and together with Lemma~\ref{LEM:N} and the multilinearity of $\NN$ we also find
\begin{align*}
\big\|\NN(u_n)-\NN(u_{n'})\big\|_{\Nb^{s,\nu}_T} &\les (T^\gamma+T^\theta)\|u_n-u_{n'}\|_{\Fb^{s,\nu}_T}\big(\|u_n\|_{\Fb^{s,\nu}_T}+\|u_{n'}\|_{\Fb^{s,\nu}_T}\big)^{m-1}\\
& \les C(\|u_0\|_{H^s})\|u_n-u_{n'}\|_{\Fb^{s,\nu}_T},
\end{align*}
showing that $\{\NN(u_n)\}$ converges in $\Nb^{s,\nu}_T$. This ensures that the limit $u$ of the subsequence $\{u_n\}$ satisfies \eqref{EQ} and belongs to $C([0;T);H^s(\T^d))\cap \Fb^{s,\nu}_T\cap \Eb^s_T$ for any $T<T_0$. This concludes the proof of the existence part of Theorem~\ref{THM:LWP-R}~(ii).
\end{proof}

\subsection{Uniqueness and continuous dependence}
In this subsection, we finish the proof of Theorem~\ref{THM:LWP-R}. 

In order to prove uniqueness of the solution constructed in the previous subsection, we first establish an energy estimate for the difference of two solutions with the same initial datum.
\begin{lemma}\label{LEM:Ediff0} 
Let $s\in\R$, $u_0\in H^s(\T^d)$, and let us assume that for some $T_0>0$, there are two solutions $u_1,u_2\in C([0;T_0);H^s(\T^d))\cap\Fb^{s,\nu}_{T_0}\cap \Eb^s_{T_0}$ with same initial data $u_0$, and that these solutions are limits in $C([0;T_0);H^s(\T^d))\cap\Fb^{s,\nu}_{T_0}\cap \Eb^s_{T_0}$ of some smooth approximating solutions. Then, under assumptions \ref{A0}-\ref{A1}-\ref{A5*}$_{\alpha,\beta,\beta_\RR}$, provided that $\rho$ satisfies \ref{Arho*}$_{\alpha,\beta,\beta_\RR,s}$, it holds for any $T< T_0$:
\begin{align}\label{Ediff0}
&\|u_1-u_2\|_{\Eb^s_T}^2\notag\\
&\qquad\les T\|u_1-u_2\|_{\Fb^{s,\nu}_T}^2\big(\|u_1\|_{\Fb^{s,\nu}_T}+\|u_2\|_{\Fb^{s,\nu}_T}\big)^m\big(1+T\big(\|u_1\|_{\Fb^{s,\nu}_T}+\|u_2\|_{\Fb^{s,\nu}_T}\big)^{m-3}\big).
\end{align}
\end{lemma}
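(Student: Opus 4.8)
The plan is to run the short-time energy method of Lemma~\ref{LEM:energy} on the difference $w:=u_1-u_2$, the only genuinely new point being that the completely resonant part $\RR$ no longer cancels. Since by assumption $u_1,u_2$ are limits in $C([0;T_0);H^s(\T^d))\cap\Fb^{s,\nu}_{T_0}\cap\Eb^s_{T_0}$ of smooth solutions $u_1^{(n)},u_2^{(n)}$, I would first prove, for each smooth pair, the analogue of \eqref{Ediff0} carrying an extra boundary term $\|u_1^{(n)}(0)-u_2^{(n)}(0)\|_{H^s}^2$ on the right-hand side; at this level the per-mode differential identity derived in the proof of Lemma~\ref{LEM:energy} is rigorous. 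Letting $n\to\infty$ and using $u_i^{(n)}(0)\to u_0$ in $H^s(\T^d)$ makes this boundary term vanish---this is the \emph{only} place where the equal-initial-data hypothesis is used---while continuity of all the norms involved upgrades the estimate to $u_1,u_2$ themselves.

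For smooth solutions I pass to the interaction representation $v_i=e^{W_t\L}u_i$ and, writing $a_i$ for the $k_0$-th Fourier coefficient of $v_i$, subtract the per-mode ODEs of Lemma~\ref{LEM:energy} to obtain, for each $k_0\in\Z^d$, an equation for $a_1-a_2$ whose right-hand side splits into a strongly non-resonant $\NN_0$-part and the resonant $\RR$-part. Pairing with $\cj{(a_1-a_2)}$, summing against $\jb{k_0}^{2s}$ and integrating on short-time intervals exactly as in Lemma~\ref{LEM:energy}, the $\NN_0$-part is treated by multilinearity: I telescope $\NN_0(u_1)-\NN_0(u_2)$ into $m$ terms each carrying one factor of $w$, and apply the short-time multilinear bound of Lemma~\ref{LEM:Multi} as organized in Lemma~\ref{LEM:N}. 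Because the test function is itself (the projection of) $\cj w$, this produces \emph{two} factors of $w$ and a contribution of the form $T\|w\|_{\Fb^{s,\nu}_T}^2(\|u_1\|_{\Fb^{s,\nu}_T}+\|u_2\|_{\Fb^{s,\nu}_T})^{m-1}$, under the same conditions \ref{Arho*}$_{\alpha,\beta,\beta_\RR,s}$ already used there.

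The main obstacle is the resonant part, where the cancellation exploited in Lemma~\ref{LEM:energy} is destroyed by the symmetry breaking. Writing $\ft\RR(k_0,\dots,k_0)=i\gamma_{k_0}$ with $\gamma_{k_0}\in\R$ (legitimate by \ref{A5*}), the relevant energy density is governed by the identity
\begin{align*}
\Re\Big\{i\gamma_{k_0}\big(|a_1|^{m-1}a_1-|a_2|^{m-1}a_2\big)\cj{(a_1-a_2)}\Big\}=\gamma_{k_0}\big(|a_1|^{m-1}-|a_2|^{m-1}\big)\,\Im\big(a_1\cj{a_2}\big),
\end{align*}
which no longer vanishes, but---and this is the crux---is still \emph{quadratic} in the difference, since
\begin{align*}
\big||a_1|^{m-1}-|a_2|^{m-1}\big|\les(|a_1|+|a_2|)^{m-2}|a_1-a_2|,\qquad\big|\Im(a_1\cj{a_2})\big|=\big|\Im\big((a_1-a_2)\cj{a_2}\big)\big|\le|a_1-a_2|\,|a_2|.
\end{align*}
Hence the resonant density is bounded by $\jb{k_0}^{|\beta_\RR|}(|a_1|+|a_2|)^{m-1}|a_1-a_2|^2$, and the two powers of $w$ are exactly what is needed to close the estimate against $\|w\|_{\Eb^s_T}^2$ on the left. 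Summing over $k_0\sim N$ and over dyadic $N$, I would absorb the weight $\jb{k_0}^{|\beta_\RR|}$ at negative regularity using the frequency-dependent short-time gain $N^{-(1-\theta)\nu}$ precisely as in the $\RR$-estimate of Lemma~\ref{LEM:N}, which is where \eqref{nu} and \ref{Arho*}$_{\alpha,\beta,\beta_\RR,s}$ enter. Collecting the $\NN_0$- and $\RR$-contributions, both quadratic in $w$, and keeping track of the powers of $T$ and of the solution norms produced by the short-time summation---the extra homogeneity $(|a_1|+|a_2|)^{m-2}$ of the resonant bound accounting for the correction factor $\big(1+T(\|u_1\|_{\Fb^{s,\nu}_T}+\|u_2\|_{\Fb^{s,\nu}_T})^{m-3}\big)$---then yields \eqref{Ediff0}.
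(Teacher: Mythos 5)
There is a genuine gap in your treatment of the resonant part, which is precisely the heart of this lemma. You estimate the resonant density crudely by
$\jb{k_0}^{|\beta_\RR|}(|a_1|+|a_2|)^{m-1}|a_1-a_2|^2$ and claim to absorb the weight $\jb{k_0}^{|\beta_\RR|}$ at negative regularity ``using the frequency-dependent short-time gain $N^{-(1-\theta)\nu}$ precisely as in the $\RR$-estimate of Lemma~\ref{LEM:N}''. But that gain is only available for the $\Nb^{s,\nu}$ norm, where one takes a \emph{supremum} over intervals of length $N^{-\nu}$; in the energy estimate for $\Eb^s_T$ one must \emph{sum} over the $O(TN^{\nu})$ short intervals covering $[0;t]$, which returns the factor $N^{\nu}$ and leaves a net loss of $N^{|\beta_\RR|-(m-1)s}$ --- the paper points this out explicitly at the end of the proof of Lemma~\ref{LEM:energy} (``we do not gain anymore the factor $N_0^{-\nu}$''). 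Since the interesting regime is exactly $|\beta_\RR|>(m-1)s$ (otherwise $s_\RR=s$ and Lemma~\ref{LEM:LWP} already applies), your estimate of the resonant contribution does not close, and the quadratic-in-$w$ structure you identify as the crux is not sufficient.

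What the paper actually does, following \cite{OW}, is a normal-form substitution: after writing $|a_1|^{m-1}-|a_2|^{m-1}=(|a_1|^2-|a_2|^2)\sum_{m'}|a_1|^{2m'}|a_2|^{m-3-2m'}$, the difference of actions $|(\ft u_1)_{k_0}|^2-|(\ft u_2)_{k_0}|^2$ is replaced by the identity \eqref{identity2}, i.e.\ by a time integral of purely \emph{non-resonant} quantities --- the boundary terms cancel because the two solutions share the same initial datum. This is where the equal-initial-data hypothesis is really used (not merely to kill a boundary term $\|u_1(0)-u_2(0)\|_{H^s}^2$, as you suggest), and it converts the problematic resonant difference into a higher-order term on which Lemma~\ref{LEM:Multi} provides the gain $(N_0^*)^{-\alpha_1\rho}$; the strengthened conditions in \ref{Arho*} (the extra $|\beta_\RR|$ and $(2-\gamma)\nu$ compared with the conditions in Lemma~\ref{LEM:energy}) are exactly what absorbs the resulting loss of $N_0^{|\beta_\RR|+\nu}$, leading to \eqref{Ediff02}. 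Your bookkeeping also betrays the issue: a crude resonant bound has the same homogeneity $T\|w\|^2(\|u_1\|+\|u_2\|)^{m-1}$ as the $\NN_0$ part and could never produce the correction factor $1+T(\|u_1\|+\|u_2\|)^{m-3}$ in \eqref{Ediff0}, which records the second time integral and second application of the multilinear estimate of total degree $2m-3$ in the solutions.
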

\begin{proof}
If $u_{j,n}$ is a smooth (in space) solution to \eqref{Duhamel} approximating $u_j$ in $C([0;T_0);H^s(\T^d))\cap\Fb^{s,\nu}_{T_0}\cap \Eb^s_{T_0}$, $j=1,2$, then $v_n:= u_{1,n}-u_{2,n}$ approximates $v=u_1-u_2$ in $C([0;T_0);H^s(\T^d))\cap\Fb^{s,\nu}_{T_0}\cap \Eb^s_{T_0}$. Moreover, repeating the proof of \eqref{actions} in Lemma~\ref{LEM:energy},
we find for any dyadic $N_0$ that
\begin{align}\label{energydyadic}
&\|\P_{N_0}v_n(t)\|_{L^2}^2\notag\\
&=\|\P_{N_0}v_n(0)\|_{L^2}^2+2\Re\Big\{\int_0^t \int_{\T^d}\P_{N_0}\big(\NN_0(u_{1,n})-\NN_0(u_{2,n})\big)(\tau)\cj{\P_{N_0}v_n}(\tau)dxd\tau\notag\\
&\qquad+\int_0^t \sum_{|k_0|\sim N_0}\ft\RR(k_0)\big(|(\ft u_{1,n})_{k_0}|^{m-1}(\ft u_{1,n})_{k_0}-|(\ft u_{2,n})_{k_0}|^{m-1}(\ft u_{2,n})_{k_0}\big)(\tau)\cj{\ft (v_n)}_{k_0}(\tau)d\tau\Big\}\notag\\
&=\|\P_{N_0}v_n(0)\|_{L^2}^2+2\Re\Big\{\int_0^t \int_{\T^d}\P_{N_0}\big(\NN_0(u_{1,n})-\NN_0(u_{2,n})\big)(\tau)\cj{\P_{N_0}v_n}(\tau)dxd\tau\notag\\
&\qquad+\int_0^t \sum_{|k_0|\sim N_0}\ft\RR(k_0)\big(|(\ft u_{1,n})_{k_0}|^{2}-|(\ft u_{2,n})_{k_0}|^{2}\big)(\tau)\notag\\
&\qquad\qquad\times\sum_{m'=0}^{\frac{m-3}2}|(\ft u_{1,n})_{k_0}(\tau)|^{2m'}|(\ft u_{2,n})_{k_0}(\tau)|^{m-3-2m'}(\ft u_{2,n})_{k_0}(\tau)\cj{\ft (v_n)_{k_0}}(\tau)d\tau\Big\}
\end{align}
where we used the last condition on $\RR$ in \ref{A5*}.

Similarly as in Lemma~\ref{LEM:energy}, we also have
\begin{align}\label{actions2}
|(\ft u_{j,n})_{k_0}(\tau)|^{2}=|(\ft u_{j,n})_{k_0}(0)|^{2}+2\Re\Big\{\int_0^\tau\widehat{\NN_0(u_{j,n})}_{k_0}(\tau')\cj{(\ft u_{j,n})_{k_0}}(\tau')d\tau'\Big\},~~j=1,2.
\end{align}

From the convergence of $u_{j,n}$ to $u_j$ in $C([0;T_0);H^s(\T^d))\cap \Eb^s_{T_0}\cap \Fb^{s,\nu}_{T_0}$, we can pass to the limit in \eqref{energydyadic} and \eqref{actions2} using estimates similar to the ones in Lemma~\ref{LEM:N}, providing the energy identities
\begin{align}\label{identity1}
&\|\P_{N_0}v(t)\|_{L^2}^2\notag\\
&=\|\P_{N_0}v(0)\|_{L^2}^2+2\Re\Big\{\int_0^t \int_{\T^d}\P_{N_0}\big(\NN_0(u_{1})-\NN_0(u_{2})\big)(\tau)\cj{\P_{N_0}v}(\tau)dxd\tau\notag\\
&\qquad+\int_0^t \sum_{|k_0|\sim N_0}\ft\RR(k_0)\big(|(\ft u_{1})_{k_0}|^{2}-|(\ft u_{2})_{k_0}|^{2}\big)(\tau)\notag\\
&\qquad\qquad\times\sum_{m'=0}^{\frac{m-3}2}|(\ft u_{1})_{k_0}(\tau)|^{2m'}|(\ft u_{2})_{k_0}(\tau)|^{m-3-2m'}(\ft u_{2})_{k_0}(\tau)\cj{\ft (v)_{k_0}}(\tau)d\tau\Big\}
\end{align}
and
\begin{align}\label{identity2}
|(\ft u_{j})_{k_0}(\tau)|^{2}=|(\ft u_{j})_{k_0}(0)|^{2}+2\Re\Big\{\int_0^\tau\widehat{\NN_0(u_{j})}_{k_0}(\tau')\cj{(\ft u_{j})_{k_0}}(\tau')d\tau'\Big\},~~j=1,2.
\end{align}

Since it also holds $|(\ft u_1)_{k_0}(0)|^{2}=|(\ft u_2)_{k_0}(0)|^{2}$, we can inject \eqref{identity2} into \eqref{identity1} to conclude that
\begin{align*}
&\|v(t)\|_{\Eb^s_T}^2\\
&\le 2\sum_{N_0}N_0^{2s}\sup_{t\in[0;T]}\Big|\int_0^t \int_{\T^d}\P_{N_0}\big(\NN_0(u_1)-\NN_0(u_2)\big)(\tau)\cj{\P_{N_0}v}(\tau)dxd\tau\Big|\\
&\qquad+\sum_{N_0}N_0^{2s}\sup_{t\in[0;T]}\Big|\int_0^t \sum_{|k_0|\sim N_0}\ft\RR(k_0)\\
&\qquad\qquad\qquad\times\Re\Big(\int_0^\tau\big(\widehat{\NN_0(u_1)}_{k_0}(\tau')\cj{(\ft u_1)_{k_0}}(\tau')-\widehat{\NN_0(u_2)}_{k_0}(\tau')\cj{(\ft u_2)_{k_0}}(\tau')\big)d\tau'\Big)\\
&\qquad\qquad\qquad\times\sum_{m'=0}^{\frac{m-3}2}|(\ft u_1)_{k_0}(\tau)|^{2m'}|(\ft u_2)_{k_0}(\tau)|^{m-3-2m'}(\ft u_2)_{k_0}(\tau)\cj{\ft v_{k_0}}(\tau)d\tau\Big|.
\end{align*}
Proceeding as in Lemma~\ref{LEM:energy}, we can estimate the contribution of the first time integral by
\begin{multline}\label{Ediff01}
\sum_{N_0}N_0^{2s}\sup_{t\in[0;T]}\Big|\int_0^t \int_{\T^d}\P_{N_0}\big(\NN_0(u_1)-\NN_0(u_2)\big)(\tau)\cj{\P_{N_0}v}(\tau)dxd\tau\Big|^2\\\les T\|v\|_{\Fb^{s,\nu}_T}^2\big(\|u_1\|_{\Fb^{s,\nu}_T}+\|u_2\|_{\Fb^{s,\nu}_T}\big)^{m}
\end{multline}
provided again that
\begin{align*}
\begin{cases}
\rho> \alpha_1^{-1}\big(\beta_0+\beta_1^*+(1-\gamma)\nu),\\
\rho> \alpha_1^{-1}\big(\beta_1^*+\beta_2^*-2s+(1-\gamma)\nu),\\
{\displaystyle \rho>(\alpha_1+\alpha_2)^{-1}\big(\beta_1^*+\beta_2^*-2s+\sum_{\ell=2}^{L}(\beta_{\ell+1}^*+\frac{d}2-s)\big)+(1-\gamma)\nu,}\\
\qquad\qquad\qquad\qquad \text{ for any } 2\le \ell_0\le m\text{ and }2\le L\le \ell_0-1,\\
{\displaystyle \rho>(\alpha_1+\alpha_2)^{-1}\big(\beta_1^*+\beta_2^*+\beta_0+\frac{d}2-s+\sum_{\ell=2}^{L}(\beta_{\ell+1}^*+\frac{d}2-s)\big)+(1-\gamma)\nu},\\
\qquad\qquad\qquad\qquad\text{ for any }2\le \ell_0\le m\text{ and }\ell_0+1\le L\le m.
\end{cases}
\end{align*}
%\begin{align*}
%\rho\ge \alpha_1^{-1}\big(\wt\beta_0^*+\wt\beta_1^*\big)\qquad\text{and}\qquad \rho>(\alpha_1+\alpha_2)^{-1}\big(\wt\beta_0^*+\wt\beta_1^*+\sum_{j\ge 2}\max\big(\wt\beta_j^*+\frac{d}2-s;0\big)\big),
%\end{align*}
%and
%\begin{align*}
%\rho>\alpha_1^{-1}\big(\wt\beta_0^*+\wt\beta_1^*-2s\big)\qquad\text{and}\qquad \rho>(\alpha_1+\alpha_2)^{-1}\big(\wt\beta_0^*+\wt\beta_1^*+\sum_{j\ge 2}\max\big(\wt\beta_j^*+\frac{d}2-s;0\big)\big),
%\end{align*}
%with $\wt\beta=B_{(1-\gamma)\nu}A_\nu (\beta)$.

As for the contribution of the second time integral, we can proceed again as in the proof of Lemma~\ref{LEM:energy}, saving $(N_0\wedge N_2^*)^{-\frac{d}2}$ derivatives in the $\tau'$ integral thanks to the resonant structure\footnote{we estimate the $\ell^\infty_{k_0}$ norm instead of the $\ell^2_{k_0}$ one.}:
\begin{align*}
&\sum_{N_0}N_0^{2s}\sup_{t\in[0;T]}\Big|\int_0^t \sum_{|k_0|\sim N_0}\ft\RR(k_0)2\Re\Big(\int_0^\tau\big(\widehat{\NN_0(u_1)}_{k_0}(\tau')\cj{(\ft u_1)_{k_0}}(\tau')-\widehat{\NN_0(u_2)}_{k_0}(\tau')\cj{(\ft u_2)_{k_0}}(\tau')\big)d\tau'\Big)\Big|\\
&\qquad\qquad\times\sum_{m'=0}^{\frac{m-3}2}|(\ft u_1)_{k_0}(\tau)|^{2m'}|(\ft u_2)_{k_0}(\tau)|^{m-3-2m'}(\ft u_2)_{k_0}(\tau)\cj{\ft v_{k_0}}(\tau)d\tau\\
&\les TN_0^{2s+|\beta_\RR|+\nu}\sup_{\substack{I_{N_0}\subset[0;T)\\|I_{N_0}|=N_0^{-\nu}\wedge T}}\|\P_{N_0}v\|_{U^2_{W\L}(I_{N_0})L^2}\big(\|\P_{N_0}u_1\|_{U^2_{W\L}(I_{N_0})L^2}+\|\P_{N_0}u_2\|_{U^2_{W\L}(I_{N_0})L^2}\big)^{m-3}\\
&\qquad\qquad\times\sup_{\tau\in I_{N_0}}\sup_{|k_0|\sim N_0}\Big|\int_0^\tau\big(\widehat{\NN_0(u_1)}_{k_0}(\tau')\cj{(\ft u_1)_{k_0}}(\tau')-\widehat{\NN_0(u_2)}_{k_0}(\tau')\cj{(\ft u_2)_{k_0}}(\tau')\big)d\tau'\Big|\\
&\les T\sum_{N_0}N_0^{2s+|\beta_\RR|+\nu}\sup_{\substack{I_{N_0}\subset[0;T)\\|I_{N_0}|=N_0^{-\nu}\wedge T}}\|\P_{N_0}v\|_{U^2_{W\L}(I_{N_0})L^2}\big(\|\P_{N_0}u_1\|_{U^2_{W\L}(I_{N_0})L^2}+\|\P_{N_0}u_2\|_{U^2_{W\L}(I_{N_0})L^2}\big)^{m-3}\\
&\qquad\qquad\times\sup_{\tau\in I_{N_0}}\sum_{\substack{N_1,\dots,N_m\\N_0^*\sim N_1^*}}\sup_{\substack{J_{N_0^*}\subset[0;\tau)\\|J_{N_0^*}|=(N_0^*)^{-\nu}\wedge \tau}}\tau (N_0^*)^{(1-\gamma)\nu} (N_0\wedge N_2^*)^{-\frac{d}2}(N_0^*)^{\beta_{j_0}+\beta_{j_1}-\alpha_1\rho}(N_2^*)^{\beta_{j_2}-\alpha_2\rho+\frac{d}2}\\
&\qquad\times\Big(\prod_{\ell=3}^m(N_{j_\ell})^{\beta_{j_\ell}+\frac{d}2}\Big)\sum_{j=0}^m\|\P_{N_j}v\|_{U^2_{W\L}(J_{N_0^*})L^2}\prod_{\substack{j'=0\\j'\neq j}}^m\big(\|\P_{N_{j'}}u_1\|_{U^2_{W\L}(J_{N_0^*})L^2}+\|\P_{N_{j'}}u_2\|_{U^2_{W\L}(J_{N_0^*})L^2}\big).
\end{align*}
After summing on the $N_j$ as in the proof of Lemma~\ref{LEM:N}, we finally find the estimate for the second time integral as
\begin{align}\label{Ediff02}
&\sum_{N_0}N_0^{2s}\sup_{t\in[0;T]}\Big|\int_0^t \sum_{|k_0|\sim N_0}\ft\RR(k_0)2\Re\Big(\int_0^\tau\big(\widehat{\NN_0(u_1)}_{k_0}(\tau')\cj{(\ft u_1)_{k_0}}(\tau')-\widehat{\NN_0(u_2)}_{k_0}(\tau')\cj{(\ft u_2)_{k_0}}(\tau')\big)d\tau'\Big)\notag\\
&\qquad\qquad\times\sum_{m'=0}^{\frac{m-3}2}|(\ft u_1)_{k_0}(\tau)|^{2m'}|(\ft u_2)_{k_0}(\tau)|^{m-3-2m'}(\ft u_2)_{k_0}(\tau)\cj{\ft v_{k_0}}(\tau)d\tau\Big|\notag\\
&\qquad\qquad\qquad\qquad\les T^2\|v\|_{\Fb^{s,\nu}_T}^2\big(\|u_1\|_{\Fb^{s,\nu}_T}+\|u_2\|_{\Fb^{s,\nu}_T}\big)^{2m-3},
\end{align}
provided now that
\begin{align*}
\begin{cases}
\rho\ge  \alpha_1^{-1}\big(\beta_0+\beta_1^*+|\beta_\RR|+(2-\gamma)\nu\big),\\
\rho\ge  \alpha_1^{-1}\big(\beta_1^*+\beta_2^*-2s+(1-\gamma)\nu),\\
{\displaystyle \rho>(\alpha_1+\alpha_2)^{-1}\big(\beta_1^*+\beta_2^*-2s+\sum_{\ell=2}^{L}(\beta_{\ell+1}^*+\frac{d}2-s)+(1-\gamma)\nu\big),}\\
\qquad\qquad\qquad\qquad \text{ for any } 2\le \ell_0\le m\text{ and }2\le L\le \ell_0-1,\\
{\displaystyle \rho>(\alpha_1+\alpha_2)^{-1}\big(\beta_1^*+\beta_2^*+\beta_0+|\beta_\RR|-s+\sum_{\ell=2}^{L}(\beta_{\ell+1}^*+\frac{d}2-s)+(2-\gamma)\nu\big)},\\
\qquad\qquad\qquad\qquad\text{ for any }2\le \ell_0\le m\text{ and }\ell_0+1\le L\le m,
\end{cases}
\end{align*}
Combining \eqref{Ediff01} and \eqref{Ediff02} finally proves \eqref{Ediff0}.
\end{proof}
\begin{proof}[Proof of Theorem~\ref{THM:LWP-R}~(iii)]
With the energy estimate of Lemma~\ref{LEM:Ediff0}, we can now show the uniqueness of solutions claimed in Theorem~\ref{THM:LWP-R}~(iii). Indeed, if there are two solutions $u_1,u_2\in C([0;T_0);H^s(\T^d))\cap \Eb^s_{T_0}\cap \Fb^{s,\nu}_{T_0}$ sharing the same initial datum $u_0\in H^s(\T^d)$, letting $$R=\max(\|u_1\|_{\Fb^{s,\nu}_{T_0}};\|u_2\|_{\Fb^{s,\nu}_{T_0}}),$$
 applying \eqref{linear} and Lemmas~\ref{LEM:N} and~\ref{LEM:Ediff0}, we find that for $T=\min(A^{-1}\jb{R}^{-\eta};T_0)$ for $\eta=\max(\frac{m}2;m-3;\frac{m-1}{\theta})$ and $A\gg 1$, similarly as in \eqref{apriori1} it holds
\begin{align*}
\|u_1-u_2\|_{\Fb^{s,\nu}_T} &\les \|u_1-u_2\|_{\Eb^s_T} + \|\NN(u_1)-\NN(u_2)\|_{\Nb^{s,\nu}_T}\\
&\le CT\|u_1-u_2\|_{\Fb^{s,\nu}_T}R^{\frac{m}2}\big(1+TR^{m-3}\big)^\frac12 + CT^\theta R^{m-1}\|u_1-u_2\|_{\Fb^{s,\nu}_T}\\
& \le \frac12\|u_1-u_2\|_{\Fb^{s,\nu}_T}.
\end{align*}
This shows that $u_1=u_2$ on $[0;T]$. Since our choice of $T$ only depends on $\|u_1\|_{\Fb^{s,\nu}_{T_0}}$ and $\|u_2\|_{\Fb^{s,\nu}_{T_0}}$, we can iterate this argument on $[T;2T]$, than $[2T;3T]$ etc. as long as $kT\le T_0$. This finally yields $u_1=u_2$ on $[0;T_0]$.
\end{proof}

\begin{proof}[~End of the proof of Theorem~\ref{THM:LWP-R}]
With the existence and uniqueness statements of Theorem~\ref{THM:LWP-R}~(ii) and~(iii), we can now prove the continuous dependence in the initial datum by repeating the argument in the previous subsection leading to the existence part of Theorem~\ref{THM:LWP-R}.

Indeed, if $u_0\in H^s(\T^d)$, let $u_{0,n}\in H^s(\T^d)$ be such that $$u_{0,n}\to u_0\text{ in }H^s(\T^d).$$
Let then $u\in C([0;T_0);H^s(\T^d))\cap \Eb^s_{T_0}\cap \Fb^{s,\nu}_{T_0}$ be the unique solution associated with $u_0$ constructed previously, with $T_0$ as in \eqref{T0}. Without loss of generality, we can again assume that $\|u_{0,n}\|_{H^s}\le \|u_0\|_{H^s}+1$, and up to taking $T_0$ slightly smaller in \eqref{T0} if necessary, we have that the unique solution $u_n\in C([0;T_0);H^s(\T^d))\cap \Eb^s_{T_0}\cap \Fb^{s,\nu}_{T_0}$ starting from $u_{0,n}$ is also defined on $[0;T_0)$. Then, from the compactness of $\{u_{0,n}\}_n\cup \{u_0\}$ in $H^s(\T^d)$, for all $\eps>0$, there is $N_0\in\N$ such that 
$$\|\P_{>N_0}u_0\|_{H^s}+\sup_n\|\P_{>N_0}u_{0,n}\|_{H^s}<\eps.$$
Then proceeding as in the proof of the existence, we find again from the energy estimate of Lemma~\ref{LEM:energy} together with Remark~\ref{REM:smoothing} that for any $T<T_0$,
\begin{align}\label{tail2}
\|\P_{>N_0}u_n\|_{C_TH^s}^2\les \|\P_{>N_0}u_n\|_{\Eb^s_T}\les \|\P_{N_0}u_{0,n}\|_{H^s}+C(\|u_0\|_{H^s})N^{-2\delta} \les \eps,
\end{align}
eventually taking $N_0$ larger if necessary. Repeating the argument in the proof of existence, we deduce that $\{u_n\}_n$ is precompact in $C([0;T];H^s(\T^d))$, thus having a subsequence converging in $C([0;T];H^s(\T^d))$. Again, from the tail estimate \eqref{tail2}, this convergence is upgraded in $\Eb^s_T$, and using again Lemmas~\ref{LEM:FNE} and~\ref{LEM:N} we deduce that it also converges in $\Fb^s_T$. From the uniqueness statement already established, we get that the limit of the subsequence must be $u$, and that the whole sequence converges. This concludes the proof of Theorem~\ref{THM:LWP-R}~(iv).

%As for the globalization in Theorem~\ref{THM:LWP-R}~(v), with the assumptions \ref{A2} and \ref{Arho}$_{\alpha,\beta,0}$, local well-posedness in $L^2(\T^d)$ is established by the previous arguments, with a local time of existence only depending on $\|u_0\|_{L^2}$. Since the $L^2$ norm is a conserved quantity under \ref{A2}, we can iterate the local well-posedness indefinitely to obtain existence and uniqueness on arbitrary time intervals.
\end{proof}

%\section{Almost sure globalization and invariance of the white noise}
%
%+Proof of Example~\ref{EX:global}
%
%\appendix

\end{document}